\crefname{ineq}{Inequality}{inequalities}
\theoremstyle{theorem}
\newtheorem{theorem}{Theorem}[section]
\newtheorem{lemma}[theorem]{Lemma}
\newtheorem{proposition}[theorem]{Proposition}
\newtheorem{corollary}[theorem]{Corollary}
\theoremstyle{definition}
\newtheorem{example}[theorem]{Example}
\newtheorem{definition}[theorem]{Definition}
\newtheorem{remarklabeled}[theorem]{Remark}
\newtheorem{convention}[theorem]{Assumption}
\numberwithin{equation}{section}
\DeclareMathOperator{\GL}{GL}
\DeclareMathOperator{\SL}{SL}
\DeclareMathOperator{\Sp}{Sp}
\DeclareMathOperator{\tr}{tr}
\DeclareMathOperator{\id}{id}
\DeclareMathOperator{\fin}{fin}
\DeclareMathOperator{\Supp}{Supp}
\DeclareMathOperator{\Ad}{Ad}
\DeclareMathOperator{\reg}{reg}
\DeclareMathOperator{\Spec}{Spec}
\DeclareMathOperator{\Hom}{Hom}
\DeclareMathOperator{\rank}{rank}
\newcommand{\inverse}{^{-1}}
\newcommand{\gitquo}{/\!/}
\newcommand{\hamiltonianreduction}{/\!/\!/}
\newcommand{\M}{\mathcal{M}}
\newcommand{\g}{\mathfrak{g}}
\newcommand{\CC}{\mathbb{C}}
\newcommand{\ZZ}{\mathbb{Z}}
\newcommand{\PP}{\mathbb{P}}
\newcommand{\QQ}{\mathbb{Q}}
\newcommand{\mf}{\mathfrak}
\newcommand{\sheafO}{\mathcal{O}}
\newcommand{\cartanh}{\mathfrak{h}}
\newcommand{\mc}{\mathcal}
\newcommand{\gl}{\mathfrak{gl}}
\renewcommand{\sp}{\mathfrak{sp}}
\renewcommand{\sl}{\mathfrak{sl}}
\newcommand{\<}{\langle}
\renewcommand{\>}{\rangle}
\renewcommand{\L}{\mathcal{L}}
\title[Namikawa-Weyl groups of quiver varieties]{Namikawa-Weyl groups of affinizations of smooth Nakajima quiver varieties}
\author{Yaochen Wu}
\address{Department of Mathematics\\
	Yale University\\
New Haven, CT, 06511}
\email{yaochen.wu@yale.edu}
\begin{document}
	
	\begin{abstract}
		We give a description of the Namikawa-Weyl group of affinizations of smooth Nakajima quiver varieties based on combinatorial data of the underlying quiver, and compute some explicit examples. This extends a result of McGerty and Nevins for quiver varieties associated to Dynkin quivers. 
	\end{abstract}
\maketitle
	\setcounter{tocdepth}{1}
	
	\tableofcontents
	\section{Introduction}
	\subsection{Poisson deformations of conical symplectic singularity}
	We start by recalling the notion of conical symplectic singularity, first defined in \cite{beauville2000symplectic}, and their deformations.  
	Let $X$ be a normal affine Poisson variety. 
	\begin{definition}\label{conical symplectic singularities}
		We say $X$ \textit{has symplectic singularities} if its smooth locus $X^{\reg}$ is a symplectic variety with symplectic form $\omega$ and there is a projective resolution of singularities $\rho: Y \to X$ such that $\rho^* \omega$ extends to a regular 2-form on $Y$. 
		We say $X$ is \textit{conical} if there is a $\CC^*$-action on $X$ that contracts $X$ to a point, and $\omega$ has positive weight under this action. We say $\rho$ is a \textit{symplectic resolution of singularities} if $\rho^* \omega$ extends to a symplectic form on $Y$. 
	\end{definition}
	\begin{definition}\label{definition graded poisson deformation}
		Let $X$ be a normal affine Poisson variety equipped with a contracting $\CC^*$-action. A \textit{graded Poisson deformation} of $X$ is the data $(\mc{X}, B, j)$, where: 
		\begin{enumerate}
			\item $B = \bigoplus_{i\ge 0} B_i$ is a finitely generated positively graded $\CC$-algebra, such that $B_0 = \CC$. 
			\item $\mc{X}$ is an affine Poisson variety equipped with a $\CC^*$-action over $\Spec(B)$, and the structure morphism $\pi: \mc{X}\to \Spec(B)$ is $\CC^*$-equivariant and flat. 
			\item $j: X\xrightarrow{\sim} \pi\inverse(0)$ is a $\CC^*$-equivariant Poisson isomorphism, where $0\in \Spec(B)$ corresponds to the maximal ideal $\bigoplus_{i>0} B_i$. 
		\end{enumerate}
	\end{definition}
	Let $X$ be as in \Cref{definition graded poisson deformation} and $(\mc{X},B, j), (\mc{X}',B', j')$ be two graded Poisson deformations. A \textit{morphism} of graded Poisson deformations from $(\mc{X},B , j)$ to $ (\mc{X}',B', j')$ consists of $\CC^*$-equivariant morphisms $\Phi:\mc{X}\to \mc{X}'$ and  $f: \Spec(B)\to \Spec(B')$, such that the following diagram is Cartesian
	\[
	\begin{tikzcd}
		\mc{X}\arrow[r,"\Phi"] \arrow[d,"\pi"] & \mc{X}' \arrow[d,"\pi'"]\\
		\Spec(B)\arrow[r,"f"] &  \Spec(B')
	\end{tikzcd}
	\]
	and moreover, $j' = \Phi|_{\pi\inverse(0)} \circ j$. 
	
	We say a graded Poisson deformation $(\mc{X} ,B, j)$ is \textit{universal} if for any graded Poisson deformation $(\mc{X}',B', j')$, there is a unique morphism of graded Poisson deformations from $(\mc{X}',B', j')$ to $(\mc{X} ,B, j)$.

	Let $X$ be a conical symplectic variety, $\rho: Y\to X$ be a symplectic resolution of singularities. 
	\begin{theorem}{\label{Namikawa diagram of universal deformation }} {\cite[Theorem 5.5]{namikawa2011poisson}}
		There is a commutative diagram 
		\begin{equation}{\label{diagram of univ poisson defor of symplectic resol}}
			\begin{tikzcd}
				\mc{Y} \arrow[r] \arrow[d,"\pi_Y"] & \mc{X} \arrow[d,"\pi_X"]\\
				H^2(Y,\CC) \arrow[r,"q"]& \mathbb{A}^d
			\end{tikzcd}
		\end{equation}
		where $d= \dim H^2(Y,\CC)$, $\pi_X$, $\pi_Y$ are universal graded Poisson deformations of $X$ and $Y$ respectively, with $\pi_X\inverse(0) = X, \pi_Y\inverse(0) = Y$.

	\end{theorem}
	We write $\cartanh_X: = H^2(Y,\CC)$, which is known to depend only on $X$. Let $\mc{L}_1,...,\mc{L}_n$ be the codimension 2 symplectic leaves of $X$. 
	The formal slice $S_i$ to $\mc{L}_i$ is a Kleinian singularity, of type $A, D$ or $E$. Let $\widehat{W}_i$ be the corresponding Weyl group and $\hat{\cartanh}_i^*$ be the root space. The fundamental group $\pi_1(\mc{L}_i)$ acts on $\widehat{W}_i$ and $\hat{\cartanh}_i^*$ by Dynkin diagram automorphisms. 
	Define $W_i := (\widehat{W}_i)^{\pi_1(\mc{L}_i)}$ and $\cartanh_i := (\hat{\cartanh}_i^*)^{\pi_1(\mc{L}_i)}$. They are the Weyl groups and Cartan space corresponding to the folded Dynkin diagram by the automorphisms $\pi_1(\mc{L}_i)$. 
	
	\begin{definition}
		\begin{enumerate}
			\item The vector space $\cartanh_X$ is called the \textit{Namikawa-Cartan space} of $X$.
			\item The direct product
			\[W = \prod_i W_i,\]
			where $i$ runs over all the codimension 2 symplectic leaves of $X$, is called the \textit{Namikawa-Weyl group} of $X$. 
		\end{enumerate}
	\end{definition}
	
	The Namikawa-Weyl group is important for the following reason. 
	\begin{theorem}[{\cite[Theorem 1.1]{namikawa2010poisson}}]{\label{Namikawa Weyl group}}
		The map $q$ in \eqref{diagram of univ poisson defor of symplectic resol} is the quotient map of the action of $W$. 
	\end{theorem}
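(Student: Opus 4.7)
The plan is to analyze the finite map $q$ by reducing to a local problem around each codimension $2$ leaf and invoking the classical Brieskorn--Slodowy theory for deformations of Kleinian singularities together with folding by diagram automorphisms.

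First I would pin down the $W$-action on $\cartanh_X = H^2(Y,\CC)$. Using \Cref{Ivan's decomposition of cartan space}, each $W_i$ acts on the summand $\mf{h}_i^*$ as the Weyl group of the Dynkin diagram folded by $\pi_1(\mc{L}_i)$ and trivially on the remaining summands, giving a well-defined action of $W = \prod_i W_i$. I would then argue that $q$ is $W$-invariant and that the induced map $\cartanh_X / W \to \mathbb{A}^d$ is an isomorphism; for this one uses Namikawa's result that the base $\mathbb{A}^d$ is smooth of dimension $d$.

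For $W$-invariance and identification of the ramification, I would work formally around a generic point $x \in \mc{L}_i$. By Kaledin's formal slice decomposition, $X^{\wedge_x}$ is isomorphic to $(\mc{L}_i \times \Sigma_i)^{\wedge}$ with $\Sigma_i$ a Kleinian singularity, and the symplectic resolution $Y \to X$ pulls back to the minimal resolution $\widetilde{\Sigma}_i \to \Sigma_i$ (times $\mc{L}_i$). By Brieskorn--Slodowy, the universal graded Poisson deformation of $\Sigma_i$ is $\hat{\mf{h}}_i / \widehat W_i$, and of its resolution is $\hat{\mf{h}}_i \to \hat{\mf{h}}_i/\widehat W_i$. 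The monodromy around $\mc{L}_i$ produces a $\pi_1(\mc{L}_i)$-action on $\widehat{\Sigma}_i$ that permutes exceptional divisors; taking $\pi_1(\mc{L}_i)$-invariants yields a local deformation with base $\mf{h}_i / W_i$. Comparing with the restriction of the universal deformations $\pi_X,\pi_Y$ to a formal neighborhood identifies the local contribution to $q$ with the quotient map $\mf{h}_i \to \mf{h}_i/W_i$, which realizes $W_i$ as a reflection group acting on the $\mf{h}_i$-summand.

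Next I would globalize. The $W$-invariance of $q$ follows by combining the local computations over all codimension $2$ leaves, together with the fact that reflection generators from distinct leaves act on disjoint summands in \Cref{Ivan's decomposition of cartan space}. To conclude that $q$ is exactly the quotient map, I would show $q$ is finite of degree $|W| = \prod_i |W_i|$ and that its ramification locus consists precisely of the reflection hyperplanes for $W$, invoking purity of the branch locus (Zariski--Nagata) on the smooth target $\mathbb{A}^d$ so that no extra identifications occur in codimension $\ge 2$.

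The main obstacle is the globalization: ensuring that the reflection generators produced locally at different leaves assemble into a genuine $W$-action on $\cartanh_X$ compatible with the deformation $\mc{Y}\to\cartanh_X$, and that $q$ has no hidden ramification away from the reflection hyperplanes. This requires both the universality of $\pi_Y$ and control of the monodromy of the exceptional divisors as $x$ varies in $\mc{L}_i$, which is where Namikawa's theorem on smoothness of the base and the identification of $\cartanh_X$ with $H^2(Y,\CC)$ plays an essential role.
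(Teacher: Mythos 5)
This statement is quoted verbatim from Namikawa (\cite[Theorem 1.1]{namikawa2010poisson}); the paper offers no proof of it, so there is no internal argument to compare against. Your outline does follow the same broad route as Namikawa's original proof: formal localization at a generic point of each codimension $2$ leaf, Brieskorn--Slodowy theory for the Kleinian slice $\Sigma_i$, the $\pi_1(\mc{L}_i)$-folding giving $W_i=\widehat{W}_i^{\pi_1(\mc{L}_i)}$, and then a global finiteness argument over the smooth base $\mathbb{A}^d$.

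As a standalone proof, however, the globalization step is asserted rather than established, and that is precisely where the real content lies. First, you impose the $W$-action on $\cartanh_X$ a priori through the direct sum decomposition of \Cref{Ivan's decomposition of cartan space} and then try to check $W$-invariance of $q$; but that decomposition is only a vector space isomorphism, and the compatibility of the locally defined reflections with the universal deformation $\pi_Y$ (equivalently, that the local $W_i$ really act on $H^2(Y,\CC)$ fixing the other summands and commuting with the classifying map) is part of what must be proven. In Namikawa's argument the $W$-action is not imposed but produced: one shows the natural map of Poisson deformation functors $\mathrm{PD}_Y\to\mathrm{PD}_X$ is a finite covering, identifies its local monodromy over a generic point of each branch divisor with $W_i$ via the slice computation, and then shows the covering is Galois with group generated by these local monodromies. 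Second, your appeal to purity of the branch locus plus a degree count does not close the argument: purity only says the branch locus is divisorial, and you never explain how to compute $\deg q=|W|$ or why the monodromy group of $q$ over the complement of the discriminant is not larger than the group generated by the local reflections. Without identifying the global monodromy (Galois) group, the factorization $\cartanh_X\to\cartanh_X/W\to\mathbb{A}^d$ with the second map birational, hence an isomorphism by normality, cannot be concluded. These are genuine gaps, not bookkeeping; filling them is essentially reproving Namikawa's theorem, which is why the paper simply cites it.
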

	
	The Namikawa-Cartan space $\cartanh_X$ has the following decomposition. 
	\begin{theorem}[{\cite[Lemma 2.8]{losev2022deformations}}]{\label{Ivan's decomposition of cartan space}} There is a vector space isomorphism $H^2(Y,\CC) = H^2(X^{\reg},\CC) \oplus \bigoplus_i \mf{h}_i$, where $i$ run through the codimension 2 leaves of $X$. 
	\end{theorem}

	\begin{remarklabeled}\label{remark how to project to cartanh i}
		Let us explain the projection $H^2(Y,\CC) \twoheadrightarrow \cartanh_i$, following \cite{namikawa2011poisson}. Let $\L_i$ be a codimension 2 symplectic leaf of $X$, and let $x\in \L_i$. Then, there is an analytic neighbourhood $U$ of $x$ in $X$, such that
		\begin{enumerate}
			\item there is a Poisson isomorphism $U \cong S_i \times \Delta^{\dim X -2}$, where $ \Delta^{\dim X -2}$ is the complex polydisc of dimension $\dim X -2$;
			\item $\rho\inverse(U) \cong \tilde{S}_i \times \Delta^{\dim X -2} $, where $\tilde{S}_i$ is the minimal resolution of the Kleinian singularity $S_i$. 
		\end{enumerate} 
		Take a class $\alpha\in H^2(Y,\CC)$, restrict it to $\rho\inverse(U)$, and we get a class $[\alpha]|_i \in H^2(\tilde{S_i},\CC)$. The latter is isomorphic to the root space $\hat{\cartanh}_i^*$. Namikawa showed (\cite[Proposition 4.2]{namikawa2011poisson}) $[\alpha]_i$ is invariant under the $\pi_1(\L_i)$ action, i.e. $[\alpha]_i \in \cartanh_i$. This is the projection of $[\alpha]$ to $\cartanh_i$. 
	\end{remarklabeled}

	\subsection{Quiver varieties}
	We recall the notion of quiver varieties, following \cite{nakajima1994instantons}. Let $Q$ be a quiver which may contain edge loops, $Q_0$ be the set of vertices and $Q_1$ the set of arrows. For $a\in Q_1$ let $t(a),h(a)$ denote the tail and head of $a$ respectively. 
	For $i,j\in Q_0$ let $n_{ij}$ denote the number of arrows between $i,j$, regardless of the orientation. 
	To each $i\in Q_0$ we associate a simple root $\alpha_i$. We identify $\ZZ_{\ge 0 }^{Q_0}$ with the root lattice and define the \textit{Tits form} on $\ZZ_{\ge 0}^{Q_0 }$ by 
	\[ \ZZ_{\ge 0}^{Q_0 } \times \ZZ_{\ge 0}^{Q_0} \to \ZZ, ((v_i)_i, (v'_i)_i) =v_i v'_i\sum_{i\in Q_0} (2- n_{ii} )- \sum_{i\neq j} v_iv'_j n_{ij}.\]
	
	Let $W_Q$ be the Weyl group of $Q$ generated by reflections on the root space $\QQ^{Q_0} = \text{Span}_\QQ \{\alpha_i|i\in Q_0\}$ along real simple roots, i.e. 
	\[s_i: \alpha \mapsto \alpha - (\alpha,\alpha_i) \alpha_i\]
	for a vertex $i$ that does not carry an edge loop. 
	
	For any quiver $Q$, let $\overline{Q}$ denote its \textit{double quiver}. By definition, it has the same vertex set as $Q$, and the set of arrows is $\overline{Q}_1 = \{a,a^*| a\in Q_1\}$, where $t(a^*) = h(a), h(a^*) = t(a)$. 
	
	For $v\in \ZZ_{\ge 0}^{Q_0 }$, the following definition will be frequently used: 
	\[p(v): = 1-\frac{1}{2}(v,v). \]
	
	Let $v,w\in\ZZ_{\ge 0}^{Q_0}$; let $V_i,W_i$ be vector spaces with $\dim V_i = v_i$ and $ \dim W_i = w_i$. Define the coframed representation space 
	\begin{equation}{\label{definition of R}}
		R(Q,v,w) := \bigoplus_{a\in Q_1} \Hom (V_{t(a)}, V_{h(a)}) \oplus \bigoplus_{i\in Q_0} \Hom (V_i,W_i).
	\end{equation}
	We omit $w$ when $w=0$ and write $R$ for $R(Q,v,w)$ when $Q,v,w$ is clear from the context. 
	The cotangent bundle $T^*R = R\oplus R^*$ carries a natural symplectic vector space structure; it can also be viewed as the representation space $R(\overline{Q},v,w)$ of the double quiver. 
	The group $G = \GL(v) := \prod_{i\in Q_0} \GL(V_i)$ acts on $R$ naturally; this induces a Hamiltonian $G$-action on $T^*R$. Let $\g := \prod_{i\in Q_0} \gl(v_i)$ be the Lie algebra of $G$. 
	We $G$-equivariantly identify $\g \cong \g^*$ via trace pairing. Then, the moment map $\mu: T^*R \to \g$ for the Hamiltonian $G$-action on $T^*R$ can be written as follows. 
	Let $(x,y,p,q) = (x_a,y_a,p_i,q_i)\in T^*R$ where \[x_a \in \Hom (V_{t(a)}, V_{h(a)}), y_a \in \Hom (V_{h(a)}, V_{t(a)}), q_i \in \Hom (V_i,W_i), p_i\in \Hom(W_i,V_i),\]
	$a\in Q_1, i\in Q_0$.  
	Then  
	\[\mu((x_a,y_a,p_i,q_i)) = [x,y]-pq.\] 
	More precisely, the $i$-component of the right hand side is 
	\[\sum_{h(a) = i} x_a y_a - \sum_{t(a) = i} y_a x_a -p_iq_i .\]
	
	Let $\theta$ be a character of $G$; it has the form $\theta (g) = \prod_{i\in Q_0} \det(g_i)^{\theta_i}$, $g_i \in \GL(v_i)$, $\theta_i \in\ZZ$. We can thus identify $\ZZ^{Q_0}$ and the character lattice of $G$, and write $\theta \in \ZZ^{Q_0}$. We write $\theta > 0$ if $\theta_i > 0$ for all $i\in Q_0$. Let $ (T^*R)^{\theta-ss}$ denote the set of $\theta$-semistable points of $T^*R$. If $\theta_i>0$ for all $i$ then $(x,y,p,q)$ being semistable is equivalent to $\ker q$ having no nonzero subspace stable under $x,y$. 
	
	We write $\mathfrak{p}:= (\g/[\g,\g])^*$, which is identified with $\CC^{Q_0}$. 
	Let $\lambda \in\mathfrak{p}$. 
	The GIT quotient 
	\[\M_\lambda^\theta (Q, v,w) : = (\mu\inverse(\lambda)^{\theta-ss})\gitquo G\] 
	is called the Nakajima quiver variety. It inherits a Poisson variety structure from the symplectic structure of $T^*R$. We omit the letter $Q$ when it is clear from context, and omit the letter $w$ when $w=0$.
	The variety $\M_\lambda^0 (v,w)$ is affine, and we have a projective morphism $\rho: \M_\lambda^\theta (v,w) \to \M_\lambda^0 (v,w)$. 
	
	For $\theta\in\ZZ^{Q_0}, \lambda\in \mathfrak{p}$, the pair $(\theta,\lambda)$ is said to be \textit{generic} if there are no positive roots $v'<v$ of $Q$ such that $\lambda\cdot v'=\theta\cdot v'= 0$. 
	We say $\lambda$ is generic if $(0,\lambda)$ is generic and we say $\theta$ is generic if $(\theta,0)$ is generic.
	When $(\theta,\lambda)$ is generic, $G$ acts freely on $(T^*R)^{\theta-ss}$ (\cite[Section 3.ii]{nakajima1998quiver}), and $\M_\lambda^\theta (v,w)$ is smooth. 
	
	We can also define families of Nakajima quiver varieties, i.e. 
	\[\M_\mathfrak{p}^\theta ( v,w) : = (\mu\inverse(\mathfrak{p})^{\theta-ss})\gitquo G.\]
	It is a scheme over $\mathfrak{p}$, and each $\M_\lambda^\theta (v,w)$ is the fiber over $\lambda\in \mathfrak{p}$. 
	
	Let $\Lambda_{w} = \sum w_i \varpi_i$ where $\varpi_i$ is the fundamental weight corresponding to the simple root $\alpha_i$. We will frequently use the weight $\nu$ defined by 
	\[\nu = \Lambda_w - \sum_{i\in Q_0} v_i \alpha_i.\]

	\begin{convention}
		Throughout the paper, we shall assume $v_i \neq 0$ for all $i\in Q_0$, unless otherwise specified. In fact, if $v_i = 0$ for some $i$, then we can view $\M_\lambda^\theta(Q,v,w)$ as $\M_\lambda^\theta(Q',v,w')$ where $Q'$ is the subquiver of $Q$ obtained by deleting $i$ and all arrows adjacent to it, and $w'$ is the restriction of $w$ to $\ZZ_{\ge 0}^{Q_0 \backslash \{i\}}$. 
	\end{convention}
	\begin{lemma}[{\cite[ Section 2.1.8]{bezrukavnikov2021etingof}}]
		For generic $\theta$, $\CC[\M_\lambda^\theta (v,w)]$ is a finitely generated algebra independent of $\theta$.
	\end{lemma}
	The above lemma enables us to make the following definition. 
	
	\begin{definition}
		For $\lambda\in\mathfrak{p}$, we set $\M_{\lambda} (v,w):= \Spec (\CC[\M_\lambda^\theta (v,w)])$ for generic $\theta$. We say $\M_{\lambda} (v,w)$ is the \textit{affinization} of $\M_\lambda^\theta (v,w)$. 
	\end{definition}
	The proof of {\cite[Proposition 2.3]{bezrukavnikov2021etingof}} generalizes to show that, for generic $\theta$, the natural map 
	$p: \M_\lambda^\theta (v,w) \to \M_\lambda (v,w)$ is a symplectic resolution of singularities. Therefore the variety $\M_0(v,w)$ is a conical symplectic singularity, \Cref{conical symplectic singularities}. The following observation will play an important role. 
	
	\begin{example}{\label{Mp is deformation over M0}}
		The morphism
		\[\M_\mf{p}(v,w):= \Spec\CC[\M_{\mf{p}}^\theta(v,w)] \to \mf{p}\] gives a graded Poisson deformation of $\M_0(v,w)$. 
	\end{example}
	\subsection{Namikawa-Weyl group of quiver varieties}
	
	The goal of this paper is to describe the Namikawa-Weyl group of quiver varieties $\M_0(v,w)$. Let us first record some known special cases. 
	Assume $Q$ is a simply-laced Dynkin quiver and $\nu = \Lambda_w-\sum v_i\alpha_i $ is a dominant weight. In this case it is known that $\M_0(v,w) \cong \M_0^0(v,w)$, see \Cref{for dominant nu and finite or affine quiver affinization = affine}. 
	McGerty and Nevins in \cite{mcgerty2019springer} described the Namikawa-Weyl group of $\M_0^0 (v,w)$ as follows. 
	Let $\Phi$ be the set of roots, $\Phi_\nu =\{\alpha \in \Phi|\<\nu,\alpha\> = 0\} $ and $\Phi_\nu^{\max}$ be the maximal elements of $\Phi_\nu$ with respect to the usual partial order on $\Phi$. 
	
	\begin{proposition}[{\cite[Lemma 5.1]{mcgerty2019springer}}]
		The set of codimension 2 symplectic leaves of $ \M_0^0 (v,w) $ is in bijection with $\Phi_\nu^{\max}$. 
	\end{proposition}
	
	\begin{proposition}[{\cite[Theorem 5.4]{mcgerty2019springer}}]{\label{MN weyl group dynkin quiver}}
		The Namikawa-Weyl group of $ \M_0^0 (v,w) $ is $W_\nu$, where $W_\nu$ is the Weyl group of the sub-root system $\Phi_\nu$ of $\Phi$.
	\end{proposition}
	The group $W_\nu$ coincides with the stabilizer of $\nu$ in $W_Q$. 
	
	The following theorem summarizes the main results of this paper.
	\begin{theorem}\label{summary in introduction}
		The following are true.
		\begin{enumerate}
			\item Weyl groups of type $A_n,B_n,D_n,E_6,E_7,E_8,G_2$ can appear as components of the Namikawa-Weyl group of the affinization of some quiver variety $\M_0(v,w)$. 
			\item Weyl groups of type $C_n,n\ge 3$ and $F_4$ cannot appear as components of the Namikawa-Weyl group of the affinization of any quiver variety $\M_0(v,w)$.
			\item Weyl groups of type $B_n, G_2$ can only appear when the underlying quiver is wild (i.e. not finite type or affine type). 
		\end{enumerate}
	\end{theorem}
	Part (1) follows from \Cref{MN weyl group dynkin quiver}, \Cref{Bk example} and \Cref{Nakajima G2 quiver via isotropic decomposition}. Part (2) is proved in \Cref{F4 cannot appear} and \Cref{Cn cannot appear}. Part (3) is proved in \Cref{multiply laced only for wild quiver}. 
	
	In addition, we give a description of the Namikawa-Weyl group in \Cref{extend mn result to affine type} when the underline quiver is affine type, which turns out to be similar to \Cref{MN weyl group dynkin quiver}. 
	\subsection{Structure of the paper}\label{structure of the paper}
	In \Cref{Section Notations and preliminaries} we present more results on quiver varieties. 
	In \Cref{section Tautological line bundles and the Kirwan}, we introduce tautological line bundles on quiver varieties, and use them to classify all the possible components of the Namikawa-Weyl group of the affinization of a quiver variety. 
	To carry out the methods in \Cref{section Tautological line bundles and the Kirwan}, we must know certain information of codimension 2 leaves of the quiver varieties (more precisely, \Cref{BS isotropic decomposition}), and this is usually difficult. Therefore, we take a different approach. 
	In \Cref{Codimension 2 leaves in deformed quiver varieties}, we examine the existence of codimension 2 symplectic leaves of deformations of $\M_0(v,w)$ along a subgeneric parameter, and fully classify them in \Cref{theorem cod 2 leaves in deformed quiver varietyes}. 
	In \Cref{Codimension 2 leaves in deformed conical symplectic singularity}, we recall general results about the presence of codimension 2 symplectic leaves in deformations of conical symplectic singularities. 
	In \Cref{Application to quiver varieties}, we use the results from the previous sections to compute the Namikawa-Weyl groups of some quiver varieties. 
	
	\subsection*{Acknowledgment} I am deeply grateful to Ivan Losev, without whom this paper would never appear, for suggesting this problem and for many fruitful discussions as well as numerous suggestions to improve the exposition. I would like to thank Travis Schedler for useful discussions, especially on relations between $\M_0$ and $\M_0^0$, and Gwyn Bellamy for pointing me to \cite{bellamy2020birational} which contains a result on Namikawa-Weyl groups of quiver varieties from affine type quivers. 
	I would like to thank Hiraku Nakajima for suggesting \Cref{Nakajima G2 quiver via isotropic decomposition}. 
	I would like to thank Do Kien Hoang and Dmytro Matvieievskyi for inspiring conversations. I would like to thank the anonymous referee for providing many suggestions that help improve the exposition and pointing out several mistakes in the earlier versions of this paper. 

	\section{More on quiver varieties}{\label{Section Notations and preliminaries}}
	In this subsection we present more results on quiver varieties. An important goal is to reduce our problem to \Cref{best convention on tilde v}, which largely simplifies subsequent computations. 
	\subsection{Framed v.s. non-framed quivers}{\label{equivalent description}}
	The following notations are given in \cite[Section 1, Remarks]{crawley2001geometry}. 
	Given a quiver $Q$, let ${Q^\infty}$ be the \textit{extended quiver} with ${Q^\infty_0} = Q_0 \cup \{\infty\}$; the arrows between vertices in $Q_0$ are the same as those in $Q_1$, plus $w_i$ arrows from the vertex $i$ to the vertex $\infty$. We denote the new simple root associated to the vertex $\infty$ by $\alpha_\infty$. 
	
	Given $Q,v,w$, we define the extended dimension vector $\tilde{v}\in \ZZ_{\ge 0}^{{Q^\infty_0}}$ by $\tilde{v}_\infty = 1$ and $\tilde{v}_i = v_i$ for $i\in Q_0$. 
	It is clear that $R(Q,v,w) = R({Q^\infty},\tilde{v})$ as in \eqref{definition of R}, and  $\M^\theta_\lambda(Q,v,w) = \M_{\tilde{\lambda}}^{\tilde{\theta}}({Q^\infty},\tilde{v})$, 
	where $\tilde{\lambda}_\infty = -v\cdot \lambda$, $\tilde{\lambda}_i = \lambda_i$ for $i\in Q_0$; $\tilde{\theta}_\infty = -\theta \cdot v, \tilde{\theta}_i = \theta_i$ for $i \in Q_0$. 
	
	By definition, $\<\nu,v'\> = -(\tilde{v},v')$ for all $v'\in \ZZ^{{Q_0}}$. We will use these two notions interchangeably. 
	
	\subsection{Stratification of affine quiver varieties}
	The quiver variety $\M_\lambda^0(v,w)$, which is isomorphic to $ \M_{\tilde{\lambda}}^0(\tilde{v})$, has a stratification by symplectic leaves, which we describe below.
	Let $x\in\M_\lambda^0(v,w)$ and  $r \in T^*R$ be a representative of $x$ (with closed $G$-orbit). 
	\begin{definition}
		Suppose $r =r_0 \oplus r_1^{\oplus n_1} \oplus ... \oplus r_k^{\oplus n_k}$, 
		where $r_i$'s are pairwise non-isomorphic simple representations of $\overline{Q^\infty}$, and $\dim(r_0)_\infty = 1$. Write $v^i = \dim r_i \in \ZZ_{\ge 0}^{{Q^\infty_0}}$. We say 
		\[\tau = (v^0,1;v^1,n_1;v^2,n_2;...;v^k,n_k)\]
		is the \textit{representation type} of $x$. 
	\end{definition}
	
	\begin{proposition}[{\cite[Section 3.v]{nakajima1998quiver}}]
		The Poisson variety $\M_\lambda^0(v,w)$ has finitely many symplectic leaves. The symplectic leaf containing $x$ consists of all elements that has the same representation type as $x$. 
	\end{proposition}
	Let us discuss what dimension vector $v^i$ can appear in a representation type $\tau$. 
	For $\tilde{\lambda}\in \CC^{{Q^\infty_0}}$, define the \textit{deformed preprojective algebra}
	\[\Pi^{\tilde{\lambda}} = \CC \overline{{Q^\infty}} /(\sum_{a\in {Q^\infty_1}} [a,a^*]- \sum_{i\in {Q^\infty_0}}\tilde{\lambda}_i e_i) \]
	where $\CC \overline{{Q^\infty}}$ is the path algebra of the double quiver of $ {{Q^\infty}}$, and $a^*$ is the opposite arrow to $a$ in the double quiver. For details see \cite[Section 2]{crawley1998noncommutative}. An element in $\mu\inverse(\lambda) \subset T^*R$ is the same as a representation of $\Pi^{\tilde{\lambda}}$. 
	We will frequently use the following results.
	
	\begin{theorem}[{\cite[Theorem 1.2]{crawley2001geometry}}]{\label{CB simple dimension criteria}}
		For $v\in \ZZ_{\ge 0}^{Q_0^\infty}$, the following conditions are equivalent.
		\begin{enumerate}
			\item There is a simple representation of $\Pi^{\tilde{\lambda}}$ (i.e. an element in  $\mu\inverse(\tilde{{\lambda}})$) with dimension vector $v$.
			\item $v$ is a positive root of ${Q^\infty}$, $\tilde{{\lambda}}\cdot v = 0$, and for any decomposition $v = \beta^1+ ... + \beta^n$ where $n\ge 2$ and $\beta^i$ are positive roots of ${Q^\infty}$ such that $\beta^i\cdot \tilde{{\lambda}}=0$, we have $p(v) > \sum_{i=1}^n p(\beta^i)$. 
		\end{enumerate}
	\end{theorem}
	We write $\Sigma_{\tilde{\lambda}}$ for the set of $v$ such that the conditions in \Cref{CB simple dimension criteria} hold. 
	
	\begin{theorem}[{\cite[Theorem 1.3]{crawley2001geometry}}]{\label{CB strata dimension formula}}
		Let $\tau = (v^0,n_0 = 1;v^1,n_1;...;v^k,n_k)$ be a representation type such that $\sum_{i=0}^k n_iv^i = \tilde{v}$.
		The stratum of $\M_{\tilde{{\lambda}}}^0(\tilde{v})$ associated to $\tau$ has dimension
		\[d(\tau) = 2\sum_{i=0}^k p(v^i).\]
	\end{theorem}
	
	\begin{lemma}{\label{Nonempty smooth quiver variety implies root}}
		If $\M_0^\theta(v,w)$ is nonempty for generic $\theta$, then $\tilde{v}$ is a root of $Q^\infty$. In particular, $w\neq 0$. 
	\end{lemma}
	
	\begin{proof}
		When $\lambda$ is generic, the $G$-action on $\mu\inverse(\lambda)$ is free, so $\mu\inverse(\lambda) = \mu\inverse(\lambda)^{\theta-ss}$ and $\M_\lambda^\theta(v,w) \cong \M_\lambda^0 (v,w)$. It is nonempty only if $\tilde{v}$ is a root due to \Cref{CB simple dimension criteria}. 
		Since $\mu$ is flat when restricted to the $\theta$-stable locus of $T^*R$, $\M_0^\theta(v,w)$ is nonempty only if $\M_\lambda^\theta(v,w)$ is nonempty. Hence $\tilde{v}$ is a root. If $w=0$, then $\tilde{v}$ has disconnected support and cannot be a root. 
	\end{proof}
	
	The following useful lemma is immediate from the definition of $p$.
	\begin{lemma}\label{When is p(a+b)>p(a)+p(b)}
		If $a,b\in\ZZ_{\ge 0}^{Q_0}$, then $p(a+b)>p(a)+p(b)$ if and only if $-(a,b) > 1$. 
	\end{lemma}
	
	\subsection{Local structure}{\label{Structure of neighbourhood}}
	In this subsection we follow \cite[section 2.1.6]{bezrukavnikov2021etingof}. Similar results under the hyper-Kahler setting are first given in \cite[section 6]{nakajima1994instantons}.

	For a fixed representation type $\tau = (v^0,1;v^1,n_1;v^2,n_2;...;v^k,n_k)$, we define a new quiver $\underline{Q}$. It has $k$ vertices, in bijection with the $k$ dimension vectors $\{v^1,...,v^k\}$. The number of arrows from $i$ to $j$ is $-(v^i,v^j)$ if $i\neq j$, and $1-\frac{1}{2}(v^i,v^i) = p(v^i)$ edge loops at each $i \ge 1$. 
	
	Let the dimension vector $\underline{v} \in \ZZ_{\ge 0}^{\underline{Q}_0}$ be defined by $\underline{v}_i = n_i$. Finally, let the framing $\underline{w} \in \ZZ_{\ge 0}^{\underline{Q}_0}$ be defined by $\underline{w}_i = -(v^0,v^i)$. The property of this construction is that the representation space $T^*R(\underline{Q},\underline{v}, \underline{w})$ satisfies 
	\begin{eqnarray}\label{iso of slice H modules}
		T^*(\g.r) \oplus T^*R(\underline{Q},\underline{v}, \underline{w}) \oplus \CC^{2-(v^0,v^0)} \cong T^*R .
	\end{eqnarray}
	Let $H = \prod_{i=1}^k \GL(n_i) $, so that $H$ is identified with the subgroup of $G$ of automorphisms of a representation with type $\tau$. 
	The isomorphism \eqref{iso of slice H modules} is an isomorphism of symplectic $H$-modules, with trivial $H$-action on $\CC^{2-(v^0,v^0)}$. 
	Write $R_0:= \CC^{2-(v^0,v^0)}$ and $V := T^*R(\underline{Q},\underline{v}, \underline{w}) \oplus R_0$. 
	
	Let $\underline{\mathfrak{p}}:= \CC^k \cong (\cartanh/[\cartanh,\cartanh])^*$, where $\cartanh$ is the Lie algebra of $H$, viewed as a Lie subalgebra of $\g$.  For $\theta\in \ZZ^{Q_0}$, define
	\[\underline{\M}_{\underline{\mathfrak{p}}}^{\theta}(\underline{v},\underline{w}) := \M_{\underline{\mathfrak{p}}}^{\theta}(\underline{Q},\underline{v},\underline{w})\] 
	Here we abuse notation and write $\theta$ for the restriction of $\theta$ to $H$. 
	We define $\underline{\M}_{{\mathfrak{p}}}^{\theta}(\underline{v},\underline{w}) := \mathfrak{p} \times_{\underline{\mathfrak{p}}}\underline{\M}_{\underline{\mathfrak{p}}}^{\theta}(\underline{v},\underline{w})$, 
	via the shifted restriction morphism $\mathfrak{p}\to \underline{\mathfrak{p}}, \eta \mapsto (\eta-\lambda)|_\cartanh$.
	
	Define  $\M_{\mathfrak{p}}^0(v,w)^{\wedge_x} :=\Spec (\CC[\M_\mathfrak{p}^0(v,w)]^{\wedge_x})$ where $\wedge_{x}$ in the right hand side denotes the completion with respect to the maximal ideal of $x$. 
	Define $(\underline{\M}_\mathfrak{p}^0(\underline{v},\underline{w})\times R_0)^{\wedge_0}$ similarly. 
	For generic $\theta$, define 
	\[\M_\mathfrak{p}^\theta(v,w)^{\wedge_x} := \M_\mathfrak{p}^0(v,w)^{\wedge_x} \times_{\M_\mathfrak{p}^0(v,w)} \M_\mathfrak{p}^\theta (v,w);\] and define
	$\underline{\M}^{\theta}_\mathfrak{p}(\underline{v},\underline{w})^{\wedge_0}$ similarly. The following theorem describes the local structure of $x$ in $M_\mathfrak{p}^\theta (v,w)$.
	
	\begin{theorem}[{\cite[Section 2.1.6]{bezrukavnikov2021etingof}}]{\label{structure of nbhd theorem}}
		There is a commutative diagram 
		\[\begin{tikzcd}
			\M_\mathfrak{p}^\theta (v,w)^{\wedge_x}  \arrow[d,"\rho"] \arrow[r,"\sim"] 
			&
			(\underline{\M}^\theta_\mathfrak{p}(\underline{v},\underline{w})\times R_0)^{\wedge_0} \arrow[d,"\underline{\rho} \times \id"] \\
			\M_\mathfrak{p}^0(v,w)^{\wedge_x}
			\arrow[r,"\sim"] & (\underline{\M}_\mathfrak{p}^0(\underline{v},\underline{w}) \times R_0)^{\wedge_0}
		\end{tikzcd}, \]
		where the horizontal morphisms are isomorphisms, and $\hat\rho: \underline{\M}^\theta_0(\underline{v},\underline{w})\to \underline{\M}_0^0(\underline{v},\underline{w})$ is the natural projective map. 
	\end{theorem}
	Specializing to $\lambda\in\mathfrak{p}$, we get isomorphisms $\M_\lambda^\theta (v,w)^{\wedge_x} \cong (\underline{\M}^\theta_0(\underline{v},\underline{w})\times R_0)^{\wedge_0}$, where $\theta$ is generic or $0$. 
	
	Let us record an analytic version of the local structure theorem. Consider the homogeneous bundle $G\times^H ((\g/\cartanh)^*\oplus V)$. It is isomorphic to the Hamiltonian reduction $(T^*G\times V)\hamiltonianreduction_0H$, and is therefore symplectic. The natural $G$ action on $G\times^H ((\g/\cartanh)^*\oplus V)$ is Hamiltonian, and the moment map is given by $\mu_G([g,\alpha,v]) = \Ad g(\alpha+ \mu_H(v))$, where $\mu_H$ is the moment map for the $H$-action on $V$.
	
	Let $\pi: T^*R \to T^*R\gitquo G$, and $\underline{\pi}: G\times^H ((\g/\cartanh)^*\oplus V) \to  ((\g/\cartanh)^*\oplus V) \gitquo H$ be the categorical quotients by $G$. The following proposition is an application of \cite[Proposition 3]{losev2006symplectic}.
	\begin{proposition}\label{analytic local structure theorem}
		There is an analytic neighbourhood $U$ of $x$ in $\M_\mathfrak{p}^0(v,w)$, an analytic neighbourhood $\underline{U}$ of $0$ in $\underline{\M}_\mathfrak{p}^0(\underline{v},\underline{w}) \times R_0$, and a symplectic isomorphism $\phi$ of analytic $U$ and $\underline{U}$ intertwining the morphisms 
		$\M_\mathfrak{p}^0(v,w) \to \mathfrak{p}, \underline{\M}_\mathfrak{p}^0(\underline{v},\underline{w})\to \mathfrak{p}$. It lifts to an isomorphism 
		\[\tilde{\phi}: \pi\inverse(U) \to \underline{\pi}\inverse (\underline{U})\]
		of $G$-stable analytic neighbourhoods of the orbits $Gr\subset T^*R$ and of $G.[1,0,0]\subset G\times^H ((\g/\cartanh)^*\oplus V) $; $\tilde{\phi}$ is symplectic, $G$-equivariant, and intertwines moment maps. 
	\end{proposition}

	\subsection{Maffei's isomorphism}{\label{Maffei's isomorphism}}
	Suppose $(\theta,\lambda)$ is generic. Then for any $\sigma\in W_Q$, the Weyl group of the quiver $Q$, we have an isomorphism 
	\[\M_\lambda^\theta(v,w) \cong \M_{\sigma\lambda}^{\sigma\theta}(\sigma\bullet v,w)\]
	where we view $\sigma$ as an element of $W_{Q^\infty}$, $\sigma\bullet v = \sigma(v+\alpha_\infty)- \alpha_\infty$ (see \Cref{equivalent description}), $\sigma \theta$ and $\sigma \lambda$ are defined so that $(\sigma \theta)\cdot (\sigma v) = \theta \cdot v$, $(\sigma \lambda)\cdot (\sigma v) = \lambda \cdot v$. Equivalently, the weight $\nu$ corresponding to $(\sigma\bullet v,w)$ is $\sigma\nu$, and $\sigma$ acts on $\ZZ^{Q^\infty_0}$ by $\tilde{v}\mapsto \sigma\tilde{v}$. 
	These isomorphisms are first proved by Maffei in \cite{maffei2002remark}. We refer to them as Maffei's isomorphisms. 
	\begin{remarklabeled}{\label{use maffei to assume nu dominant}}
		We can find $\sigma \in W_{Q}$ 
		such that $\sigma\tilde{v}$ has minimal height with respect to the simple real roots of $Q$; equivalently, $(\sigma\tilde{v}, \alpha_i)\le 0$ for all $i\in Q_0$ (this is automatically true for imaginary simple roots); equivalently, $\nu' = \Lambda_{w} - \sum_{i\in Q_0} (v')_i\alpha_i$ is dominant. 
		Therefore, we may always assume $\nu$ is dominant; equivalently, $(\tilde{v},\alpha_i)\le 0$ for all $i\in Q_0$. Note that this does not imply $(\tilde{v},\alpha_\infty) \le 0$. 
	\end{remarklabeled}

	\subsection{$\M_\lambda(v,w)$ v.s. $\M_\lambda^0(v,w)$}
	In this subsection we study the relation between affine quiver varieties and affinizations of smooth quiver varieties. 
	
	\begin{proposition}[{\cite[Corollary 2.4, Proposition 2.5]{bezrukavnikov2021etingof}}]
		\label{flat mu implies affinization equals affine}
		If the moment map $\mu$ is flat, then $\M_\lambda^0(v,w) \cong \M_\lambda(v,w)$. 
	\end{proposition}
	Let us record some criteria for flatness of $\mu$. 
	\begin{theorem}[{\cite[Theorem 1.1]{crawley2001geometry}}]\label{CB flatness of moment map theorem}
		Fix a quiver $Q$ and a dimension vector $v$ (without framing). The following are equivalent. 
		\begin{enumerate}
			\item $\mu: T^*R\to \g$ is flat.
			\item $\dim \mu\inverse(0) = v\cdot v - 1+ 2p(v) = \dim T^*R - \dim G +1$. 
			\item For any decomposition $v = \beta_1+ \cdots + \beta_k$, where all $\beta_i$ are positive roots, $p(v) \ge \sum_{i=1}^k p(\beta_i)$. 
			\item For any decomposition $v = \beta_1+ \cdots + \beta_k$, where all $\beta_i \in \ZZ_{\ge 0}^{Q_0}$, $p(v) \ge \sum_{i=1}^k p(\beta_i)$. 
		\end{enumerate}
	\end{theorem}
	In particular, if $\tilde{v}\in\Sigma_0$, then $\mu$ is flat. Moreover, it is clear by \Cref{CB simple dimension criteria} that $\tilde{v}\in\Sigma_0$ implies $\tilde{v}\in\Sigma_{\tilde{\lambda}}$ for all $\lambda\in \mathfrak{p}$. Thus, we get the following corollary of \Cref{flat mu implies affinization equals affine}. 
	\begin{corollary}
		If $\tilde{v}\in\Sigma_0 $, then $\M_\lambda^0(v,w) \cong \M_\lambda(v,w)$ for all $\lambda\in \mathfrak{p}$.
	\end{corollary}
	
	\begin{corollary}{\label{for dominant nu and finite or affine quiver affinization = affine}}
		If $Q$ is a finite or affine type quiver, and $\nu = \Lambda_w - \sum_{i\in Q_0} v_i \alpha_i$ is dominant, then $\M_0^0(v,w) \cong \M_0(v,w)$. 
	\end{corollary}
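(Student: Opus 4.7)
The plan is to apply Proposition~\ref{flat mu implies affinization equals affine} ($\mu$ flat implies $\M_0^0\cong\M_0$) by verifying flatness of the moment map $\mu$ directly, rather than going through the preceding corollary: in the affine case $\tilde v$ need not lie in $\Sigma_0$ (for example, $Q=\tilde A_1$ with $v=\delta$ and $w=(1,0)$ gives $\nu$ dominant, yet the decomposition $\tilde v=\alpha_\infty+\delta$ only yields equality, not strict inequality, in the defining condition of $\Sigma_0$). The criterion needed is that $\mu$ is flat if and only if $p(\tilde v)\ge\sum_i p(\beta^i)$ for every decomposition $\tilde v=\sum_i\beta^i$ into positive roots of $Q^\infty$; this is a weakening of the $\Sigma_0$ condition that follows from the dimension formula in \cite[Theorem 1.1]{crawley2001geometry}.

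Any such decomposition has exactly one summand $\beta^0$ with $\beta^0_\infty=1$; the remaining $\beta^1,\ldots,\beta^n$ are positive roots of $Q$. Setting $\gamma:=\sum_{i\ge 1}\beta^i$, a direct Tits-form calculation combined with $\langle\nu,\gamma\rangle=-(\tilde v,\gamma)$ gives
\[
p(\tilde v)-p(\beta^0) = \langle\nu,\gamma\rangle+\tfrac12(\gamma,\gamma),
\]
and $\langle\nu,\gamma\rangle\ge 0$ by dominance of $\nu$.

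For $Q$ of finite type every positive root is real, so $\sum_{i\ge 1}p(\beta^i)=0$; positive definiteness of the Tits form gives $(\gamma,\gamma)>0$ for nonzero $\gamma$, so in fact $\tilde v\in\Sigma_0$ and the preceding corollary already applies. For $Q$ of affine type, split $\gamma=\gamma'+M\delta$ where $\gamma'$ is the sum of the real summands among the $\beta^i$ and $M=\sum_{i\in I}m_i$ collects the imaginary ones $\beta^i=m_i\delta$; then $(\gamma,\gamma)=(\gamma',\gamma')\ge 0$ because $\delta$ spans the radical of the Tits form of $Q$, while $\sum_{i\ge 1}p(\beta^i)=|I|$. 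Nonemptiness of $\M_0(v,w)$ forces $w\ne 0$ by Lemma~\ref{Nonempty smooth quiver variety implies root}, and positivity of all entries of $\delta$ then gives $\langle\nu,\delta\rangle=w\cdot\delta\ge 1$; hence $\langle\nu,\gamma\rangle\ge M\langle\nu,\delta\rangle\ge M\ge|I|$, which yields the required weak inequality.

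The main obstacle is the flatness criterion itself: the excerpt only records the implication $\tilde v\in\Sigma_0\Rightarrow\mu$ flat (involving the strict inequality), so one must either cite or re-derive the analog starting from the weak inequality from \cite[Theorem 1.1]{crawley2001geometry}. Once that is in hand, the remaining case analysis is routine bookkeeping with the Tits form and the null root.
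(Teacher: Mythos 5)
Your proposal is correct, and it takes a genuinely different route from the paper. The paper's proof is a one-line citation: by \cite[Lemma 2.1]{bezrukavnikov2013etingof} the moment map $\mu$ is flat under these hypotheses, and then \Cref{flat mu implies affinization equals affine} gives the isomorphism. You instead reprove the flatness by hand, and your computation is sound: the reduction to decompositions $\tilde v=\beta^0+\sum_{i\ge1}\beta^i$ with $\beta^0_\infty=1$, the identity $p(\tilde v)-p(\beta^0)=\langle\nu,\gamma\rangle+\tfrac12(\gamma,\gamma)$, positive definiteness in finite type, and the bound $\langle\nu,\gamma\rangle\ge M\langle\nu,\delta\rangle\ge M\ge|I|$ against $\sum_{i\ge1}p(\beta^i)=|I|$ in affine type all check out. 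The ``main obstacle'' you flag is not really one: the combinatorial flatness criterion (flatness of $\mu$ is equivalent to $p(\tilde v)\ge\sum_t p(\beta^t)$ over all decompositions into positive roots of $Q^\infty$) is precisely \cite[Theorem 1.1]{crawley2001geometry}, which the paper already invokes in the corollary immediately preceding this one, so it is a citation rather than a re-derivation. Your observation that $\tilde v\in\Sigma_0$ can fail in affine type (your $\tilde A_1$ example is right) correctly explains why that preceding corollary alone does not suffice and why the paper switches to the BL lemma here. One point worth being aware of: your affine case needs $\langle\nu,\delta\rangle\ge1$, i.e.\ $w\neq0$, which you extract from nonemptiness via \Cref{Nonempty smooth quiver variety implies root}; this is consistent with the paper's standing assumptions, and indeed any flatness-based argument (including the citation route) needs to exclude the degenerate situation $w=0$, $v=n\delta$, where $\mu$ is genuinely non-flat. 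In exchange for being longer, your argument is self-contained within the combinatorics already present in the paper and in effect reproduces the content of the cited BL lemma.
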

	\begin{proof}
		By \cite[Lemma 2.1]{bezrukavnikov2021etingof}, $\mu$ is flat in these cases. 
	\end{proof}
	
	Although it is not always true that $\M_0^0(v,w) \cong \M_0(v,w)$, we have the following result. 
	
	\begin{proposition}\label{affinization is an aff quiv variety by Maffei}
		Suppose $\M_0(Q,v,w)$ is nonempty. Then \[\M_0(Q,v,w)\cong \M_0^0(Q',v',w')\]for possibly different $Q',v'$ and $w'$. 
	\end{proposition}
	\begin{proof}
		By \Cref{use maffei to assume nu dominant} we may assume $(\tilde{v},\alpha_i) \le 0$ for all simple roots $\alpha_i$. The assumption that $\M_0(Q,v,w)$ is nonempty implies $\tilde{v}$ is a root, so $w\neq 0$. 
		
		If $(\tilde{v},\alpha_\infty) \le 0$ as well, then $\tilde{v}$ lie in the fundamental domain $ F_0$. If $\tilde{v}\in \Sigma_0$ then $\mu$ is flat. Otherwise, $\tilde{v}$ falls in the 3 cases of \cite[Theorem 8.1]{crawley2001geometry}. Case (I) there cannot appear since the multiplicity of $\alpha_\infty$ is 1 in $\tilde{v}$, and in Case (II) and (III), $\mu$ is flat by \cite[Theorem 1.1]{su2006flatness}. 
		Therefore, $\M_0^0(Q,v,w) \cong \M_0(Q,v,w)$ by \Cref{flat mu implies affinization equals affine}.
		
		Suppose now $(\tilde{v},\alpha_\infty) = 2+(v,\alpha_\infty) >0$. Then since $w\neq 0$, we must have $(v,\alpha_\infty)  = -1$. Therefore, the vertex $\infty$ is connected to a unique vertex $i \in Q_0$, and $v_i = 1$.
		
		Therefore, we can view the vertex $i$ are a framing and $\infty$ as a usual vertex, and apply Maffei's isomorphism for $s_\infty$. Then we see $\M_0(Q,v,w) \cong \M_0(Q',v',w')$, where $Q'$ is obtained from $Q$ by deleting the vertex $i$, $v'_j = v_j$ for $j\in Q'_0$ and the new framings $w'_j$ equals the number of arrows between $i$ and $j$. This operation cuts down the size of the underlying quiver. Repeat this process if necessary, and we either get a $\M_0(Q',v',w')$ for which $\tilde{v'}\in F_0$, or get $Q'$ the quiver where all vertices has edge loops. In either case, the moment map $\mu$ for $(Q',v',w')$ is flat by \cite[Theorem 1.1]{su2006flatness}, and we have $\M_0(Q,v,w) \cong \M_0(Q',v',w')\cong  \M_0^0(Q',v',w')$. 
	\end{proof}
	Thanks to the above proof, we may always make the following assumption.
	\begin{convention}\label{convention assume tildev is dominant and mu is flat}
		We assume $(\tilde{v},\alpha_i) \le 0$ for all $i\in Q_0 \cup \{\infty\}$, and that $\mu$ is flat. In particular, $\M_\lambda(v,w) = \M_\lambda^0(v,w)$. 
	\end{convention}
	
	\subsection{The canonical decomposition}
	Crawley-Boevey introduced a decomposition of affine quiver varieties into a product of simpler subvarieties in \cite{cb2000decomposition}. 
	More precisely, fix a quiver $Q$, a dimension vector $v$, and a parameter $\lambda \in\mathfrak{p}$. Assume there is no framing. 
	\begin{theorem}[{\cite[Theorem 1.1, Proposition 1.2]{cb2000decomposition}}]\label{CB canonical decomposition}
		There exists a decomposition $v = \sum_{i=1}^k m_i v_i$ (called the \textit{canonical decomposition} of $v$), where $m_i$ are positive integers and $v_i \in \Sigma_\lambda$, such that any  decomposition of $v$ as a sum of elements in $\Sigma_\lambda$ is a refinement of this decomposition. We have 
		\[\M_\lambda^0(v) = \prod_{i=1}^k S^{m_i} \M^0_\lambda(v_i),\]
		which we call the canonical decomposition of $\M_\lambda^0(v)$. 
		When $v_i$ is a real root, $\M^0_\lambda(v_i)$ is a point. When $v_i$ is a non-isotropic imaginary root, $m_i = 1$. 
	\end{theorem}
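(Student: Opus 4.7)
The plan is to extract the canonical decomposition from the stratification of $\M_\lambda^0(v)$ by representation type (\Cref{CB strata dimension formula}), using the interpretation of $\Sigma_\lambda$ as dimension vectors of simple $\Pi^\lambda$-modules (\Cref{CB simple dimension criteria}).

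For existence, I would consider all decompositions $v = \beta^1 + \cdots + \beta^n$ into positive roots of $Q$ with $\lambda \cdot \beta^j = 0$ (finitely many, since each $\beta^j \le v$), select one maximizing $\Phi := \sum_j p(\beta^j)$, and among those minimize $n$. Each summand in such an extremal decomposition must lie in $\Sigma_\lambda$: otherwise, by the contrapositive of \Cref{CB simple dimension criteria}, some $\beta^j$ admits a nontrivial $\lambda$-orthogonal decomposition into positive roots with $p$-sum $\ge p(\beta^j)$, and substituting either strictly increases $\Phi$ or leaves it fixed while increasing $n$, contradicting extremality. After grouping repeated terms, I write $v = \sum_i m_i \alpha_i$ with the $\alpha_i \in \Sigma_\lambda$ distinct.

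For uniqueness and the refinement property, I would use the geometry of $\M_\lambda^0(v)$: each representation type $\tau = (v^1, n_1; \ldots; v^k, n_k)$ labels a stratum of dimension $2\sum_j p(v^j)$, and irreducibility of $\M_\lambda^0(v)$ (\cite[Corollary 1.4]{cb2000decomposition}) forces a unique dense open stratum. This must be the one with maximal $\sum p(v^j)$ among $\Sigma_\lambda$-decomposition types, matching the canonical decomposition constructed above; strata in its boundary correspond to degenerations in which simples break as direct sums of smaller simples with $\Sigma_\lambda$-dimensions, so their labels give refinements of $v = \sum m_i \alpha_i$.

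For the product decomposition, I would write down the natural direct-sum map $\prod_i S^{m_i}\M_\lambda^0(\alpha_i) \to \M_\lambda^0(v)$, $((V^{(1)}_i, \ldots, V^{(m_i)}_i))_i \mapsto \bigoplus_{i,\ell} V^{(\ell)}_i$, and verify bijectivity on closed points using Krull--Schmidt for semisimple $\Pi^\lambda$-modules together with the refinement property; to upgrade to a scheme isomorphism, I would invoke normality of $\M_\lambda^0(v)$ \cite{cbnormality2001} and Zariski's main theorem. Finally, for the last two claims: if $\alpha_i$ is a real root then $p(\alpha_i) = 0$ makes $\M_\lambda^0(\alpha_i)$ a point; if $\alpha_i$ is non-isotropic imaginary with $m_i \ge 2$, replacing the $m_i$ copies of $\alpha_i$ in the extremal decomposition by the single summand $m_i \alpha_i$ yields another decomposition into positive $\lambda$-orthogonal roots with strictly larger $\Phi$---via the identity $p(m_i \alpha_i) - m_i p(\alpha_i) = (1-m_i)\bigl(1 + \tfrac{m_i}{2}(\alpha_i, \alpha_i)\bigr) > 0$ using $(\alpha_i, \alpha_i) \le -2$---contradicting maximality and forcing $m_i = 1$. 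The main obstacles I expect are the uniqueness step (uniqueness of the open stratum) and the scheme-theoretic upgrade of the product decomposition beyond a bijection on closed points.
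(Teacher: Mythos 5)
This statement is imported verbatim from Crawley--Boevey \cite{cb2000decomposition} and the paper gives no proof of it, so your attempt can only be measured against the cited source; as written it has genuine gaps. First, the extremal construction is broken by its tie-break. If a part $\beta^j\notin\Sigma_\lambda$, then \Cref{CB simple dimension criteria} only gives a splitting whose $p$-sum is $\ge p(\beta^j)$; substituting it either raises $\Phi$ or keeps $\Phi$ fixed while \emph{increasing} the number of parts, and since you chose the decomposition with \emph{minimal} $n$ among $\Phi$-maximizers, the second alternative contradicts nothing (you would need to maximize $n$). Concretely, for the $A_2$ quiver with $\lambda=0$ and $v$ the sum of the two simple roots, both $\{v\}$ and the two simple roots give $\Phi=0$, and your rule selects $\{v\}$, whose single part is not in $\Sigma_0$. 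More seriously, the refinement property --- the actual content of \cite[Theorem 1.1]{cb2000decomposition} --- is not established. Your geometric route needs (i) the unproved claim that if one stratum lies in the closure of another, then its representation type refines the other's at the level of dimension vectors, and (ii) an identification of your $\Phi$-maximal decomposition with the type of the dense stratum; (ii) is not automatic, because $\Phi$ counts multiplicities while $d(\tau)=2\sum_i p(v^i)$ in \Cref{CB strata dimension formula} counts each distinct simple once (for instance $m$ copies of an isotropic root are realized in the open stratum by $m$ pairwise non-isomorphic simples, whereas a real root supports a unique simple, so multiplicities cannot be spread out there).

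Second, for the product decomposition, surjectivity of the direct-sum map on closed points does follow from the refinement property once that is known, but injectivity does not follow from Krull--Schmidt plus refinement. When two non-isomorphic simples share the same dimension vector (possible exactly when that vector is imaginary), a priori there can be two different ways of grouping the simple summands of a semisimple module into bunches of dimensions $\alpha_1,\dots,\alpha_k$ with the prescribed multiplicities, giving distinct points of $\prod_i S^{m_i}\M^0_\lambda(\alpha_i)$ with the same direct sum; excluding this requires further combinatorial information about the canonical decomposition and is genuine content of Crawley--Boevey's argument, not a formality. (The upgrade from a bijective morphism to an isomorphism via normality \cite{cbnormality2001} and Zariski's main theorem is fine in characteristic $0$, and your closing computations are correct --- e.g. $p(m\alpha)-m\,p(\alpha)=(1-m)\bigl(1+\tfrac{m}{2}(\alpha,\alpha)\bigr)>0$ for $(\alpha,\alpha)\le -2$, $m\ge 2$ --- but they only say something about the canonical decomposition after the unproved identification of your extremal decomposition with it.)
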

	We can define the canonical decomposition of $\M_\lambda^0(v,w)$ by identifying it with $\M_{\tilde{\lambda}}^0(Q^\infty,\tilde{v})$. 
	
	\begin{lemma}\label{all canonical decomposition components are framed}
		Assume \Cref{convention assume tildev is dominant and mu is flat}. Suppose $v_i$ appears in the canonical decomposition of $\M_{\tilde{\lambda}}^0(Q^\infty,\tilde{v})$. Write $v_i = \sum_{j\in Q^\infty_0} c_{ij}\alpha_j$. Then some $c_{ij} = 1$. 
	\end{lemma}
	\begin{proof}
		If $\tilde{v} \in \Sigma_{\tilde{\lambda}}$ already, then the canonical decomposition is trivial, and note the $\infty$-component of $\tilde{v}$ is 1. If not, by \cite[Section 6]{cb2000decomposition}, we are in one of the following three cases: 
		\begin{enumerate}[(I)]
			\item The quiver $Q^\infty$ is an affine type quiver and $\tilde{v} = m\delta$ for some $m\ge 2$, where $\delta$ is the minimal imaginary root. 
			\item The quiver $Q^\infty$ decomposes as in \cite[Lemma 5.3]{cb2000decomposition}. More precisely, the vertex set $Q_0^\infty$ is a disjoint union $\mathcal{J} \cup \mathcal{K}$, there is a unique arrow $a$ with one end in $\mathcal{J}$ and
			the other in $\mathcal{K}$, say connecting $j\in \mathcal{J}$ and $k\in \mathcal{K}$. Moreover, $\tilde{v}_j = \tilde{v}_k = 1$. 
			\item The quiver $Q^\infty$ decomposes as in \cite[Lemma 5.4]{cb2000decomposition}. More precisely, the vertex set $Q_0^\infty$ is a disjoint union $\mathcal{J} \cup \mathcal{K}$, there is a unique arrow $a$ with one end in $\mathcal{J}$ and
			the other in $\mathcal{K}$, say connecting $j\in \mathcal{J}$ and $k\in \mathcal{K}$. Moreover, $\tilde{v}_j  = 1$, and the restriction of $Q^\infty$ to $\mathcal{K}$ is an affine type quiver, the restriction of $\tilde{v}$ to $\mathcal{K}$ is $m\delta$ for some $m\ge 2$, where $\delta$ is the minimal imaginary root. 
		\end{enumerate} 
		Since $\tilde{v}_\infty=1$, case (I) is impossible. 
		The canonical decomposition of $\M_{\tilde{\lambda}}^0(Q^\infty,\tilde{v})$ is obtained as the product of those of $\M_{\tilde{\lambda}}^0(Q^\infty|_{\mc{J}},\tilde{v}|_{\mc{J}})$ and $\M_{\tilde{\lambda}}^0(Q^\infty|_{\mc{K}},\tilde{v}|_{\mc{K}})$. 
		In case (II), the $j$-component of $\tilde{v}|_{\mc{J}}$ and the $k$-component of $\tilde{v}|_{\mc{K}}$ are 1; 
		in case (III), the $j$-component of $\tilde{v}|_{\mc{J}}$ is 1, and $\M_{\tilde{\lambda}}^0(Q^\infty|_{\mc{K}},\tilde{v}|_{\mc{K}}) = S^m\M_{\tilde{\lambda}}^0(Q^\infty|_{\mc{K}},\delta)$, where $\delta$ also has a component 1. Inductively, we obtain \Cref{all canonical decomposition components are framed}. 
	\end{proof}
	Therefore, we may view each component $\M_{\tilde{\lambda}}^0(Q^\infty,v_i)$ as a \textit{framed} quiver variety. When $\lambda = 0$, it is clear that under \Cref{convention assume tildev is dominant and mu is flat}, the Namikawa-Weyl group of $\M_0(v,w) \cong\M_0^0(v,w)$ is the direct product of those of $S^{m_i} \M^0_0(Q^\infty,v_i)$. 
	This allows us to assume $\tilde{v} \in \Sigma_0$. 
	
	In summary, we have reduced the problem of computing the Namikawa-Weyl group of $\M_0(v,w)$ to the following situation.
	\begin{convention}\label{best convention on tilde v}
		The dimension vector $\tilde{v}\in\ZZ_{\ge 0}^{Q_0^\infty}$ is a root, lies in $\Sigma_0$, and satisfies $(\tilde{v},\alpha_i)\le 0$ for all $i\in Q_0^\infty$. The moment map $\mu:T^*R\to \g$ is flat, so that $\M_\lambda(v,w) = \M_\lambda^0(v,w)$ for $\lambda\in\mathfrak{p}$. 
	\end{convention}
	
	\begin{remarklabeled}\label{we can compute weyl group for honest affine quiver varieties} 
		Thanks to \Cref{all canonical decomposition components are framed}, we can compute the Namikawa-Weyl group of any affine quiver variety $\M_0^0(v,w)$ once we know how to compute the Namikawa-Weyl group of $\M_0(v,w)$. 
	\end{remarklabeled}
	
	\subsection{Isotropic decomposition}{\label{Deformation of conical symplectic singularities}}
	From the definition, the most straightforward way to compute the Namikawa-Weyl group of a conical symplectic singularity is to find its codimension 2 leaves, determine the corresponding slice Kleinian singularities, and examine the diagram automorphisms induced by the fundamental group action. For affine quiver varieties under \Cref{best convention on tilde v}, the first two steps are done by \cite[Definition 1.18, Theorem 1.20]{bellamy2021symplectic}. 
	\begin{theorem}\label{BS isotropic decomposition}
		Let $v\in \Sigma_\lambda$ be imaginary. Then the codimension 2 leaves of $\M_\lambda^0(v)$ corresponds to the representation types of the form
		\[\tau = (\beta^1,1;\beta^2,1;\cdots;\beta^s,1; \gamma^1,m_1;\cdots;\gamma^t,m_t)\]
		such that 
		\begin{enumerate}
			\item the $\beta^i$ are imaginary roots, and the $\gamma^i$ are pairwise distinct real roots;
			\item the slice quiver $\underline{Q} $, after removing all the edge loops, is an affine type quiver;
			\item the dimension vector $\tilde{\underline{v}} = (1,1,\cdots,1;m_1,\cdots,m_t)$ equals the minimal imaginary root of $\underline{Q} $. 
		\end{enumerate}
		The slice Kleinian singularity has the same type as the slice quiver $\underline{Q}$. 
	\end{theorem}
	The decomposition $v = \beta^1+ \cdots + \beta^s + m_1\gamma^1+ \cdots \gamma^tm_t$ is called an \textit{isotropic decomposition}.
	Note that, when $v\in\Sigma_\lambda$ is real, the dimension formula \Cref{CB strata dimension formula} implies $\M_\lambda^0(v)$ is a point. 
	
	In \cite{bellamy2021symplectic}, the action of $\pi_1(\L_i)$ on $\hat{\cartanh}_i^*$ is not computed, so we cannot directly determine $\cartanh_X$ and $W$. 
	In the remaining parts of this paper, we present two methods for computing $W$, one via tautological line bundles (\Cref{section Tautological line bundles and the Kirwan}) and the other via subgeneric deformations (\Cref{Codimension 2 leaves in deformed quiver varieties}). 

	\section{Tautological line bundles}\label{section Tautological line bundles and the Kirwan}
	In this section, we recall the notion of tautological line bundles on $\M_0^\theta(v,w)$ and describe the Namikawa-Cartan space of $\M_0^0(v,w)$ in terms of their Chern classes. We also examine what type of Weyl groups can appear as components of the Namikawa-Weyl group of $\M_0^0(v,w)$. 
	We make the assumptions of \Cref{best convention on tilde v}. 
	\subsection{Tautological line bundles}\label{subsection taut line bundles}
	Assume $\theta$ is generic so that $G$ acts freely on  $(T^*R)^{\theta-ss}$. Let $\chi\in\ZZ^{Q_0}$ be a character of $G$. 
	\begin{definition}
		The line bundle over $\M^\theta_\mathfrak{p}(v,w) $, 
		whose total space is the homogeneous bundle
		\[\mu\inverse(\mathfrak{p})^{\theta -ss} \times^G \CC_\chi\] 
		where $G$ acts on the one dimensional vector space $\CC_\chi$ by the character $\chi$, is called a \textit{tautological line bundle}. We denote it by $\mathcal{O}(\chi,\M^\theta_\mathfrak{p})$. 
		
		For any $\lambda\in \mathfrak{p}$, we denote by $\mathcal{O}(\chi,\M^\theta_\lambda)$ the restriction of $\mathcal{O}(\chi,\M^\theta_\mathfrak{p})$ to $\M^\theta_\lambda(v,w)$; its total space is \[\mu\inverse(\lambda)^{\theta -ss} \times^G \CC_\chi.\] 
		
		Similarly, we can define the line bundle $\mathcal{O}(\chi,\M^\theta_{\CC \lambda})$ on $\M^\theta_{\CC \lambda}(v,w)$. 
	\end{definition}
	
	For generic $\theta$,  $\M^\theta_\mathfrak{p}(v,w) \to\mathfrak{p}$ is a Poisson deformation of $\M^\theta_0(v,w) $. By \Cref{Namikawa diagram of universal deformation }, there is a unique linear map $\kappa: \mathfrak{p} \to H^2(\M^\theta_0(v,w),\CC ) $ such that the pullback of the universal deformation by $\kappa$ is $\M^\theta_\mathfrak{p}(v,w) \to\mathfrak{p}$. 
	The following theorem relates tautological line bundles with the map $\kappa$. Let $c_1(\L)$ denote the first Chern class of a line bundle $\L$. 
	\begin{theorem}[{\cite[Proposition 3.2.1]{losev2012isomorphisms}}]\label{kirwan map coincides with chern class}
		The map $\kappa$ coincides with the map 
		\[\ZZ^{Q_0}\to H^2(\M_0^\theta(v,w), \CC),\  \chi\mapsto c_1(\mathcal{O}(\chi,\M^\theta_0))\]
		extended by complex linearity.  
	\end{theorem}
	\begin{theorem}[{\cite[Corollary 3.8]{mcgerty2019springer}}]{\label{k is surjective}}
		The map $\kappa$ is surjective. 
	\end{theorem}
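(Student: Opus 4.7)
The plan is to identify both sides of $\kappa$ explicitly and then compute the map on a basis.

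First, I would identify $H^2(\M^\theta_0(v,w), \CC)$ via the Kirwan map. Since $\M^\theta_0(v,w)$ is the GIT quotient $\mu\inverse(0)^{\theta\text{-}ss}\gitquo G$ of a vector space (a smooth manifold with well-behaved group action), the Kirwan surjectivity theorem yields a surjection
\[
H^*_G(\mu\inverse(0)^{\theta\text{-}ss}, \CC) \twoheadrightarrow H^*(\M^\theta_0(v,w), \CC).
\]
In degree 2, the equivariant classes on the contractible ambient $T^*R$ are pulled back from $H^2_G(\mathrm{pt}) = (\mf{g}/[\mf{g},\mf{g}])^*$, which is spanned by the traces $\tr|_{V_i}$ coming from characters $\det V_i$, $i \in Q_0$. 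Hence the first Chern classes $c_1(\det \mc{V}_i)$ of the tautological bundles $\mc{V}_i$ span $H^2(\M^\theta_0(v,w), \CC)$.

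Second, I would compute $\kappa$ on the standard basis of $\CC^{Q_0}$. The family $\M^\theta_{\mf{p}}(v,w) \to \mf{p} = \CC^{Q_0}$ of \Cref{Mp is deformation over M0} is the graded Poisson deformation obtained by varying the value of the moment map. Its Kodaira--Spencer class at $e_i \in \CC^{Q_0}$ can be identified, via the period-class correspondence for Hamiltonian reductions, with $c_1(\det \mc{V}_i)$ up to a nonzero scalar: infinitesimally shifting $\lambda_i \mapsto \lambda_i + \epsilon$ changes the reduced symplectic form by the curvature of the natural connection on the tautological bundle $\det \mc{V}_i$. Combined with \Cref{Namikawa diagram of universal deformation }, this identifies $\kappa(e_i)$ with $c_1(\det \mc{V}_i)$ up to nonzero scalar.

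Putting the two steps together, $\kappa$ sends a basis of $\CC^{Q_0}$ onto a spanning set of $H^2(\M^\theta_0(v,w), \CC)$, so it is surjective. The main obstacle is the second step: pinning down the Kodaira--Spencer class of the moment-map deformation as the tautological Chern class. The cleanest route is to realize the family $\M^\theta_{\mf{p}}(v,w)$ via the symplectic reduction of the shifted moment map $\mu - \lambda$ and use the explicit comparison between the Kirwan map on equivariant cohomology and the period map on graded Poisson deformations; alternatively, one can compute directly using the universal symplectic form $d\Theta$ on $T^*R$ and its descent to the reduction.
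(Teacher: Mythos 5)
The paper does not actually prove this statement: it is quoted verbatim from \cite[Corollary 3.8]{mcgerty2019springer}, and the paper itself remarks (at the end of the subsection on the combinatorial description of the Namikawa--Weyl group) that this result follows from the Kirwan-surjectivity theorem for quiver varieties of \cite{mcgerty2018kirwan}. Your second step is consistent with how the paper treats $\kappa$ --- it records, without proof, that $\kappa$ is $\chi\mapsto c_1(\mathcal{O}(\chi,\M^\theta_\lambda))$ extended linearly, i.e.\ it is given by Chern classes of the descended (tautological) line bundles --- so the entire mathematical content of \Cref{k is surjective} is your first step: that these degree-two classes span $H^2(\M^\theta_0(v,w),\CC)$.

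That is exactly where your argument has a genuine gap. The surjection you cite, $H^*_G(\mu\inverse(0)^{\theta-ss},\CC)\to H^*(\M^\theta_0(v,w),\CC)$, is not the issue: for generic $\theta$ the $G$-action on $\mu\inverse(0)^{\theta-ss}$ is free, so this map is an isomorphism and requires no Kirwan-type input. What you actually need is that degree-two equivariant classes on $\mu\inverse(0)^{\theta-ss}$ are restricted from $H^2_G(T^*R)=H^2_G(\mathrm{pt})$, i.e.\ surjectivity of the restriction map $H^*_G(T^*R)\to H^*_G(\mu\inverse(0)^{\theta-ss})$ in degree $2$. This does not follow from the classical Kirwan surjectivity theorem: that theorem concerns the quotient of the smooth ambient space (it would give surjectivity onto $H^*\bigl((T^*R)^{\theta-ss}\gitquo G\bigr)$), not the quotient of the singular, noncompact zero fiber of the complex moment map. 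Surjectivity for this holomorphic symplectic reduction was a long-standing open problem, and it is precisely the main theorem of \cite{mcgerty2018kirwan}, on which the quoted \cite[Corollary 3.8]{mcgerty2019springer} rests; the paper also points out that the analogous statement for unframed quiver varieties is not known, underscoring that it is not formal. So as written your Step 1 assumes the theorem being proved (or an unproven extension of Kirwan's theorem), and the argument is circular; Step 2, while only sketched (the period/Kodaira--Spencer identification deserves a reference or proof), is the part that is genuinely standard here.
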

	
	Let $\chi\in\ZZ^{Q_0}$ and $\L_i$ be a codimension 2 leaf of $\M_0^0(v,w)$. Let $S_i$ denote the slice Kleinian singularity and $\tilde{S}_i$ its minimal resolution. The next proposition describes the projection of $\kappa(\chi)$ to the component $\cartanh_i$ corresponding to $\L_i$. 
	
	\begin{proposition}\label{restricting tautological line bundle to kleinian singularity resolution}
		Let 
		\[\tau = (\beta^1,1;\beta^2,1;\cdots;\beta^s,1; \gamma^1,m_1;\cdots;\gamma^t,m_t)\] 
		be the representation type of the leaf $\L_i$, 
		satisfying \Cref{BS isotropic decomposition}. Assume the $\infty$-component of $\beta^1$ is 1, and regard the vertex associated to $\beta^1$ the extended vertex of the affine type quiver. 
		The projection of $c_1(\mathcal{O}(\chi,\M^\theta_0))$ to $\cartanh_i$ is given by
		\[(\chi\cdot\beta^2,\chi\cdot \beta^3,\cdots,\chi\cdot\gamma^t) \in \cartanh_i\subset  \hat{\cartanh}^*_i \cong H^2 (\tilde{S_i},\CC). \]
	\end{proposition}
	
	\begin{proof}
		Recall \Cref{remark how to project to cartanh i}. Let $x\in\L_i$, let $U$ be a small enough analytic neighbourhood of $x$ in $\M_0^0(v,w)$ which is isomorphic to $\Delta^{\dim \M_0^0(v,w) -2}\times S_i$ and satisfies \Cref{analytic local structure theorem}. We will restrict the line bundle $\mathcal{O}(\chi,\M^\theta_0)$ to $\rho\inverse(U)\subset \M^\theta_0(v,w)$. 
		
		Let $\underline{U}$ be an analytic neighbourhood of $0\in \underline{\M}_0^0(\underline{v},\underline{w}) \times R_0$ satisfying \Cref{analytic local structure theorem}. Write
		\begin{align*}
			\pi_0^\theta : & \mu\inverse(0)^{\theta-ss} \to \M_0^\theta(v,w)\\
			\underline{\pi}_0^\theta : & G\times^H\mu_H\inverse(0)^{\theta-ss} \to \underline{\M}_0^\theta(\underline{v},\underline{w}) \times R_0
		\end{align*}
		for the projections. We see
		\[ (\pi_0^\theta)\inverse \rho\inverse (U) \cong G\times^H (\underline{\pi}_0^\theta)\inverse \underline{\rho}\inverse(\underline{U}). \]
		The total space of the restriction of $\mathcal{O}(\chi,\M^\theta_0)$ to $\rho\inverse(U)$ is 
		\begin{align}\label{local description of total space of tautological line bundle}
			\begin{split}
				(\pi_0^\theta)\inverse \rho\inverse (U) \times^G \CC_\chi &\cong (G\times^H (\underline{\pi}_0^\theta)\inverse \underline{\rho}\inverse(\underline{U}))\times^G \CC_\chi \\
				&= (\underline{\pi}_0^\theta)\inverse\underline{\rho}\inverse(\underline{U}) \times^H \CC_{\chi|_H}. 
			\end{split}
		\end{align}
		\Cref{remark how to project to cartanh i} implies that the $\cartanh_i$-component of $\kappa(\chi)$ is the Chern class of the tautological line bundle on $\underline{\M}_0^\theta(\underline{v},\underline{w})$ associated to the character $\chi|_H$ of $H$, which is exactly $(\chi\cdot\beta^2,\chi\cdot \beta^3,\cdots,\chi\cdot\gamma^t)$. 
	\end{proof}
	
	Therefore, given a codimension 2 leaf $\L_i$ and the corresponding representation type $\tau$, we can determine $\cartanh_i$ thanks to \Cref{k is surjective} and \Cref{kirwan map coincides with chern class}. 
	\subsection{Examples}
	We give examples of $\M_0^0(v,w)$ whose Namikawa-Weyl groups have type $B_n$ and $G_2$, and show that type $C_n ,n\ge 3$ and $F_4$ Weyl groups cannot appear as a component of the Namikawa-Weyl group. 
	
	\begin{example}
		By \Cref{MN weyl group dynkin quiver}, any $A,D$ or $E$ type Weyl group can be the Namikawa-Weyl group of some $\M_0^0(v,w)$. 
	\end{example}
	The following example is suggested to the author by Hiraku Nakajima, and it appears in \cite[5(v)]{braverman2017ring}. 
	\begin{example}\label{Nakajima G2 quiver via isotropic decomposition}
		Consider the following quiver:
		\begin{center}
			\begin{tikzpicture}[scale=0.9]
				\filldraw[black] (0,0) circle (1.5pt);
				\filldraw[black] (1,0) circle (1.5pt);
				\filldraw[black] (2,0) circle (1.5pt);
				\node at (2,0) (alpha3) {$\ $};
				\draw[-,thick] (0.1,0)--(0.9,0);
				\draw[-,thick] (1.1,0)--(1.9,0);
				\draw [-,thick] (2.1,-0.15)arc(-160:160:0.5 and 0.3);
				\node at (0,-0.4) {$\alpha_\infty$};	
				\node at (1,-0.4) {$\alpha_1$};
				\node at (2,-0.4) {$\beta$};
				\node at (0,0.4) {$1$};	
				\node at (1,0.4) {$2$};
				\node at (2,0.4) {$3$};
			\end{tikzpicture}
		\end{center}
		where we view $\alpha_\infty$ as a framing, so that $\tilde{v} = \alpha_\infty+2\alpha_1+3\beta$. It is not hard to see $\Tilde{v}\in\Sigma_0$, and the only codimension 2 leaf corresponds to the representation type 
		\[\tau = (\alpha_\infty, 1; \alpha_1,2; \beta,1;\beta,1;\beta,1).\]
		For any $x\in\L_\tau$, the (extended) slice quiver $\underline{Q}^\infty$ is the type $\hat{D}_4$ quiver, and the dimension vector $\tilde{\underline{v}}$ is precisely the minimal imaginary root type $\hat{D}_4$. The image of $\kappa: \mf{p} \to \hat{\cartanh}^*$ is exactly the subspace of fixed points of the degree $3$ diagram automorphism, by \Cref{restricting tautological line bundle to kleinian singularity resolution}. Therefore, the Namikawa-Weyl group of $\M_0^0(\Tilde{v})$ is the Weyl group of type $G_2$. 
	\end{example}
	\begin{example}\label{Bk example}
		For any $k\ge 0$, consider the following quiver:
		\begin{center}
			\begin{tikzpicture}[scale=0.9]
				\filldraw[black] (0,-0.6) circle (1.5pt);
				\filldraw[black] (0,0.6) circle (1.5pt);
				\filldraw[black] (1,0) circle (1.5pt);
				\filldraw[black] (2,0) circle (1.5pt);
				\filldraw[black] (3,0) circle (1.5pt);
				\filldraw[black] (4,0) circle (1.5pt);
				\draw[-,thick] (0.1,0.6)--(0.9,0.1);
				\draw[-,thick] (0.1,-0.6)--(0.9,-0.1);
				\draw[-,thick] (1.1,0)--(1.9,0);
				\draw[-,thick] (3.1,0)--(3.9,0);
				\draw [-,thick] (4.1,-0.15)arc(-160:160:0.5 and 0.3);
				\node at (2.5,0) {$\cdots$};	
				\node at (-0.5,0.6) {$\alpha_\infty$};	
				\node at (-0.5,-0.6) {$\alpha_0$};
				\node at (1,-0.4) {$\alpha_1$};
				\node at (2,-0.4) {$\alpha_2$};
				\node at (3,-0.4) {$\alpha_{k}$};
				\node at (4,-0.4) {$\beta$};
				\node at (0,1) {$1$};	
				\node at (0,-1) {$1$};	
				\node at (1,0.4) {$2$};
				\node at (2,0.4) {$2$};
				\node at (3,0.4) {$2$};
				\node at (4,0.4) {$2$};
			\end{tikzpicture}
		\end{center}
		We view $\alpha_\infty$ as a framing so ${v}=  \alpha_0 + 2(\alpha_1+\cdots + \alpha_k) + 2 \beta$. 
		A direct computation shows that 
		$\Sigma_0 = \{\alpha_i, \beta, \tilde{v}\}$, $i\in\{0,1,\cdots,k,\infty\}$, and there is only one codimension 2 leaf of $\M_0^0(v)$, corresponding to the representation type 
		\[\tau = (\alpha_\infty,1;\alpha_0,1;\alpha_1,2;\cdots, \alpha_k,2;\beta,1;\beta,1).\]
		The slice quiver $\underline{Q}^\infty$ has type $\hat{D}_{k+3}$, and it is easy to see the image of $\kappa$ is the subspace stable under the order 2 diagram folding. Therefore, the Namikawa-Weyl group of $\M_0^0(\tilde{v})$ is the Weyl group of type $B_{k+2}$. 
	\end{example}
	\begin{proposition}\label{F4 cannot appear}
		$F_4$ cannot appear as a component of the Namikawa-Weyl group. 
	\end{proposition}
	\begin{proof}
		Assume $F_4$ Weyl group appears. 
		Recall that $F_4$ is obtained by folding the type $E_6$ diagram. Therefore, by \Cref{BS isotropic decomposition}, the slice quiver $\underline{Q}^\infty$ is $\hat{E}_6$, and the dimension vector $\tilde{\underline{v}}$ is the minimal imaginary root $\delta$. 
		Under a suitable labeling, the root $\delta$ takes the form $(1,2,3,2,1,2,1)$, and we require the image of $\kappa$ to be pointwise fixed by the order 2 folding. In particular, the first two ``2" in the expression of $\delta$ must both correspond to the same real root in  $\Sigma_0(Q^\infty,\tilde{v})$, contradicting \Cref{BS isotropic decomposition}.  
	\end{proof}
	\begin{proposition}\label{Cn cannot appear}
		For $n\ge 3$, $C_n$ cannot appear as a component of the Namikawa-Weyl group. 
	\end{proposition}
	\begin{proof}
		Suppose $\L_i$ is a codimension 2 leaf such that $\cartanh_i$ has type $C_n$. Then $\hat{\cartanh}_i^*$ has type $A_{2n-1}$ or $A_{2n}$.
		Suppose first $\hat{\cartanh}_i^*$ has type $A_{2n-1}$. Then the slice quiver $\underline{Q}^\infty$ is $\hat{A}_{2n-1}$, and dimension vector $\tilde{\underline{v}}$ is the minimal imaginary root $\delta$. Let the vertices of $\hat{A}_{2n-1}$ be denoted by $\alpha_0, \alpha_1,\alpha_2,\cdots, \alpha_{n-1}, \beta, \gamma_{n-1},\cdots,\gamma_1$, where $\alpha_0$ corresponds to the extended vertex, $\alpha_j,\gamma_j$ and $\beta$ are elements of $\Sigma_0$, and the representation type of $\L_i$ is 
		\[\tau = (\alpha_0,1;\alpha_1,1;\cdots;\gamma_1,1).\]
		The diagram automorphism identifies $\alpha_i$ and $\gamma_i$. Since ${\cartanh}_i $ has type $C_n$, $\alpha_j=\gamma_j$  and they are all imaginary roots. In particular, $2\alpha_j$ are roots. 
		
		Claim: $\L_i$ cannot have codimension 2. 
		Let us compute $\dim \L_i$ first:
		\begin{align}
			\begin{split}
				\dim \L_i =d(\tau) &= 2-(\alpha_0,\alpha_0) + 2-(\beta,\beta) + 2\sum_{j\ge 1} (2-(\alpha_j,\alpha_j))\\
				&= 4n -( (\alpha_0,\alpha_0) + (\beta,\beta) + 2\sum_{j=1}^{n-1} (\alpha_i,\alpha_i))
			\end{split}
		\end{align}
		On the other hand, by assumption $\Tilde{v} = \alpha_0+\beta  + 2\sum \alpha_j  \in \Sigma_0$. Therefore,
		\begin{align}
			\begin{split}
				\dim \M_0^0(Q,v,w) & =2p(\Tilde{v})   \\ & = 2- ( (\alpha_0,\alpha_0) + (\beta,\beta) + 4\sum_{i=1}^{n-1} (\alpha_i,\alpha_i)) \\
				& +(\alpha_0,2\alpha_1) + (2\alpha_1,\alpha_0+2\alpha_2)   \\
				& + (2\alpha_2,2\alpha_1+2\alpha_3)\\
				& \vdots \\
				& + (2\alpha_{n-1},2\alpha_{n-2} + \beta) + (\beta,2\alpha_{n-1})\\
				& =2 - ( (\alpha_0,\alpha_0) + (\beta,\beta) + 4\sum_{i=1}^{n-1} (\alpha_i,\alpha_i)) + 8(n-1) 
			\end{split}
		\end{align}
		Therefore, $\L_i$ has codimension 2 if and only if $n=2$ and $(\alpha_1,\alpha_1) = 0$ (this is exactly the situation of \Cref{Bk example} when $k=0$). 
		
		The proof for the case where $\underline{Q}^\infty$ has type $\hat{A}_{2n}$ is completely analogous. 
		We conclude that $C_n$ for $n\ge 3$ cannot appear as a Namikawa-Weyl group component of $\M_0^0(v,w)$.
	\end{proof}
	\section{Subgeneric deformations}\label{Codimension 2 leaves in deformed quiver varieties}
	It is not easy to classify codimension 2 leaves of $\M_0^0(v,w)$ (equivalently, find all the isotropic decompositions \Cref{BS isotropic decomposition}), although once this is done, it is straightforward to calculate the Namikawa-Weyl group by methods in 
	\Cref{section Tautological line bundles and the Kirwan}. 
	We will take a different approach to determine the Namikawa-Weyl groups, one via subgeneric deformation. 
	We will see the relation between subgeneric deformations and isotropic decompositions in \Cref{indecomposable cod 2 roots give iso decomposition}. 
	
	In this subsection, we examine the deformations $\M_\lambda^0(v,w)$ of $\M_0^0(v,w)$ where $\lambda$ is \textit{subgeneric}, i.e. very close to being generic. In particular, we examine when such a deformation have a codimension 2 symplectic leaf. We continue to assume \Cref{best convention on tilde v}. 
	
	\subsection{The set $\Sigma_{\Tilde{\lambda}}$ for subgeneric $\lambda$}
	\begin{definition}
		Let $\lambda\in \mathfrak{p}$ and $v^1 \le v$ be a dimension vector in $\ZZ_{\ge0}^{Q_0}$. 
		\begin{enumerate}
			\item We say $v^1$ is \textit{indecomposable with respect to $\lambda$} if there is no proper decomposition $v^1 = \sum \beta_i$ where all $\beta_i$ are positive roots such that $\beta_i\cdot \lambda = 0$. 
			\item We say $\lambda$ is \textit{subgeneric for }$v$ if there is a unique (up to scalar) positive root $v^1$ of $Q$ such that $v^1<v$ and $\lambda\cdot v^1 = 0$. We just say $\lambda$ is \textit{subgeneric} when $v$ is clear from context.
		\end{enumerate}  
	\end{definition}
	Recall the definitions of $\tilde{v},\tilde{\lambda}$ in \Cref{equivalent description}. 
	If $\lambda$ is subgeneric and $v^1$ is indecomposable with respect to $\lambda$, then  $v^1\in\Sigma_{\Tilde{\lambda}}$ if we view it as an element of $\CC^{Q^\infty_0}$ whose $\infty$-component is 0.
	The following lemmas are used in the proof of \Cref{theorem cod 2 leaves in deformed quiver varietyes}.
	\begin{lemma}{\label{Lemma roots nv1 in Sigma for subgeneric lambda}}
		Let $\lambda$ be subgeneric and $v^1$ be the indecomposable root with respect to $\lambda$. 
		Let $n\in \ZZ_{>0}$. 
		\begin{enumerate}
			\item If $v^1$ is a real root, then $nv^1\in\Sigma_{\Tilde{\lambda}}$ if and only if $n=1$. 
			\item If $v^1$ is an isotropic imaginary root, i.e. $(v^1,v^1) = 0$, then $nv^1\in\Sigma_{\Tilde{\lambda}}$ if and only if $n=1$. 
			\item  If $v^1$ is a non-isotropic imaginary root, i.e. $(v^1,v^1) \le -2$, then $nv^1\in\Sigma_{\Tilde{\lambda}}$ for all $n$.
			
		\end{enumerate}
		
	\end{lemma}
	\begin{proof}
		The assumption on $v^1$ guarantees that for $n\ge 2$, any decomposition of $nv^1$ in (2) of \Cref{CB simple dimension criteria} has the form $nv^1 = \sum_{i=1}^k n_iv^1$ where $\sum_{i=1}^k n_i = n$.

		The first two parts follow easily from \Cref{CB simple dimension criteria}. 
		Let $v^1$ be a non-isotropic imaginary root.
		We have \[p(nv^1) = 1-\frac{n^2}{2}(v^1,v^1),\ \  \sum_{i=1}^k p(n_iv^1) = k-\sum_{i=1}^k \frac{n_i^2}{2}(v^1,v^1).\] 
		Since $(v^1,v^1) \le -2$, it is clear that, for $k\ge 2$ and $\sum_{i=1}^k n_i=n$,
		\begin{equation}\label[ineq]{inequality useful for non isotropic p}
			1-\frac{n^2}{2}(v^1,v^1) > k-\sum_{i=1}^k \frac{n_i^2}{2}(v^1,v^1),
		\end{equation}
		i.e. $p(nv^1) > \sum p(n_iv^1)$, 
		which proves part (3) of the lemma. 
	\end{proof}
	If $\lambda$ is subgeneric for $v$ and $v^1$ is the indecomposable root with respect to $\lambda$, then any $u \in \Sigma_{\tilde{\lambda}}$ satisfying $u\le \tilde{v}$ have the form $ u= nv^1$ or $ u= \tilde{v}-nv^1$ for some $n\ge 0$. The following lemma gives some conditions for  $\tilde{v} - nv^1 \in \Sigma_{\Tilde{\lambda}}$. 
	
	\begin{lemma}{\label{Lemma roots tildev-nv1 for subgeneric lambda}}
		Assume $\M_0^\theta(v,w)$ is nonempty for generic $\theta$. Let $\lambda\in\mathfrak{p}$ be subgeneric for ${v}$, and $v^1$ be the unique indecomposable positive root such that $v^1 \le v$ and $v^1\cdot \lambda = 0$ (so that $v^1\in\Sigma_{\Tilde{\lambda}}$). 
		Let $n\in \ZZ_{\ge 0}$. 
		\begin{enumerate}[(1)]
			\item Suppose $v^1$ is a real root. Then $\tilde{v}- nv^1 \in\Sigma_{\Tilde{\lambda}}$ if and only if $\tilde{v}- nv^1$ is a root and $(\tilde{v}- nv^1,v^1)\le 0$. 
			\item Suppose $v^1$ is an isotropic imaginary root. If $(\tilde{v},v^1) = 0$ or $-1$, then $\tilde{v}-nv^1 \in \Sigma_{\Tilde{\lambda}}$ if and only if $n$ is the largest integer such that $\tilde{v} - nv^1$ is a root. If $(\tilde{v},v^1) \le -2$, then $\tilde{v} \in \Sigma_{\Tilde{\lambda}}$.
			\item Suppose $v^1$ is a non-isotropic imaginary root, and let $N$ be the largest integer such that $\tilde{v}-Nv^1$ is a root. Then $\tilde{v}-nv^1 \in \Sigma_{\Tilde{\lambda}}$ if and only if $n=N$ or $p(\tilde{v}-nv^1)>p(\tilde{v}-Nv^1)+p((N-n)v^1)$. 
		\end{enumerate}
	\end{lemma}
	\begin{proof}
		\begin{enumerate}[(1)]
			\item Let $v^1$ be a real root. Assume $\tilde{v}-nv^1$ is a root, and $(\Tilde{v} - nv^1,v^1) = c > 0$. Let $s$ be the reflection along the real root $v^1$. Then $s(\Tilde{v} - nv^1) = \Tilde{v} - (n+c)v^1$ is a root. 
			But since $(\tilde{v} - nv^1,\tilde{v} - nv^1 ) = (s(\tilde{v} - nv^1),s(\tilde{v} - nv^1))$, and $p(v^1) = 0$,  we have 
			\[p(\tilde{v}-(n+c)v^1) + c p(v^1) = p(\Tilde{v} - nv^1).\]
			Therefore $\tilde{v}-nv^1 \not\in \Sigma_{\tilde{\lambda}}$. 
			
			Conversely, suppose $(\tilde{v}-nv^1,v^1)\le 0$ and $\tilde{v}-nv^1$ is a root. Let $(\tilde{v} - nv^1 - lv^1) + v^1 + ... + v^1$ be a decomposition of $\tilde{v}-nv^1$, $l>0$, and note that any decomposition in (2) of \Cref{CB simple dimension criteria} has this form. We have
			\begin{align*}
				p(\tilde{v} - nv^1 - lv^1) + lp(v^1) &= p(\tilde{v}-nv^1) + (\tilde{v}-nv^1,lv^1) - \frac{1}{2}l^2(v^1,v^1) + 0\\
				&=p(\tilde{v}-nv^1) + l(\tilde{v}-nv^1,v^1) - l^2\\
				&< p(\tilde{v} - nv^1).
			\end{align*}
			Therefore $\tilde{v} - nv^1 \in \Sigma_{\tilde{\lambda}}$.
			
			\item Let $v^1$ be an isotropic imaginary root. Suppose $(\tilde{v},v^1) = 0$ or $-1$. If $n$ is the largest integer such that $\tilde{v} - nv^1$ is a root then clearly $\tilde{v} - nv^1\in\Sigma_{\Tilde{\lambda}}$, since condition (2) of \Cref{CB simple dimension criteria} is automatic. 
			Conversely, suppose there is $m>n$ such that $\tilde{v} - mv^1$ is a root. 
			Then 
			\begin{align*}
				p(\tilde{v} - nv^1) &= p(\tilde{v} - mv^1 + (m-n) v^1) \\
				&= p(\tilde{v} -mv^1) - (m-n)(\tilde{v} - mv^1,v^1) -0\\
				& \le p(\tilde{v} - mv^1) + (m-n) \\
				&= p(\tilde{v} - mv^1) +(m-n)p(v^1).
			\end{align*} Therefore, $\tilde{v} - nv^1\not\in\Sigma_{\Tilde{\lambda}}$.
			This proves the first half of (2). 
			
			Now assume $(\tilde{v},v^1) \le -2$. For any $n\ge 1$ and any decomposition $\tilde{v} = (\tilde{v}-nv^1)+n_1v^1+\ldots+n_kv^1$ where $\sum_{i=1}^k n_i= n$, 
			\[p(\tilde{v} - nv^1) + \sum_i p(n_i v^1) = p(\tilde{v}) + n (\tilde{v},v^1) + k < p(\tilde{v}).\] 
			Since $\tilde{v}$ is a root by \Cref{Nonempty smooth quiver variety implies root}, $\tilde{v}\in \Sigma_{\Tilde{\lambda}}$. 
			
			\item It is clear from definition that $\tilde{v}-Nv^1\in\Sigma_{\Tilde{\lambda}}$, and if $\tilde{v}-nv^1\in \Sigma_{\Tilde{\lambda}}$ then $p(\tilde{v}-nv^1)>p(\tilde{v}-Nv^1)+p((N-n)v^1)$. 
			
			Conversely, suppose $n<N$ and $p(\tilde{v}-nv^1)>p(\tilde{v}-Nv^1)+p((N-n)v^1)$. By \Cref{When is p(a+b)>p(a)+p(b)}, this inequality holds if and only if $- (N-n)(\tilde{v}-Nv^1,v^1)>1$, which implies $(\tilde{v}-Nv^1,v^1) \le -1$. 
			
			Let $N>m>n$ be such that $\tilde{v}-mv^1$ is a root. 
			Note that 
			\[(\tilde{v}-mv^1,v^1) = (\tilde{v}-Nv^1,v^1)+(N-m)(v^1,v^1) < (\tilde{v}-Nv^1,v^1) \le -1\]
			and therefore $- (m-n)(\tilde{v}-mv^1,v^1)>1$. So $p(\tilde{v}-nv^1)>p(\tilde{v}-mv^1)+p((m-n)v^1)$ by \Cref{When is p(a+b)>p(a)+p(b)}. Thanks to \Cref{inequality useful for non isotropic p}, we conclude $\tilde{v}-nv^1\in \Sigma_{\Tilde{\lambda}}$. 
		\end{enumerate}
	\end{proof}
	\subsection{Codimension 2 roots}
	In this subsection we investigate for which subgeneric $\lambda \in \mathfrak{p}$ the variety $\M_\lambda^0(v,w)$ have a codimension 2 symplectic leaf. Let $v^1\in \Sigma_{{\lambda}}$ be the minimal positive root such that $\lambda \cdot v^1 = 0$. 
	
	\begin{theorem}{\label{theorem cod 2 leaves in deformed quiver varietyes}}
		The affine variety $\M_\lambda^0(v,w) = \M_{\tilde{\lambda}}^0(\tilde{v})$ has a codimension 2 symplectic leaf if and only if one of the following conditions hold: 
		
		\begin{enumerate}
			\item $v^1$ is a real root,  $\<\nu, v^1\> = 0$, and $\tilde{v} - v^1$ is a root of ${Q^\infty}$.
			\item $v^1$ is an isotropic imaginary root, $\<\nu ,v^1\>=2$, and $ \tilde{v} - v^1$ is a root of ${Q^\infty}$.  
			\item $v^1$ is a non-isotropic imaginary root, $\tilde{v}-n v^1$ is a root for $n=1$ or $2$,  $(\tilde{v} - nv^1, nv^1) = -2$, and $\tilde{v}-mv^1$ is not a root for any $m> n$.
		\end{enumerate}
	\end{theorem}
	
	\begin{definition}\label{definition codimension 2 root}
		We say an indecomposable root $v^1$ of $Q$ is a \textit{codimension 2 root} if it satisfies one of the conditions in \Cref{theorem cod 2 leaves in deformed quiver varietyes}. We say it is of type $(i)$ if it falls in Case $(i)$ in \Cref{theorem cod 2 leaves in deformed quiver varietyes}, $i=1,2,3$. 
	\end{definition}
	
	The following lemma and its corollary on imaginary roots are useful. It was proved for quivers without loops, but the proof works for quivers with loops as well, thanks to \cite[Lemma 5,Lemma 6]{kavc1980some}.

	\begin{lemma}[{\cite[Theorem 2]{morita1980roots}}]{\label{lemma imaginary root}}
		Let $Q$ be a quiver and $\alpha = \sum n_i\alpha_i\neq 0, n_i\in\ZZ_{\ge 0}$. Then $\alpha$ is an imaginary root if and only if for any $\sigma\in W_Q$, we have $\sigma(\alpha) > 0$ and $\Supp(\alpha)$ is connected. \qed
	\end{lemma}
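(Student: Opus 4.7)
The plan is to identify both conditions in the lemma with membership in the $W_Q$-orbit of Kac's fundamental domain $K$, defined as the set of nonzero $\beta=\sum_{i\in Q_0} m_i\alpha_i$ with $m_i\in\ZZ_{\ge 0}$, $\Supp(\beta)$ connected, and $(\beta,\alpha_j)\le 0$ for every simple root $\alpha_j$ at a loopless vertex. The classical input, due to Kac \cite{kavc1980some} (with loop vertices handled by Lemmas 5 and 6 of loc.\ cit.), is that the set of positive imaginary roots equals $W_Q\cdot K$, and that every element of $K$ is itself a positive imaginary root.

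For the forward direction, suppose $\alpha$ is imaginary. Since $n_i\ge 0$ and $\alpha\ne 0$, $\alpha$ is a positive imaginary root. The Weyl group $W_Q$ preserves the set of imaginary roots and cannot flip the sign of an imaginary root (simple reflections only flip real roots), so $\sigma(\alpha)>0$ for every $\sigma\in W_Q$. Writing $\alpha=\tau(\beta)$ with $\beta\in K$, the connectedness of $\Supp(\alpha)$ reduces to a support-tracking argument by induction on the length of $\tau$: each simple reflection $s_i$ either leaves $\Supp$ unchanged, or adjoins $i$ (which must then be adjacent to the current support for the coefficient of $\alpha_i$ to become nonzero), or removes $i$ from $\Supp$; in the last case a direct inspection using the connectedness of the preceding support rules out the creation of a disconnection.

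For the backward direction, assume $\sigma(\alpha)>0$ for every $\sigma\in W_Q$ and that $\Supp(\alpha)$ is connected. Choose $\sigma_0\in W_Q$ minimizing the height of $\sigma(\alpha)$, and set $\alpha'=\sigma_0(\alpha)$. For every loopless vertex $i$, the hypothesis forces $s_i\alpha'$ to be positive of height at least that of $\alpha'$, hence $(\alpha',\alpha_i)\le 0$. At vertices carrying a loop, the analogous inequality holds by \cite[Lemma 5, Lemma 6]{kavc1980some}. Connectedness of $\Supp(\alpha')$ follows from the same inductive support-tracking applied to $\sigma_0$. Therefore $\alpha'\in K$, so $\alpha'$ is a positive imaginary root, and consequently $\alpha=\sigma_0^{-1}(\alpha')$ is imaginary as well.

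The principal difficulty is the bookkeeping of supports under repeated Weyl reflections, specifically ruling out that a simple reflection which drops a vertex from the support disconnects what remains. The cited Kac lemmas, introduced to accommodate loop vertices where no reflection is available, ensure the loop case slots into the same framework via the corresponding positivity/height inequality at such vertices without requiring a separate argument.
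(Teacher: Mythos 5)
The paper offers no proof of this lemma at all --- it is quoted from \cite[Theorem 2]{morita1980roots} with the remark that \cite[Lemmas 5, 6]{kavc1980some} handle loop vertices --- so the real question is whether your sketch is a valid standalone argument. Your skeleton (identify $\Delta^{im}_+$ with $W_Q\cdot K$ for the fundamental region $K$) is the standard and correct one, but the step you wave off as ``direct inspection,'' namely that connectedness of support propagates along a $W_Q$-orbit of positive vectors, is false, and it is exactly where the content of the lemma sits. Already in type $A_3$ the reflection $s_2$ sends $\alpha_1+2\alpha_2+\alpha_3$ (positive, connected support) to $\alpha_1+\alpha_3$ (still positive, disconnected support). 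Worse, the gap is fatal to the literal statement you set out to prove: let $Q$ be two disjoint copies of the Kronecker quiver, with minimal imaginary roots $\delta_1,\delta_2$, joined through one extra vertex $k$ attached by a single edge to one vertex of each copy, and set $\alpha=s_k(\delta_1+\delta_2)=\delta_1+\delta_2+2\alpha_k$. Then $\Supp(\alpha)=Q_0$ is connected, and $\sigma(\alpha)>0$ for every $\sigma\in W_Q$ because $\delta_1+\delta_2$ pairs non-positively with every simple root, so its orbit lies in $\delta_1+\delta_2+\sum_i\ZZ_{\ge0}\alpha_i$; yet $\alpha$ is not an imaginary root, since the unique antidominant element $\delta_1+\delta_2$ of its orbit has disconnected support, so the orbit never meets $K$.

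The statement is only true with the connectedness condition quantified over the orbit, i.e.\ ``$\Supp(\sigma(\alpha))$ is connected for every $\sigma$,'' which is evidently the intended reading and is how the paper applies it (in the proof of \Cref{sum of imaginary root is root} connectedness of $\Supp(w(v+\alpha))$ is checked for every $w$). Under that reading your backward direction closes immediately: the minimal-height representative $\alpha'$ has connected support by hypothesis, and your verification that $(\alpha',\alpha_i)\le0$ at every vertex (at a loop vertex this is automatic, since every entry of the Tits form involving such a vertex is nonpositive) puts $\alpha'$ in $K$. But the forward direction still requires the genuinely nontrivial fact that every element of $W_Q\cdot K$ has connected support; this does not follow from tracking supports one reflection at a time, and it is precisely the input supplied by \cite[Lemmas 5, 6]{kavc1980some} (or by Kac's identification of positive roots with dimension vectors of indecomposable representations). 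You need to either cite that fact, as the paper implicitly does, or prove it; as written your proposal does neither.
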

	
	\begin{corollary}{\label{sum of imaginary root is root}}
		Suppose $v$ is a positive root, $\alpha$ is a positive imaginary root, and $(v,\alpha)\le -1$, then $v+\alpha$ is a positive imaginary root. 
	\end{corollary}
	\begin{proof}
		If $v$ is real, then the $v$-string through $\alpha$ contains $\alpha+v$ since $(v,\alpha)\le -1$. Since $(v+\alpha,v+\alpha) = (v,v) + 2(v,\alpha) + (\alpha,\alpha) \le 0$,  $v+\alpha$ must be an imaginary root. 
		
		If $v$ is imaginary, then for any $w\in W_Q$, $w.\alpha$ and $w.v$ are positive with connected support. Since $(w.\alpha, w.v) = (\alpha,v) \neq 0$, we conclude $w.(v+\alpha)$ has connected support. Therefore, $v+\alpha$ is an imaginary root by \Cref{sum of imaginary root is root}. 
	\end{proof}
	
	\begin{proof}[Proof of \Cref{theorem cod 2 leaves in deformed quiver varietyes}]
		We use \Cref{CB strata dimension formula} to search for codimension 2 strata. Recall, \Cref{CB strata dimension formula}, that $d(\tau)$ denotes the dimension of the stratum associated to a representation type $\tau$. Moreover, by the assumption $\tilde{v} \in \Sigma_0$, we see $\tilde{v}\in\Sigma_{\tilde{\lambda}}$ for all $\lambda\in\mf{p}$. Therefore, $\dim \M_\lambda^0(v,w) = 2p(\tilde{v})$. 
		\begin{enumerate}
			\item If $v^1$ is real, then by \Cref{Lemma roots nv1 in Sigma for subgeneric lambda}, any possible representation type has the form $\tau_n = (\Tilde{v}- nv^1,1;v^1,n)$. We have
			\[d(\tau_n) =
			2(p(\Tilde{v} - nv^1) + p(v^1)) 
			= 2( p(\Tilde{v}) + n(\Tilde{v}, v^1) - n^2).\] 
			Note this formula holds for $n=0$ as well. 
			Then $\dim \M_\lambda^0(v,w)-d(\tau_n) = 2n^2-2n(\tilde{v},v^1)$. If the leaf associated to $\tau_n$ has codimension 2 if and only if $\Tilde{v}- nv^1 \in\Sigma_{\tilde{{\lambda}}}$ and $n(n-(\tilde{v} , v^1)) = 1$, i.e. 
			\[n=1,(\tilde{v} , v^1)=0. \]
			Conversely, if $(\tilde{v},v^1) = 0$ and $\tilde{v}-v^1$ is a root, then by \Cref{Nonempty smooth quiver variety implies root} and \Cref{Lemma roots tildev-nv1 for subgeneric lambda}, $\tilde{v}-v^1\in \Sigma_{\Tilde{\lambda}}$. This proves case (1). 
			
			\item Suppose $v^1$ is an isotropic imaginary root, i.e. $(v^1,v^1) = 0$. By \Cref{Lemma roots nv1 in Sigma for subgeneric lambda}, $nv^1 \in \Sigma_{\Tilde{\lambda}}$ if and only if $n=1$. Therefore, any valid representation type has the form $\tau = (\tilde{v}-nv^1,1;v^1,n_1;...;v^1,n_k)$ where $n = n_1+ ... + n_k$. By \Cref{CB strata dimension formula}, the dimension of the stratum corresponding to $\tau$ is
			\begin{equation}{\label{dim formula for isotropic strata}}
				d(\tau) =2(p(\tilde{v} - nv^1) + k) = 2(p(\tilde{v}) +n(\tilde{v}, v^1)+k).
			\end{equation}
			Therefore, for fixed $n$, the dimension of stratum is maximal when $k=n$ and all $n_k=1$; in this case, $d(\tau) = 2p(\tilde{v}) + 2n(1+(\tilde{v},v^1))$.
			The dimension difference is 
			\[\dim \M_\lambda^0(v,w) - d(\tau) = -2n(1+(\tilde{v},v^1)).\]
			The difference is 2 if and only if 
			\[n=1, (\tilde{v},v^1) = -2. \]
			This proves part (2) of the theorem. 

			\item Suppose $v^1 \in \Sigma_{\tilde{\lambda}}$ is a non-isotropic imaginary root, i.e. $(v^1,v^1) \le -2$. 
			By \Cref{Lemma roots nv1 in Sigma for subgeneric lambda}, $nv^1 \in \Sigma_{\tilde{\lambda}}$ for all integers $n\ge 1$. 	
			Any representation type has the form $\tau = (\tilde{v}-nv^1,1; n_1v^1,m_1;...;n_kv^1,m_k)$ where $\tilde{v} - nv^1\in\Sigma_{\Tilde{\lambda}}$ and $\sum_i m_in_i = n$. The dimension of the corresponding stratum is \[d(\tau) = 2(p(\tilde{v}-nv^1)+\sum_ip(n_iv^1)) = 2p(\tilde{v}-nv^1)+2k-\sum_i n_i^2 (v^1,v^1). \]For fixed $n$, the dimension is maximal when $k=1$. 
			For $n\ge1$, we denote 
			\[\tau_n = (\tilde{v} - nv^1,1;nv^1,1),\] 
			so that 
			\begin{equation}{\label{dtaun for non isotropic v1}}
				d(\tau_n) = 2(p(\tilde{v}) + n(\tilde{v},v^1) + 1 - n^2 (v^1,v^1) ).
			\end{equation}
			The dimension difference is 
			\[\dim \M_\lambda^0(v,w) - d(\tau_n) = -2(n(\tilde{v},v^1) + 1 - n^2 (v^1,v^1)).\]
			The difference is 2 if and only if $n(\tilde{v}-nv^1,v^1) = -2$.
			Therefore, either $n=1$ and $(\tilde{v}-v^1,v^1)= -2$; or $n=2$ and $(\tilde{v}-2v^1,2v^1)=-2$. 
			
			In either case, if $\tilde{v} - m v^1$ is a root for some $m>n$, then it is easy to see $d(\tau_m)>d(\tau_n)$, contradicting that $\tilde{v}\in\Sigma_{\tilde{{\lambda}}}$. 
			This proves part (3) of the theorem. 
		\end{enumerate}
	\end{proof}
	
	\begin{example}{\label{examples of theorem 3.1}}
		We give an example of each of the above cases. 
		\begin{enumerate}
			\item Let $Q$ be the type $A_1$ quiver, with one vertex and no loops. Let $v = 1,w=2$. Let $\alpha$ be the simple root. The variety $\M_0(v,w) =\Spec(\CC[T^*\PP^1]) \cong \mc{N}$, the nilpotent cone of $\sl_2$. We can take $v^1 = \alpha$. This is an example of case (1). 
			\item Let $Q$ be the type $\hat{A}_2$ quiver, $v = \delta$ and $w$ be $2$ at the extending vertex, $0$ at the other two vertices. Then $v^1 = \delta$ is an example of case (2). 
			\item Let $Q$ be the following quiver, $v = \alpha + 2\beta$ and the framing be 1 over $\beta$. 
			\begin{center}
				\begin{tikzpicture}[scale=0.9]
					\filldraw[black] (1,0) circle (1.5pt);
					\filldraw[black] (2,0) circle (1.5pt);
					\node at (2,0) (alpha3) {$\ $};
					\draw[-,thick] (1.1,0)--(1.9,0);
					\draw[-,thick] (2,0.1)--(2,0.9);
					\draw [-,thick] (2.1,-0.15)arc(-160:160:0.5 and 0.3);
					\node at (1,-0.4) {$\alpha$};
					\node at (2,-0.4) {$\beta$};
					\node at (2,1.3) [rectangle,draw]  {1};
				\end{tikzpicture}
			\end{center}
			Then, $v^1 = \alpha + 2\beta$ is an example of case (3) with $n=1$. Note that this is a special case of \Cref{Bk example} for $k=0$. 
			\item Let $Q$ be the quiver with 1 vertex and 2 edge loops; let $v = 2, w = 1$. Then $v$ is an example of case (3) with $n=2$. 

		\end{enumerate}
	\end{example}
	
	\begin{remarklabeled}{\label{remark that theorem cod 2 leaves in deformed quiver varietyes is bs cor}} 
		\Cref{theorem cod 2 leaves in deformed quiver varietyes} is an example of \Cref{BS isotropic decomposition}. In each case, the slice quiver $\underline{Q}^\infty$ (without edge loops) for a codimension 2 leaf is type $\hat{A}_1$, and the dimension vector $\tilde{\underline{v}}$ is $(1,1)$, the minimal imaginary root. Moreover, it is clear from the proof that in each case $\M_\lambda^0(v,w)$ has exactly one codimension 2 leaf. 
	\end{remarklabeled}
	
	\subsection{Restriction of tautological line bundles}
	Let $v^1$ be a codimension 2 root of $Q$, $\lambda$ a subgeneric parameter such that $\lambda\cdot v^1 = 0$. The variety $\M_\lambda^0(v,w)\cong \M_\lambda(v,w)$ has a unique codimension 2 symplectic leaf $\mc{L}$. Let $x\in\mc{L}$ and consider the natural map $\rho: \M^\theta_\lambda(v,w) \to \M_\lambda^0(v,w) $. 
	\begin{lemma}\label{lemma degree of line bundle on slice P1}
		Assume $\theta\cdot v^1 > 0$. Then the fiber $\rho\inverse(x) $ is identified with $\PP^1$, and the restriction of $\mathcal{O}(\chi,\M^\theta_\lambda)$ to $\rho\inverse(x) $ has degree $\chi\cdot v^1$. 
	\end{lemma}
	\begin{proof}
		The quiver $\underline{Q}$ defined with respect to $x\in \mathcal{L}$ in \Cref{Structure of neighbourhood} can be viewed as a type $A_1$ quiver, with dimension vector 1 and framing 2. Therefore, the fiber is $\rho\inverse(x)\cong \underline{\rho}\inverse(0) \cong \PP^1$ is the zero section of $T^*\PP^1$. 
		Thanks to \Cref{analytic local structure theorem}, we can pick an analytic neighbourhood $U,\underline{U}$ as in the proof of \Cref{restricting tautological line bundle to kleinian singularity resolution},
		so that the restriction of  $\mathcal{O}(\chi,\M^\theta_\lambda)$ to $U \cong \underline{U}$ is \[(R_0 \times \mu_H\inverse(0)^{\theta-ss}\times^H \CC_{\chi|_H})|_{\underline{U}}.\] 
		We have $H \cong \CC^*$ acting on $\CC$ by the character $\chi\cdot v^1$. Therefore, the restriction of  $\mu_H\inverse(0)^{\theta-ss}\times^H \CC_{\chi|_H}$ to $\underline{\rho}\inverse(0) \cong\PP^1$ is the line bundle $\sheafO(\chi\cdot v^1)$. 
	\end{proof}
	We remark that if we assumed $\theta\cdot v^1<0$ instead, then the restriction of $\mathcal{O}(\chi,\M^\theta_\lambda)$ to $\rho\inverse(x) $ has degree $-\chi\cdot v^1$. 
	
	\section{Codimension 2 leaves in deformations of conical symplectic singularities}{\label{Codimension 2 leaves in deformed conical symplectic singularity}}
	Let $X$ be a conical symplectic singularity and  $\pi: \mc{X} \to \cartanh_X/W$ be the universal graded Poisson deformation as defined in \Cref{Deformation of conical symplectic singularities}. 
	Let $\lambda\in \cartanh_X$ and $\bar{\lambda}$ be its image in $\cartanh_X/W$. We investigate when does $\mc{X}_\lambda = \pi\inverse(\bar{\lambda})$ have a codimension 2 symplectic leaf. 
	
	\subsection{Deformation of Kleinian singularities}{\label{subsection Deformation of Kleinian singularities}}
	In this subsection we classify the deformation parameters of a Kleinian singularity that gives codimension 2 symplectic leaves in the deformed variety. We follow \cite{crawley1998noncommutative}. 
	
	Let $\Gamma \subset \SL_2(\CC)$ be a finite subgroup. Let $Q$ be the corresponding McKay quiver, which is an affine type quiver. Its vertex set $Q_0$ is in bijection with $\{N_i\}$, the set of simple representations of $\Gamma$.
	Let $\delta$ be the minimal positive imaginary root of this quiver. Then the quiver variety $\M_0^0(Q,\delta)$ is isomorphic to $X = \CC^2/\Gamma$. 
	
	Let $\lambda \in (\CC\Gamma)^\Gamma$ (where $\Gamma$ acts on $\CC\Gamma$ by conjugation). We can identify $\lambda $ with $(\tr \lambda|_{N_i}) $ and view it as an element of $\mathfrak{p}$. Define the algebra $\mathscr{I}^\lambda = (\CC\<x,y\>\# \Gamma )/(xy-yx-\lambda)$ where $\#$ denotes the skew group algebra. Then define  $\mathscr{O}^\lambda = e\mathscr{I}^\lambda e$ where $e = \frac{1}{|\Gamma|} \sum_{\gamma \in \Gamma} \gamma$. 
	
	\begin{theorem}[{\cite[Theorem 0.2]{crawley1998noncommutative}}]{\label{CBH0.2}}
		Suppose $\lambda\cdot \delta = 0$, then the algebra $\mathscr{O}^\lambda$ is commutative, and $\Spec (\mathscr{O}^\lambda) = \M_\lambda^0(Q,\delta)$. 
	\end{theorem}
	We say a root $\alpha$ of $Q$ is a \textit{Dynkin root} if $N_0\not\in \Supp(\alpha)$. 
	\begin{theorem}[{\cite[Theorem 0.4]{crawley1998noncommutative}}]{\label{CBH0.4}}
		If $\lambda\cdot \delta = 0$, then the global dimension of $\mathscr{O}^\lambda$ is finite if and only if $\lambda\cdot \alpha \neq 0$ for any Dynkin root $\alpha$ of $Q$.
	\end{theorem}
	The following corollary now follows from the fact that a Noetherian local ring is regular if and only if its global dimension is finite. 
	\begin{corollary}\label{Corollary of CBH0.4 on smoothness}
		Suppose $\lambda\cdot \delta = 0$, then $\M_\lambda^0(Q,\delta)$ is smooth if and only if $\lambda \cdot \alpha \neq 0$ for any Dynkin root $\alpha$ of $Q$. \qed 
	\end{corollary}
	
	Let $\g$ be the simple Lie algebra whose Dynkin diagram is $Q_{\fin}$ (that is $Q$ without the extended vertex), $\cartanh \subset \g$ be a Cartan subalgebra, and $W$ be the Weyl group of $\g$. 
	Slodowy showed in \cite[Section 1.5, Theorem 1]{slodowy1980four} that $\pi: S \hookrightarrow \g \twoheadrightarrow \g \gitquo G \cong \cartanh/W$ is the universal graded Poisson deformation of $\CC[X]$, where $S$ is the \textit{Slodowy slice} to a subregular nilpotent orbit of $\g$.  
	
	Let $\mu: T^*R(Q,\delta,0) \to \g$ be the moment map. We have an identification $\cartanh = \{\lambda\in \mathfrak{p}|\lambda\cdot \delta = 0\}$. By remarks at the end of \cite[Section 8]{crawley1998noncommutative}, the map $\mu: \mu\inverse(\cartanh) \to \cartanh$ is obtained from $\pi: S \to \cartanh/W$ by pulling back through the quotient $\cartanh \to \cartanh/W$. Therefore, by \Cref{Corollary of CBH0.4 on smoothness}, we have
	\begin{corollary}{\label{Kleinian singularity codim 2 leaf iff root hyperplane}}
		Suppose $\lambda\in\cartanh$, and $\bar{\lambda}$ is its image in $\cartanh/W$. Then $\pi \inverse (\bar{\lambda})$ is not smooth (i.e. has a codimension 2 leaf) if and only if $\lambda \cdot \alpha = 0$ for some Dynkin root of $Q$. \qed 
	\end{corollary}
	

	\subsection{General conical symplectic singularities}
	In this subsection we describe the conditions on $\lambda \in\cartanh_X$ for $\mc{X}_\lambda =\pi\inverse(\bar{\lambda})$ to have a codimension 2 symplectic leaf. 
	
	Let $\mc{X}_{\CC\lambda} = \pi \inverse (\CC\bar{\lambda})$. We have a graded Poisson morphism $\pi|_{\mc{X}_{\CC\lambda}}: \mc{X}_{\CC\lambda} \to \CC\lambda = \Spec \CC[t]$. Therefore, each fiber of $\pi|_{\mc{X}_{\CC\lambda}}$ is a Poisson subvariety. 
	Let $Z \subset \mc{X}_\lambda$ be a closed Poisson subvariety of dimension $d$. Let $Y = \overline{\CC^* \cdot Z}$ and consider $Y \cap \mc{X}_0$. The following result is standard. 
	
	\begin{proposition}
		$Y \cap \mc{X}_0$ is a $d$-dimensional Poisson subvariety of $X$. \qed 
	\end{proposition}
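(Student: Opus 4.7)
The plan is to verify the two assertions---Poissonness and dimension of $Y \cap \mc{X}_0$---separately, both following from standard reasoning about conical symplectic deformations. First I would show that $Y$ itself is a Poisson subvariety of $\mc{X}_{\CC\lambda}$. Since the $\CC^*$-action on $\mc{X}$ rescales the symplectic form with positive weight, it acts on the Poisson bracket by a character, so for each $t \in \CC^*$ the translate $t \cdot Z \subset \mc{X}_{t\lambda}$ is a Poisson subvariety. Their union $\CC^* \cdot Z$ is therefore a Poisson subset of $\pi\inverse(\CC^*\lambda)$, and the Zariski closure of a Poisson subscheme inside a Poisson variety is again Poisson, so $Y$ is Poisson in $\mc{X}_{\CC\lambda}$ (hence in $\mc{X}$).

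Next, I would argue that $Y \cap \mc{X}_0$ is Poisson. The base $\cartanh_X/W$ carries the trivial Poisson structure and $\pi$ is a Poisson map, so any function pulled back from the base is a Casimir on $\mc{X}$. In particular, the linear coordinate $t$ on $\CC\lambda$ pulls back to a Casimir on $Y$ whose zero locus is precisely $Y \cap \mc{X}_0$, and the vanishing locus of a Casimir inside a Poisson variety always inherits a Poisson structure.

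For the dimension count, the projection $\pi|_{\CC^* \cdot Z}: \CC^* \cdot Z \to \CC^*\lambda$ is a $\CC^*$-equivariant surjection whose fibers are translates of $Z$, giving $\dim Y = d+1$. The intersection $Y \cap \mc{X}_0$ is cut out inside $Y$ by the single equation $t=0$, so every irreducible component has dimension $\geq d$ by Krull's Hauptidealsatz; on the other hand $\pi(Y)$ is not contained in $\{0\}$, so the intersection is a proper closed subset of $Y$ and has dimension $\leq d$. Nonemptiness is ensured by the contracting $\CC^*$-action on $\mc{X}$, since for any $z \in Z$ the limit $\lim_{t \to 0} t \cdot z$ exists in $\mc{X}_0$ and lies in $Y$. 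The only point requiring any care is the $\CC^*$-twisted Poissonness of the closure, which can be formalized at the level of defining ideals once one notes that the Poisson bivector on $\mc{X}$ is $\CC^*$-equivariant of a fixed weight; otherwise the argument is a direct application of fiber dimension and the Casimir property of base coordinates.
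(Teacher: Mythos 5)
Your argument is correct and is essentially the standard one that the paper invokes without writing out: Poissonness of $Y$ from $\CC^*$-equivariance of the Poisson bivector together with the fact that (radicals of) Poisson ideals and closures of Poisson subsets remain Poisson, Poissonness of $Y\cap\mc{X}_0$ from the pulled-back base coordinate being a Casimir, and the dimension count from Krull's principal ideal theorem plus existence of $\CC^*$-limits (which uses that $\CC[\mc{X}]$ is non-negatively graded with positive weights on the base). The only point worth tightening is to run the bound $\dim(Y\cap\mc{X}_0)\le d$ component by component when $Z$ is reducible: each irreducible component of $Y$ has the form $\overline{\CC^*\cdot Z_i}$, dominates $\CC\bar{\lambda}$, hence is not contained in $\mc{X}_0$ and meets it in a proper closed subset of dimension $\dim Z_i$, which combined with your Krull estimate and the nonemptiness from the contracting action gives exactly $d$.
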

	
	\begin{corollary}{\label{contract to union of codimension 2 leaf}}
		If $Z \subset \mc{X}_\lambda$ is a codimension $2$ symplectic leaf, then $Y \cap \mc{X}_0$ is the union of the closures of some codimension $2$ symplectic leaves of $X$. 
		\qed 
	\end{corollary}
	\begin{proposition}{\label{codimension 2 leaf deformation has root hyperplane parameters}} 
		
		If $\lambda \in \cartanh_X$, then $\mc{X}_\lambda$ has a codimension 2 symplectic leaf if and only if $\lambda$ lies in a root hyperplane of the Namikawa-Weyl group $W$. 
	\end{proposition}
	\begin{proof}
		We may assume $X$ has at least one codimension 2 symplectic leaf. Otherwise, the proposition is vacuous by \Cref{Ivan's decomposition of cartan space}. 
		
		Let $\mc{L}_i$ be a codimension 2 symplectic leaf of $X$ and $\Sigma_i$ be the slice Kleinian singularity at some $x\in\mc{L}_i$. 
		Let $\lambda_i$ be the projection of $\lambda$ to the Cartan space of $\Sigma_i$ in the decomposition in \Cref{Ivan's decomposition of cartan space}, viewed as an element of $\hat{\cartanh}_i$. 
		Let $\bar{\lambda}_i \in \hat{\cartanh}_i/\widehat{W}_i$ be its image in the quotient. 
		Let $\pi_i: S_i \to \hat{\cartanh}_i/\widehat{W}_i$ be the universal graded Poisson deformation of the Kleinian singularities $\Sigma_i$, and let $ S_{\CC\lambda_i} = \pi_i\inverse(\CC \bar{\lambda}_i) \subset S_i$.
		
		Let $\mf{m}_x$ be the maximal ideal of $\CC[\mc{X}_{\CC\lambda}]$ corresponding to $x$, and define \[(\mc{X}_{\CC\lambda})^{\wedge_x} = \Spec ((\mc{O}_{\mc{X}_{\CC\lambda},x})^{\wedge_{\mf{m}_x}}).\]
		Let $V$ be the tangent space $T_x\mc{L}_i$, and consider $V\times S_{\CC\lambda_i}$, viewed as a scheme over $\CC\lambda_i$. Let $\mf{m}_0$ be the maximal ideal corresponding to $0 \in V\times S_{\CC\lambda_i}$. Let \[(V\times S_{\CC\lambda_i})^{\wedge_0} = \Spec (\CC[V\times S_{\CC\lambda_i}]^{\wedge_{\mf{m}_0}}).\]
		Then $(\mc{X}_{\CC\lambda})^{\wedge_x} \cong (V\times S_{\CC\lambda_i})^{\wedge_0}$ (see \cite[Section 2.3]{losev2022deformations}). 
		
		Consider the completion of the sheaf of relative differentials $(\Omega_{\mc{X}_{\CC\lambda}/\CC\lambda})^{\wedge_x}$ (it is different from $\Omega_{(\mc{X}_{\CC\lambda})^{\wedge_x}/\CC\lambda}$), viewed as a sheaf on $(\mc{X}_{\CC\lambda})^{\wedge_x}$. 
		It only depends on the completed algebra $(\mc{O}_{\mc{X}_{\CC\lambda},x})^{\wedge_{\mf{m}_x}}$. 
		Similarly, we define the completion of the module of relative differentials $(\Omega_{(V\times S_{\CC\lambda_i})/\CC\lambda})^{\wedge_0}$, which only depends on $(V\times S_{\CC\lambda_i})^{\wedge_0}$. Therefore, we have
		\[(\Omega_{(V\times S_{\CC\lambda_i})/\CC\lambda})^{\wedge_0} \cong (\Omega_{\mc{X}_{\CC\lambda}/\CC\lambda})^{\wedge_x}.\tag{$\star$}\]
		
		By \Cref{contract to union of codimension 2 leaf}, $\mc{X}_\lambda$ has a codimension 2 symplectic leaf $Z$ if and only $\pi_\lambda:\mc{X}_{\CC\lambda} \to \CC\lambda $ is non-smooth along a codimension 2 closed Poisson subvariety (i.e. $\overline{\CC^* \cdot Z}$), which intersects $X$ at a union of closure of codimension 2 leaves.
		Equivalently, for some codimension 2 leaf $\mc{L}_i$ of $X$, there is a point $x\in \mc{L}_i$, and a (non-closed) point $\mf{p} $ of $\mc{X}_{\CC\lambda}$ whose closure contains $x$, such that $(\Omega_{\mc{X}_{\CC\lambda}/\CC\lambda})^{\wedge_x}$ has rank larger than $\dim X$ at $\mf{p}$ (viewed as a point in $(\mc{X}_{\CC\lambda})^{\wedge_x}$). 
		By ($\star$), this is equivalent to that $(\Omega_{(V\times S_{\CC\lambda_i})/\CC\lambda})^{\wedge_0}$ has rank larger than $\dim X$ at some non-closed point $\mf{q} \in V\times S_{\CC\lambda_i}$ whose closure contains $0$. This happens if and only if
		$V\times S_{\CC\lambda_i}$ has a codimension 2 symplectic leaf, which is further equivalent to  $\lambda_i$ lying on a root hyperplane of the Cartan space of $S_i$, by \Cref{Kleinian singularity codim 2 leaf iff root hyperplane}. Since $\pi_1({\mc{L}_i})$ act by diagram automorphisms, by restricting to $\cartanh_i = (\hat{\cartanh}_i^*)^{\pi_1({\mc{L}_i})}$, we see $\lambda$ lies in a root hyperplane of $W_i$. 
		
	\end{proof}
	
	\section{Namikawa-Weyl group via codimension 2 roots}{\label{Application to quiver varieties}}
	In this section, we continue to make the assumptions of \Cref{best convention on tilde v}, unless otherwise stated. 
	\subsection{Linear relations between codimension 2 roots}
	We combine the results of \Cref{Codimension 2 leaves in deformed quiver varieties} and \Cref{Codimension 2 leaves in deformed conical symplectic singularity} to describe the Namikawa-Weyl group of the affinization of a smooth quiver variety. 
	Recall the map $\kappa: \mathfrak{p} \to H^2(\M^\theta_0(v,w),\CC ) $ defined in \Cref{subsection taut line bundles}.
	
	Write $X = \M^0_0(v,w)$. For any positive root $v^1$ of $Q$, let $H_{v^1}\subset  \mathfrak{p}$ be the hyperplane $\{\lambda| \lambda\cdot v^1 = 0\}$.
	\begin{proposition}{\label{kappa induce bijection between hyperplanes}}
		The map $\kappa$ gives a bijection between the set \[\{H_{v^1}|v^1 \text{ is a codimension 2 root of Q}\}\] and the set of root hyperplanes of $\cartanh_X$. 
	\end{proposition}
	\begin{proof}
		Since a generic element in $H_{v^1}$ produces a deformation of $\M_0(v,w)$ which contains a codimension 2 leaf, $\kappa(H_{v^1})$ must be a proper subspace of $\cartanh_X$. Since $\kappa$ is surjective by \Cref{k is surjective}, $\kappa(H_{v^1})$ is in fact a hyperplane. 
		By \Cref{codimension 2 leaf deformation has root hyperplane parameters}, $\kappa(H_{v^1})$ is a root hyperplane of $\cartanh_X$. 
		
		Conversely, for any root hyperplane $H$ of $\cartanh_X$, the preimage $\kappa\inverse(H)$ is a hyperplane of $\mathfrak{p}$ since $\kappa$ is surjective. The fiber over any $h\in H$ has a codimension 2 symplectic leaf, so by \Cref{theorem cod 2 leaves in deformed quiver varietyes}, $\kappa\inverse(H)$ must be of the form $H_{v^1}$. It is clear that $\kappa$ and $\kappa\inverse$ give mutually inverse bijections on the set of hyperplanes as desired. 
	\end{proof}
	
	Recall the simply laced Cartan space $\hat{\cartanh}_i^* \cong \CC^{\underline{Q}_0}$ where $\underline{Q}$ is the slice quiver associated to the leaf $\L_i$; it is a finite type quiver by \Cref{BS isotropic decomposition}.
	Note there is a unique Dynkin diagram automorphism $\sigma$ such that $\cartanh_i = (\hat{\cartanh}_i^*)^\sigma$. 
	If $\alpha$ is a root of $\cartanh_i$, then there exists a (unique up to $\sigma$ translation) positive root $\underline{\alpha}$ in $\hat{\cartanh}_i^*$, such that $\alpha = \sum_{j=1}^{m(\alpha)} \sigma^j \underline{\alpha}$, where $m(\alpha)$ is the size of the $\sigma$-orbit of $\underline{\alpha}$. 
	View $\underline{\alpha}$ as an element of $\ZZ_{\ge 0}^{\underline{Q}_0}$. 
	\begin{lemma}\label{lemma identify degree of line bundle}
		Suppose the hyperplane of $\mathfrak{p}$ cut out by $v^1$ maps to the root hyperplane of $\alpha$ in $\cartanh_i$ under the map $\kappa$, and let $\underline{\alpha}$ be defined as above. Then for any $\chi \in \ZZ^{Q_0}$, $\chi\cdot v^1 = \kappa_i(\chi) \cdot \underline{\alpha}$. 
	\end{lemma}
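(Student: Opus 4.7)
The plan is to compute the integer $\deg\mathcal{O}(\chi,\M^\theta_\lambda)|_{\rho\inverse(x)}$ in two different ways. On the one hand, \Cref{lemma degree of line bundle on slice P1} already identifies this degree with $\chi\cdot v^1$. On the other hand, I would show, by passing to the formal slice at $x$, that it also equals $\kappa_i(\chi)\cdot\underline{\alpha}$; the desired equality then follows immediately.

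First I would push the whole setup to the formal neighborhood of $x\in\mc{L}_i$, using \Cref{structure of nbhd theorem} together with the family version of the slice isomorphism employed in the proof of \Cref{codimension 2 leaf deformation has root hyperplane parameters}. This yields, relative to the base $\CC\lambda$, an isomorphism of formal schemes
\[
(\M^\theta_{\CC\lambda}(v,w))^{\wedge_x}\cong (V\times\widetilde{S}_{\CC\lambda_i})^{\wedge_0},
\]
where $V = T_x\mc{L}_i$ and $\widetilde{S}_{\CC\lambda_i}\to S_{\CC\lambda_i}$ is obtained by pulling back the minimal resolution of the universal Kleinian family over $\hat{\cartanh}_i/\widehat{W}_i$ along $\CC\lambda_i\to \hat{\cartanh}_i/\widehat{W}_i$. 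Under this identification, the exceptional curve $\rho\inverse(x)\cong\PP^1$ is carried to an exceptional $\PP^1$ in the minimal resolution of the Kleinian singularity $S_{\kappa_i(\lambda)}$.

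Next I would invoke the classical identification $H^2(\widetilde{S},\CC)\cong\hat{\cartanh}_i^*$ for the minimal resolution of an ADE Kleinian singularity, under which the classes of exceptional $\PP^1$'s correspond to simple roots and the degree of any line bundle restricted to a given exceptional $\PP^1$ is the pairing of its $H^2$-class with the corresponding simple root. Since $\kappa_i(\lambda)$ is $\sigma$-invariant and lies in the root hyperplane associated to $\alpha$, it annihilates every root in the $\sigma$-orbit $\{\sigma^j\underline{\alpha}\}$, so the singular locus of $S_{\kappa_i(\lambda)}$ consists of $m(\alpha)$ points, whose exceptional curves are indexed by that orbit and permuted by the monodromy of $\pi_1(\mc{L}_i)$. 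At the single point $x$, the visible $\rho\inverse(x)$ corresponds to one representative of this orbit, which we may take to be $\underline{\alpha}$. Finally, the slice isomorphism intertwines $\mathcal{O}(\chi,\M^\theta_{\CC\lambda})$ with the line bundle on $\widetilde{S}_{\CC\lambda_i}$ whose class in $H^2$ equals $\kappa_i(\chi)$; this is the content of the direct sum decomposition recalled in \Cref{Ivan's decomposition of cartan space}. Pairing therefore yields $\kappa_i(\chi)\cdot\underline{\alpha}$, as required.

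The main obstacle lies in the second step: verifying carefully that the exceptional curve $\rho\inverse(x)$ is paired with the single simple root $\underline{\alpha}$ rather than the full orbit sum $\alpha$, and that no spurious scalar appears when transporting $\mathcal{O}(\chi,\M^\theta_{\CC\lambda})$ through the slice isomorphism to a line bundle on $\widetilde{S}_{\CC\lambda_i}$. Both are structural rather than computational issues, and should reduce to a careful application of the slice-compatibility in \cite[Section 2.3]{losev2016deformations} together with the naturality of the decomposition of \Cref{Ivan's decomposition of cartan space} with respect to tautological line bundles.
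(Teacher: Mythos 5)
There is a genuine gap, and it sits exactly where your proposal needs to do real work. You take $x\in\mc{L}_i$, a codimension $2$ leaf of the central fiber $\M_0(v,w)$, and assert that $\rho\inverse(x)\cong\PP^1$ is carried to one exceptional $\PP^1$ of the deformed Kleinian $S_{\kappa_i(\lambda)}$. But for $x$ in the central fiber, $\rho\inverse(x)$ is the whole exceptional configuration of the minimal resolution $\wt{S}\to S$ (a union of $\PP^1$'s dual to the full ADE diagram $\hat\cartanh_i$), not a single curve, unless the slice type happens to be $A_1$. Conversely, \Cref{lemma degree of line bundle on slice P1}, which you invoke for the computation $\deg=\chi\cdot v^1$, applies to a point of the codimension $2$ leaf of the \emph{subgenerically deformed} fiber $\M_\lambda^0(v,w)$, which lies in a different fiber of the family and is not in the formal neighbourhood of $x$; the singular points of $S_{c\lambda_i}$ for $c\neq 0$ are likewise not visible in $(S_{\CC\lambda_i}\times V)^{\wedge_0}$. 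So your two degree computations are not, as written, computations on the same curve. The missing bridge is precisely the paper's mechanism: take a point $r$ in a codimension $2$ leaf of $S_{\lambda_i}$, form the $\CC^*$-closure $\overline{\CC^*\cdot r}$, pull it through the completed slice isomorphism to a formal curve $\Spec\CC[[\hbar]]\to\M_{\CC\lambda}(v,w)^{\wedge_x}$, and base change to the generic point to obtain a single $\PP^1_{\CC((\hbar))}$ on which both sides of the identity are evaluated.

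The second issue is the one you yourself flag as the ``main obstacle'': that transporting $\mathcal{O}(\chi,\M^\theta_{\CC\lambda})$ through the slice isomorphism yields a line bundle whose relevant invariant is $\kappa_i(\chi)$, paired against the single root $\underline{\alpha}$ (not the orbit sum) with no extra scalar. This is not ``the content of'' \Cref{Ivan's decomposition of cartan space}, which is only a vector space decomposition; pinning down that the $\cartanh_i^*$-component computes degrees on the exceptional curves is essentially the assertion of the lemma itself, so leaving it to ``a careful application'' of the references is circular as a proof. The paper avoids appealing to the classical period-map normalization altogether: it realizes the slice family as a quiver variety $S_{\CC\lambda_i}=\M_{\CC\lambda_i}(Q_S,\delta)$ with the extended vertex viewed as framing, checks that $\lambda_i$ is a subgeneric parameter there with $\lambda_i\cdot\underline{\alpha}=0$, and applies \Cref{lemma degree of line bundle on slice P1} a second time on the slice side, so that both degrees are computed by the same quiver-theoretic formula on the same $\PP^1_{\CC((\hbar))}$. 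If you want to keep your more classical route via $H^2(\wt S,\CC)\cong\hat\cartanh_i^*$ and Grothendieck--Brieskorn--Slodowy theory, you must both supply the degeneration argument above and verify the normalization of the identification of $\kappa_i$ with restriction of line-bundle classes; as it stands the proposal assumes what is to be proved at that step.
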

	\begin{proof}
		Let $\mc{L}_i$ be the codimension 2 leaf of $\M_0(v,w)$ corresponding to $\cartanh_i$, and $x\in\mc{L}_i$. 
		Let $S$ denote the slice Kleinian singularity. Let $\lambda$ be a subgeneric parameter annihilated by $v^1$. 
		Let $\lambda_i$ be the image of $\lambda$ under the natural restriction $\mathfrak{p}\to \underline{\mathfrak{p}}$. We know by \Cref{BS isotropic decomposition} that $\underline{Q}^\infty$  is an affine type quiver and the dimension vector is the minimal imaginary root $\delta$. 
			For each $c\in\CC^*$, $\underline{\M}^0_{c\lambda_i}(\delta)$ has $m({\alpha})$ codimension 2 leaves, corresponding to
			the representation types $(\delta-\sigma^j(\underline{\alpha}),1; \sigma^j(\underline{\alpha}),1)$ 
			for $1\le j\le m(\alpha)$.
			
			Pick analytic open neighbourhoods $U$ of $x\in \M_{\CC\lambda}^0(v,w)$, $\underline{U}$ of $0 \in \underline{\M}_{\CC\lambda_i}^0(\delta) \times R_0$ and an isomorphism $\phi:U\to \underline{U}$ that satisfy \Cref{analytic local structure theorem}. Pick $y\in U$ that lies in the unique codimension 2 leaf of $\M_{\varepsilon \lambda}^0(v,w)$ for some small $\varepsilon\neq 0$. Then,  $y':= \phi(y)$ lies in one of the $m(\alpha)$ codimension 2 leaves of $\underline{\M}_{\varepsilon \lambda_i}^0(\delta) \times R_0$. 
			By \Cref{lemma degree of line bundle on slice P1}, for $\theta>0$, the restriction of the line bundle $\sheafO(\chi,\M_\mathfrak{p}^\theta(v,w))$ to $\rho\inverse(y)$ has degree $\chi\cdot v^1$. 
			Thanks to \Cref{remark how to project to cartanh i} and \Cref{kirwan map coincides with chern class}, $\phi$ identifies the vector bundles
			\[\sheafO(\chi,\M_\mathfrak{p}^\theta(v,w))|_U \cong \sheafO(\kappa_i(\chi),\underline{\M}_{\underline{\mathfrak{p}}}^\theta(\delta)) \boxtimes \CC[R_0]|_{\underline{U}}.\] 
			
			\Cref{lemma degree of line bundle on slice P1} then shows the restriction to $(\underline{\rho}\times\id) \inverse(y') \cong \PP^1 \times R_0$ has degree $\underline{\alpha} \cdot \kappa_i(\chi)$. Therefore, $\chi\cdot v^1 = \kappa_i(\chi) \cdot \underline{\alpha}$.
		\end{proof}
		The following corollary enables us to recover the linear relations of roots of $\cartanh_i$ from linear relations of codimension 2 roots of $Q$. 
		\begin{corollary}	\label{corollary relation of cod 2 roots and relation of Weyl group roots}
			Suppose $v^j$ are codimension 2 roots of $Q$ corresponding to the root $\alpha_j$ of $\cartanh_i$, for $j = 1,2,3$. Then the following conditions are equivalent:
			\begin{enumerate}[(i)]
				\item $\alpha_1=\alpha_2+\alpha_3$ in $\cartanh_i$;
				\item  $m(\alpha_1)v^1 = m(\alpha_2)v^2 +m(\alpha_3)v^3 $ in $\ZZ^{Q_0}_{\ge 0}$.
			\end{enumerate}
		\end{corollary}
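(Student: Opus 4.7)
The plan is to establish the chain (iii) $\iff$ (ii) $\iff$ (i) by pairing each statement with appropriate test elements and invoking the surjectivity of $\kappa$ (\Cref{k is surjective}) together with \Cref{lemma identify degree of line bundle}, treating the multiplicities $m(\alpha_j)$ via the identification $\cartanh_i^* = (\hat{\cartanh}_i^*)^{\sigma}$.

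For (iii) $\iff$ (ii), I would pair both sides of (iii) with an arbitrary $\chi \in \CC^{Q_0}$ and apply \Cref{lemma identify degree of line bundle} to rewrite $\chi \cdot v^j$ as $\kappa_i(\chi)\cdot \underline{\alpha}_j$. This translates (iii) into the statement that, for every $\chi$, the element $\kappa_i(\chi)\in\cartanh_i^*$ pairs to zero with the combination $m(\alpha_1)\underline{\alpha}_1 - m(\alpha_2)\underline{\alpha}_2 - m(\alpha_3)\underline{\alpha}_3$ of $\hat{\cartanh}_i^*$. By \Cref{k is surjective} the projection $\kappa_i:\CC^{Q_0}\to \cartanh_i^*$ is surjective, so the pairing condition across all $\chi$ is equivalent to that combination lying in the kernel of the projection $\hat{\cartanh}_i^* \twoheadrightarrow \cartanh_i^*$. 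This is precisely (ii) interpreted in $\cartanh_i^*$.

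For (ii) $\iff$ (i), I would exploit the defining identity $\alpha_j = \sum_{k=1}^{m(\alpha_j)}\sigma^k \underline{\alpha}_j$. A short computation with the averaging projection $\pi = \tfrac{1}{|\sigma|}\sum_k \sigma^k:\hat{\cartanh}_i^*\to (\hat{\cartanh}_i^*)^{\sigma}$ shows that $\pi(\underline{\alpha}_j) = \tfrac{1}{m(\alpha_j)}\alpha_j$, since each element of the $\sigma$-orbit of $\underline{\alpha}_j$ appears $|\sigma|/m(\alpha_j)$ times in the full group sum. Hence $m(\alpha_j)\underline{\alpha}_j$ has image exactly $\alpha_j$ in $\cartanh_i^*$, and projecting (ii) to $\cartanh_i^*$ delivers (i); conversely, any equation among the $\alpha_j$ in $\cartanh_i^*$ lifts to the corresponding equation among the $m(\alpha_j)\underline{\alpha}_j$'s up to the kernel of $\pi$, which is exactly what (ii) encodes.

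The main subtlety to watch is the bookkeeping between the simply-laced space $\hat{\cartanh}_i^*$ (in which $\underline{\alpha}_j$ lives and $\sigma$ acts nontrivially) and the folded space $\cartanh_i^*$ (where $\alpha_j$ and the image of $\kappa_i$ live). The key bridging identity $\kappa_i(\chi)\cdot \alpha_j = m(\alpha_j)\kappa_i(\chi)\cdot \underline{\alpha}_j$, valid because $\kappa_i(\chi)$ is $\sigma$-invariant, is what forces the multiplicities $m(\alpha_j)$ to appear in (ii) and (iii) but not in (i); once this identity is in hand the three equivalences are linear algebra.
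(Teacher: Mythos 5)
Your proof is correct and follows essentially the same route as the paper: (i)$\Leftrightarrow$(ii) from the definition of $m(\alpha_j)$ via the averaging/orbit-sum projection, and (ii)$\Leftrightarrow$(iii) from \Cref{lemma identify degree of line bundle} together with the surjectivity of $\kappa$ (\Cref{k is surjective}), with the $\sigma$-invariance of $\kappa_i(\chi)$ supplying the factors $m(\alpha_j)$ exactly as you note. The only remark is that you (sensibly) treat (ii) as an identity detected by $\sigma$-invariant functionals, i.e.\ after projection to the fixed subspace --- which is the reading the paper's terse proof also implicitly uses, since the representatives $\underline{\alpha}_j$ are only defined up to $\sigma$-translation.
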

		\begin{proof}
			We have
			\begin{align*}
				&\alpha_1 = \alpha_2 + \alpha_3\\
				 &\iff \sum_{j=1}^{m(\alpha_1)} \sigma^j \underline{\alpha_1} = \sum_{j=1}^{m(\alpha_2)} \sigma^j \underline{\alpha_2}+ \sum_{j=1}^{m(\alpha_3)} \sigma^j \underline{\alpha_3} \\
				&\iff \sum_{j=1}^{m(\alpha_1)} \sigma^j \underline{\alpha_1} \cdot \kappa_i(\chi) =
				\left( \sum_{j=1}^{m(\alpha_2)} \sigma^j \underline{\alpha_2} + \sum_{j=1}^{m(\alpha_3)} \sigma^j \underline{\alpha_3}\right) \cdot \kappa_i(\chi), \ \forall \chi \in \ZZ^{Q_0} \\
				&\iff m(\alpha_1)\underline{\alpha_1}\cdot \kappa_i(\chi) =( m(\alpha_2)\underline{\alpha_2}  + m(\alpha_3)\underline{\alpha_3})\cdot \kappa_i(\chi),\ \forall\chi \in \ZZ^{Q_0} \\
				&\iff m(\alpha_1)v^1 = m(\alpha_2)v^2 + m(\alpha_3)v^3.
			\end{align*}
			The first equivalence above holds by the definition of $\underline{\alpha}$; the second equivalence holds since $\kappa$ is surjective; the third equivalence holds since $\kappa_i(\chi)$ is invariant under $\sigma$-action; the last equivalence holds by \Cref{lemma identify degree of line bundle}. 
		\end{proof}
		
		\begin{remarklabeled}{\label{Main result}}
			It is classical that if $\cartanh_i$ is type $B,C$ or $F_4$ (we have seen the latter two cases are impossible) then $m(\alpha)=2$ when $\alpha$ is a long root, and $m(\alpha)=1$ when $\alpha$ is a short root. If $\cartanh_i$ is type $G_2$ then $m(\alpha)=3$ when $\alpha$ is a long root, and $m(\alpha)=1$ when $\alpha$ is a short root. If $\cartanh_i$ is simply-laced then all $m(\alpha) = 1$. 
			
			Therefore, thanks to \Cref{corollary relation of cod 2 roots and relation of Weyl group roots}, by examining the root strings of multiply-laced Dynkin diagrams, we see that only the following relations for codimension 2 roots $v^j$, $j\in\{1,2,3\}$, are possible (up to reordering):
			\begin{enumerate}[(a)]
				\item $v^1+v^2=v^3$ 
				\item $v^1+2v^2=v^3$ 
				\item $v^1+v^2=2v^3$ 
				\item $v^1+3v^2=v^3$ 
				\item $v^1+v^2=3v^3$ 
			\end{enumerate}
			In the above list, if the coefficient before $v^j$ is greater than 1, then $v^j$ corresponds to a long root, and the other two roots in the relation are short roots.  
			
			Therefore, we can recover the Namikawa-Weyl group of $\M_0(v,w)$ by finding all linear relations of codimension 2 roots of $Q$ of the form (a)-(e). 
		\end{remarklabeled}
		We say a codimension 2 root $v^1$ is \textit{simple} if it does not satisfy any relation $m_1 v^1 = m_2 v^2 +m_3 v^3 $ for codimension 2 roots $v^2,v^3$ and $m_i\in\{1,2,3\}$. The following corollary, 
		which follows directly from \Cref{corollary relation of cod 2 roots and relation of Weyl group roots}, relates the isotropic decomposition \Cref{BS isotropic decomposition} and codimension 2 roots. It enables us to find all the codimension 2 leaves of $\M_0(v,w)$. 
		\begin{corollary}\label{indecomposable cod 2 roots give iso decomposition}
			If a root appears in an isotropic decomposition, then it is a multiple of an simple codimension 2 root. 
			Conversely, if $v^1$ is an simple codimension 2 root, then a multiple of $v^1$ appears in an isotropic decomposition. 
		\end{corollary}
		
		The following is a consequence of \Cref{kappa induce bijection between hyperplanes} and \Cref{indecomposable cod 2 roots give iso decomposition}. 
		\begin{corollary}\label{different isotropic decomp has different roots}
			If $v^1$ is an simple codimension 2 root, then there is a \textit{unique} isotropic decomposition in which a multiple of $v^1$ appears. In particular, all the roots appearing in the representation types of different codimension 2 leaves are distinct. 
		\end{corollary}

		Let us recall the four types of codimension 2 roots of $Q$, given by \Cref{theorem cod 2 leaves in deformed quiver varietyes}:
		\begin{enumerate}
			\item $v^1$ is a real root,  $\<\nu, v^1\> = 0$, and $\tilde{v} - v^1$ is a root of ${Q^\infty}$.
			\item $v^1$ is an isotropic imaginary root, $\<\nu ,v^1\>=2$, and $ \tilde{v} - v^1$ is a root of ${Q^\infty}$.  
			\item $v^1$ is a non-isotropic imaginary root, $\tilde{v}-n v^1$ is a root for $n=1$ or $2$,  $(\tilde{v} - nv^1, nv^1) = -2$, and $\tilde{v}-mv^1$ is not a root for any $m> n$.
		\end{enumerate}
		
		The relations in \Cref{Main result} are not arbitrary. For $i,j,k\in\{1,2,3\}$, if $\alpha,\beta,\gamma $ are type $(i),(j),(k)$ codimension 2 roots respectively, and there exists $m,n,p\in\{1,2,3\}$ such that $m\alpha + n \beta = p\gamma$, then we write $(i)+(j) = (k)$; otherwise we write $(i)+(j) \neq (k)$.
		
		\begin{theorem}{\label{all possible lin comb}}
			The following (symmetric) table lists all possibilities for $(i)+(j) = (k)$. 
			
			\begin{center}
				\begin{tabular}{ |c|c|c|c| }
					\hline
					+   &   (1)   & (2) & (3)  \\ \hline
					(1) &   (1)   &     &      \\ \hline
					(2) &  (2),(3)    &  (3)  &      \\ \hline
					(3) & (2),(3) &  (3)  & (3)      \\ \hline
				\end{tabular}
			\end{center}
		\end{theorem}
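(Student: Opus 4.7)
The plan is to treat the theorem as a finite case analysis of the five relations (a)--(e) of \Cref{Main result} against all triples of types $(i),(j),(k)\in\{1,2,3,4\}$. The three tools I would use are the values of $\<\nu,v\>=-(\tilde v,v)$ for the codimension 2 roots, the structural description of type (2) leaves via \Cref{remark that theorem cod 2 leaves in deformed quiver varietyes is bs cor}, and a classification of isotropic imaginary roots of a simply-laced quiver.

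First, \Cref{theorem cod 2 leaves in deformed quiver varietyes} gives $\<\nu,v^j\>=0$, $1$, $2$, or $2-(v^j,v^j)\ge 4$ according as $v^j$ is of type (1), (2), (3), or (4), and the value is even for type (4) since the Tits form is even on the root lattice of a simply-laced quiver. Applying $\<\nu,\cdot\>$ to $mv^1+nv^2=pv^3$ yields $m\<\nu,v^1\>+n\<\nu,v^2\>=p\<\nu,v^3\>$ with $(m,n,p)$ drawn from the five coefficient patterns of (a)--(e) and the three values constrained to $\{0,1,2\}\cup 2\ZZ_{\ge 2}$. A finite check immediately eliminates most ``/'' entries of the table: $(1)+(1)=(k)$ forces $k=(1)$; the assignments $(2)+(3)=(k)$ and $(2)+(4)=(k)$ with $k\in\{1,3,4\}$ are killed by parity; and among all entries marked ``/'' only $(3)+(3)=(3)$ via (c) survives this numerical test. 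The rest of the second row and column --- which all involve type (2) --- is handled structurally: by \Cref{remark that theorem cod 2 leaves in deformed quiver varietyes is bs cor} and Crawley--Boevey's canonical decomposition, a type (2) root $v^1=\delta$ corresponds to a codimension 2 leaf of $\M_0(v,w)$ lying in the symmetric product factor $S^N\M_0(\delta)$ whose slice Kleinian singularity is $A_1$; hence the corresponding $\cartanh_i$ has rank one with a single positive root, namely the one given by $v^1$, and \Cref{kappa induce bijection between hyperplanes} then implies that no other codimension 2 root of $Q$ maps to a root of the same $\cartanh_i$, so $v^1$ admits no non-trivial relation of the form (a)--(e) with other codimension 2 roots.

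For the surviving case $(3)+(3)=(3)$ via $v^1+v^2=2v^3$, isotropy of all three $v^j$ together with the Tits identity $(v^1+v^2,v^1+v^2)=4(v^3,v^3)=0$ forces $(v^1,v^2)=0$. Every isotropic imaginary root of a simply-laced quiver is a positive multiple of either the minimal imaginary root of an affine Dynkin subquiver or of a one-loop vertex, and any two non-proportional such roots have disjoint supports. If $\Supp v^1\cap\Supp v^2=\emptyset$ then $v^3=(v^1+v^2)/2$ has disconnected support and is not a root, contradicting \Cref{lemma imaginary root}; if $v^1$ and $v^2$ are proportional, then $(\tilde v,v^j)=-2$ for $j=1,2$ forces $v^1=v^2$ and the relation is trivial. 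Finally, for each cell of the table declared possible an explicit realisation is constructed by adapting the families of quiver data in \Cref{examples of theorem 3.1}. I expect the main obstacle to be the classification of isotropic imaginary roots entering the last exclusion, which needs to be extracted carefully from \Cref{lemma imaginary root} and Kac's general structure theorem; the rest of the argument is a mechanical case check.
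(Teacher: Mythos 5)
Your numerical sieve is much weaker than you claim, and this is where the real content of the theorem lies. Pairing with $\nu$ (values $0,1,2,2-(v^1,v^1)\ge 4$) against the coefficient patterns (a)--(e) of \Cref{Main result} does not leave only $(3)+(3)=(3)$ alive among the excluded cases: for instance $2v^{(2)}+v^{(3)}=v^{(4)}$ with $(v^{(4)},v^{(4)})=-2$ passes the test (values $2\cdot 1+2=4$), as do $v^{(1)}+v^{(2)}=v^{(2)}$ and $2v^{(1)}+v^{(2)}=v^{(2)}$ (values $0+1=1$), $v^{(1)}+2v^{(2)}=v^{(3)}$, $v^{(1)}+v^{(3)}=2v^{(2)}$, $3v^{(2)}+v^{(2)}=v^{(4)}$, $2v^{(2)}+v^{(4)}=v^{(4)}$, $v^{(2)}+v^{(2)}=2v^{(2)}$, and $v^{(3)}+v^{(4)}=3v^{(3)}$. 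In particular your parity argument for $(2)+(3)$ and $(2)+(4)$ fails because the coefficient $2$ or $3$ can sit on the type (2) root. The paper eliminates these surviving cases by separate arguments you do not supply: Tits-form computations forcing non-integral $(\alpha,\beta)\in\{-\tfrac12,-\tfrac13\}$; an integrality argument with the reflection $s_\alpha$ for $(1)+(3)\neq(2)$; a genuine support/reflection argument (via \Cref{lemma imaginary root} and the multiplicity of $\alpha_\infty$) for the hardest case $(1)+(2)\neq(2)$; and the observation that a positive combination involving a non-isotropic imaginary root is never isotropic, which is what kills $(3)+(4)=(3)$ and $(4)+(j)=(2),(3)$.

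The structural shortcut for the type (2) row also does not work. The statement is purely combinatorial, about linear relations of dimension vectors; the assertion that the $\cartanh_i$ attached to a type (2) root has rank one is exactly the corollary the paper deduces \emph{from} \Cref{all possible lin comb}, so taking it as input is circular. The $A_1$ slice you see in the subgeneric deformation occurs equally for type (1) roots (which do combine into large root systems), so it says nothing about the rank of $\cartanh_i$ for the central fiber; moreover \Cref{kappa induce bijection between hyperplanes} and \Cref{corollary relation of cod 2 roots and relation of Weyl group roots} translate relations of dimension vectors into root relations only when the three roots are already known to lie in the same $\cartanh_i$, so they cannot be used to rule out a numerical relation. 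Your classification of isotropic imaginary roots is also false as stated: such roots are only $W_Q$-conjugate to multiples of a $\delta$ of an affine subquiver (e.g.\ $s_i\delta$ for $i$ adjacent to the subquiver is isotropic with strictly larger support), and two orthogonal isotropic roots need not have disjoint supports; the paper instead applies $\sigma\in W_Q$ carrying one root to $\delta$ and argues with connectedness of supports. Finally, the positive half of the theorem requires explicit realizations of $(1)+(3)=(4)$, $(1)+(4)=(3)$, $(1)+(4)=(4)$, $(3)+(3)=(4)$, $(3)+(4)=(4)$ and $(4)+(4)=(4)$, which need new quivers beyond \Cref{examples of theorem 3.1} and are only gestured at in your proposal.
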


		\begin{proof}[Proof of \Cref{all possible lin comb}]
			We first prove all statements $(i) + (j) \neq (k)$, then give an example for each valid $(i)+(j) = (k)$. 
			
			\begin{enumerate}[a)]
				\item $(1)+(1)\neq (i)$, $(i)+(j)\neq (1)$, $(1)+(i)\neq (1)$ for $i,j\in \{2,3\}$. 
				
				If $\alpha$ is a type (2) or (3) codimension 2 root then $(\tilde{v},\alpha) <0$, but if $\alpha$ is type (1)  then $(\tilde{v},\alpha) =0$.

				\item $(2)+(2)\neq (2)$. 
				
				Suppose $\alpha,\beta,\gamma$ are linearly independent imaginary roots and $m,n,p\in\{1,2,3\}$ such that $m\alpha+n\beta = p\gamma$ as in \Cref{Main result}. Take the Tit's form of both sides, we see $(\alpha,\beta) = 0$. 
				Let $\sigma \in W_Q$ be such that $\sigma \alpha = \delta$ for some affine Dynkin diagram, then $(\sigma \alpha,\sigma\beta) = (\delta,\sigma\beta) = 0$.
				Since $\Supp(m\sigma\alpha+n\sigma\beta) = \Supp(p\sigma\gamma)$ is connected, we must have  $\Supp(\sigma\beta )\subset \Supp(\delta)$.
				By \Cref{lemma imaginary root}, $\sigma \beta$ is a positive isotropic imaginary root, so we must have $\sigma\beta = \delta$, which implies $\alpha=\beta$, contradiction. 
				
				%
				%
				
				%
				
				\item $(3)+(2)\neq (2)$ and $(3)+(3) \neq 2$.  
				
				In fact, if $\alpha$ is a positive nonisotropic imaginary root, $\beta$ is a positive imaginary root, and $m,n >0$ such that $m\alpha + n\beta$ is an isotropic imaginary root, then $(m\alpha + n\beta, m\alpha + n\beta) = 0$, so $(\alpha,\beta)>0$. But if $\sigma\in W_Q$ is such that $(\sigma\alpha,\alpha_i)\le 0$ for all simple roots $\alpha_i$, we would have $(\sigma\alpha,\sigma\beta) > 0$, implying $\sigma\beta <0$. This contradicts that $\beta$ is an imaginary root. 
			\end{enumerate}
			We now provide examples for all $(i)+(j) = (k)$ in the table. 
			\begin{enumerate}[a)]
				\setcounter{enumi}{4}
				\item $(1)+(1) = (1)$, see \Cref{MN weyl group dynkin quiver}. 
				\item $(1) + (2) = (2)$. 
				Consider the following quiver 
				\begin{center}
					\begin{tikzpicture}[scale=0.9]
						\filldraw[black] (0,0) circle (1.5pt);
						\foreach \t in {1,-1}{\filldraw[black] (\t,\t) circle (1.5pt);
							\filldraw[black] (-\t,\t) circle (1.5pt);
							\node at (\t,2.2) [rectangle,draw]  {$1$};
							\draw[-,thick] (0.1*\t,0.1*\t)--(0.9*\t,0.9*\t);
							\draw[-,thick] (0.1*\t,-0.1*\t)--(0.9*\t,-0.9*\t);
							\draw[-,thick] (\t,1.1)--(\t,1.9);}	
						\draw[-,thick] (-.9,-1)--(0.9,-1);
						\node at (-0.4,0) {$\alpha_0$};	
						\node at (-1.4,-1) {$\alpha_1$};
						\node at (1.4,-1) {$\alpha_2$};
						\node at (-1.4,1) {$\beta_1$};
						\node at (1.4,1) {$\beta_2$};
					\end{tikzpicture}
				\end{center}
				We take $v = \alpha_0+\alpha_1+\alpha_2+\beta_1+\beta_2$, and $w$ be $1$ over the vertices (corresponding to the simple roots) $\beta_1,\beta_2$, and 0 elsewhere, as is indicated by the squares. Let $\delta = \alpha_0+\alpha_1+\alpha_2$ which is easily checked to be a root of type (2). The simple real roots $\beta_1$ and $\beta_2$ are both of type (1). The root $\delta+ \beta_1$ is of type (2). 
				This gives examples of $(1)+(2) = (2)$. 
				
				\item $(1)+(2) = (3), (1)+(3) = (2)$. \label{B2 example}
				
				Consider the quiver in part (4) of \Cref{examples of theorem 3.1}.
				In the notations there, $\alpha$ is a type (1) codimension 2 root, $\beta$ and $\alpha+\beta$ are type (2), while $\alpha+2\beta$ is type (3). 
				The relations
				\[\alpha + 2\beta = \alpha+2\beta, \alpha + (\alpha+ 2\beta) =2(\alpha+\beta) \]
				giving examples of   $(1)+(2) = (3)$ and $(1)+(3) = (2)$ respectively. 
				
				\item  $(1)+(3) = (3),(2)+(2) = (3)$ and $(3)+(3) = (3)$. \label{G2 example in proof} 
				
				Consider the quiver and the dimension vector in \Cref{Nakajima G2 quiver via isotropic decomposition}, i.e. 
				$v = 2\alpha_1 +3\beta$, $w$ is 1 over $\alpha_1$ and 0 elsewhere. We see $\alpha_1$ is a type (1) codimension 2 root, $\alpha_1 + 2\beta$, $\alpha_1 + 3\beta$ and $2\alpha_1 + 3\beta$ are type (3) roots, $\beta$ and $\alpha_1 + \beta$ are type (2). 
				We have relations
				\begin{align*}
					& (\alpha_1) + (\alpha_1+ 3\beta) = 2\alpha_1 + 3 \beta;\\
					&(\beta) + (\alpha_1 + \beta) = \alpha_1 + 2\beta;\\
					&(\alpha_1+3\beta) + (2\alpha_1 + 3\beta) = 3 (\alpha_1 + 2\beta)
				\end{align*}
				which are examples of $(1)+(3)=(3), (2)+(2) = (3)$ and $(3)+(3) = (3)$. 
				
				\item  $(2)+(3) = (3)$.  
				Consider the following quiver. 
				\begin{center}
					
					\begin{tikzpicture}[scale=1]
						\foreach \t in {1,-1}
						{
							\filldraw[black] (\t,0) circle (1.5pt);
							\filldraw[black] (2*\t,0) circle (1.5pt);
							\filldraw[black] (3*\t,-1) circle (1.5pt);
							\filldraw[black] (3*\t,1) circle (1.5pt);
							\node at (\t,1.3) [rectangle,draw]  {1};
							\draw[-,thick] (\t*1.1,0)--(\t*1.9,0);
							\draw[-,thick] (\t*3,-0.9)--(\t*3,0.9);
							\draw[-,thick] (\t*2.9,-1)--(\t*2.1,-0.1);
							\draw[-,thick] (\t*2.9,1)--(\t*2.1,0.1);
							\draw[-,thick] (\t,.1)--(\t,1);
						}
						\draw[-,thick] (0.9,0)--(-0.9,0);
						\node at (-1,-0.3) {$\alpha_3$};
						\node at (-2,-0.3) {$\alpha_0$};
						\node at (-3.3,-1) {$\alpha_1$};
						\node at (-3.3,1) {$\alpha_2$};	
						\node at (1,-0.3) {$\beta_3$};
						\node at (2,-0.3) {$\beta_0$};
						\node at (3.3,-1) {$\beta_1$};
						\node at (3.3,1) {$\beta_2$};		
					\end{tikzpicture}
					
				\end{center}
				Write $\delta_1 = \alpha_0+\alpha_1+\alpha_2$ and $\delta_2 = \beta_0 + \beta_1+\beta_2$. Let $v = \delta_1  + \alpha_3 + \beta_3+2\delta_2$, and $w$ be 1 over $\alpha_3,\beta_3$, 0 elsewhere. Let $\alpha = \delta_1 + \alpha_3$ so that $\alpha$ is an isotropic imaginary root, and $(\tilde{v},\alpha) = -2$. Since $\tilde{v} - \alpha = \alpha_\infty + \beta_3 + 2\delta_2$ is an imaginary root, we see $\alpha$ is a type (2) root. 
				Let $\beta = 2\delta_2 + \beta_3$ so that $\beta$ is a non-isotropic imaginary root, and $(\tilde{v}-\beta,\beta) = (\alpha_\infty + \alpha_3 + \delta_1, 2\delta_2 + \beta_3) = -2$. Also, $\tilde{v}-\beta = \alpha_\infty + \delta_1+\alpha_3$ is an imaginary root, and $\tilde{v} \not\ge n\beta$ for $n\ge 2$. Therefore, $\beta$ is a type (3) root. 
				
				Note $v$ is a non-isotropic imaginary root. $(\tilde{v}-\gamma,\gamma) = (\alpha_\infty,\gamma) = -2$, $\tilde{v}-v = \alpha_\infty$ is a root, and clearly $\tilde{v}-nv$ is not a root for $n\ge 2$. Therefore, $v$ is a type (3) root and $v = \alpha + \beta$ gives an example of $(2)+(3)=(3)$. 
			\end{enumerate}
		\end{proof}
		
		\begin{remarklabeled}\label{ogrady example kappa not surjective}
			We finish this subsection with a remark on generalizing our method to quiver varieties $\M_0^0(v)$ without framing. 
			An important ingredient of our approaches is the surjectivity of $\kappa$, \Cref{k is surjective}. For non-framed quiver variety, the source of $\kappa$ is restricted to $\{\lambda\in\mathfrak{p}|\lambda\cdot v = 0 \}$. However, it is not known to our knowledge whether a similar results hold when all components of the dimension vector $v$ are larger than 1, even if it is indivisible, which means $v$ is not a nontrivial integral multiple of a smaller root. For more details, see  \cite[Lemma 4.11]{mcgerty2018kirwan}. 
			
			If $v$ is not indivisible, then $\kappa$ need not be surjective. In fact, let $Q$ be the quiver with a single vertex and 2 edge loops, and let $v=2$. Then the set $\{\lambda\in\mathfrak{p}|\lambda\cdot v = 0 \}$ is just $\{0\}$. 
			On the other hand, it is known that (e.g. \cite[Section 5]{bellamy2021symplectic}) $\M_0^0(v) \cong \mathcal{N} \times \CC^4$ where $\mathcal{N} \subset \sp(4)$ is the closure of the nilpotent orbit $\{B\in\sp(4)|B^2 = 0, \rank B = 2\}$. The variety $\mathcal{N}$ has a symplectic resolution given by $T^*(G/P)$ where $P \subset\Sp(4)$ is the stabilizer of a Lagrangian subspace. Therefore, $T^*(G/P)\times \CC^4 \to \M_0^0(v)$ is a symplectic resolution of singularities. The base of universal deformation is $H^2(T^*(G/P)\times \CC^4,\CC) \cong H^2(G/P,\CC) \neq 0$, e.g. by the Bruhat decomposition. Therefore, $\kappa$ is not surjective. 
		\end{remarklabeled}

		\subsection{Examples}
		We now apply the idea of subgeneric deformation to compute some Namikawa-Weyl groups.
		Let us first consider \Cref{Nakajima G2 quiver via isotropic decomposition} again. 
		
		\begin{example}{\label{Nakajima's G2 example}}
			
			Let $Q,v,w$ be as in \Cref{Nakajima G2 quiver via isotropic decomposition}. Then the codimension 2 roots are listed below: $\alpha_1$, type (1); $\beta$ and $\alpha_1+\beta$, type (2); $\alpha_1+2\beta$, $\alpha_1+3\beta$ and $2\alpha_1+3\beta$, type (3).
			The linear relations between them satisfy the $G_2$ Dynkin diagram: 
			\begin{center}
				\begin{tikzpicture}
					\filldraw[black] (0,0) circle (1.5pt);
					\filldraw[black] (2,0) circle (1.5pt);
					\filldraw[black] (1,1.732) circle (1.5pt);
					\filldraw[black] (3,1.732) circle (1.5pt);
					\filldraw[black] (0,3.464) circle (1.5pt);
					\filldraw[black] (-1,1.732) circle (1.5pt);
					\filldraw[black] (-3,1.732) circle (1.5pt);
					\draw[-,thick] (0.1,0)--(1.9,0);
					\draw[-,thick] (0.1,0.1)--(2.9,1.65);
					\draw[-,thick] (0.05,0.1)--(0.95,1.66);
					\draw[-,thick] (0,0.1)--(0,3.35);
					\draw[-,thick] (-0.1,0.1)--(-2.9,1.65);
					\draw[-,thick] (-0.05,0.1)--(-0.95,1.66);
					\node at (0,-0.3) {0};
					\node at (2,-0.3) {$\alpha_1$};
					\node at (1,2) {$2\alpha_1+3\beta$};
					\node at (3,2) {$\alpha_1+\beta$};
					\node at (-3,2) {$\beta$};
					\node at (-1,2) {$\alpha_1+3\beta$};
					\node at (0,3.8) {$\alpha_1+2\beta$};
				\end{tikzpicture}
			\end{center}
			Here, linear relations are normalized according to short/long roots: if a long root $\gamma$ is involved, we replace $\gamma$ by $3\gamma$. 
			For example, $\alpha_1+(2\alpha_1+3\beta) = 3(\alpha_1+\beta)$; the left hand side is the sum of short roots, and the right hand side is a normalized long root. Therefore, $\M_0^0(v)$ has only one codimension 2 root, and its Namikawa-Weyl group is the Weyl group of type $G_2$.  
		\end{example}
		
		\begin{proposition}{\label{multiply laced only for wild quiver}}
			Suppose $Q$ is a Dynkin quiver or an affine quiver. Then the Namikawa-Weyl group of $\M_0(v,w) $ is a direct product of type $A,D$ or $E$ Weyl groups. 
		\end{proposition}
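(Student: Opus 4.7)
The plan is to combine the Main result (\Cref{Main result}) with the table of possible relations (\Cref{all possible lin comb}) and use the well-known structure of roots of Dynkin and affine quivers. By \Cref{Main result} the Namikawa-Weyl group is a product of Weyl groups, each of which is encoded by a connected family of codimension 2 roots together with their linear relations of types (a)--(e); a direct product factor is multiply-laced precisely when at least one of the relations (b)--(e) occurs in its family, since these are the relations that force some $v^j$ to be a long root (with coefficient greater than $1$). Thus it suffices to show that, when $Q$ is Dynkin or affine, only relations of type (a) can occur.

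Suppose $Q$ is of finite Dynkin type (necessarily $ADE$, since we assume $Q$ is simply-laced). Then $Q$ has no imaginary roots, so every codimension 2 root is of type (1) in the sense of \Cref{definition codimension 2 root}. Consulting the table in \Cref{all possible lin comb}, the only admissible combination among three type (1) roots is $(1)+(1)=(1)$, whose only possible form in \Cref{Main result} is (a). Hence every factor of the Namikawa-Weyl group is simply-laced, which settles the Dynkin case.

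Now suppose $Q$ is affine (of untwisted simply-laced type $\widehat{A},\widehat{D},\widehat{E}$). The positive imaginary roots of $Q$ are exactly $\{n\delta : n \ge 1\}$, and all are isotropic because $(\delta,\delta)=0$. Consequently $Q$ admits no non-isotropic imaginary roots, so there are no codimension 2 roots of type (4). Moreover, by \Cref{Lemma roots nv1 in Sigma for subgeneric lambda}(2) the only indecomposable positive imaginary root is $\delta$ itself, so any codimension 2 root of type (2) or (3) must equal $\delta$. In particular there is at most one such root for a given $(v,w)$, and types (2) and (3) cannot coexist since they require different values of $\langle\nu,\delta\rangle$. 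Inspecting the table, every entry other than $(1)+(1)=(1)$ that could contribute a new factor either involves a type (4) root (excluded by the absence of non-isotropic imaginary roots) or requires two distinct type (3) (or type (2)) roots (excluded by uniqueness of $\delta$). Relations of the form $(1)+(1)=(1)$ only give simply-laced factors, and an isolated $\delta$ of type (2) or (3) contributes an $A_1$ factor; thus the Namikawa-Weyl group is again a product of $ADE$ Weyl groups.

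The main subtlety is the affine case: one must rule out a relation of the shape $(1)+(3)=(3)$, which is listed as possible in general but fails here precisely because $\delta$ is the unique indecomposable imaginary root of an affine quiver. Everything else reduces to bookkeeping from \Cref{all possible lin comb} once this uniqueness is in hand.
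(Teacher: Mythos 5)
Your reduction (exclude type (4), then show only relations of pattern (a) can occur, using \Cref{all possible lin comb} and \Cref{Main result}) is sound in outline, but it has a genuine gap at the step ``$(1)+(1)=(1)$, whose only possible form in \Cref{Main result} is (a)'', which you also reuse in the affine case as ``relations of the form $(1)+(1)=(1)$ only give simply-laced factors''. Neither \Cref{all possible lin comb} nor \Cref{Main result} says this: the table only records which \emph{type} combinations $(i)+(j)=(k)$ admit some relation with coefficients $m,n,p\in\{1,2,3\}$, and it is precisely a relation of pattern (b)--(e) among three type (1) roots that would produce a multiply-laced factor. So this assertion is the heart of what the proposition claims (in the Dynkin case it is the entire proof) and cannot be cited away; it needs an argument. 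The argument is short: if $mv^1+nv^2=pv^3$ with all $v^j$ real and the coefficients as in (b)--(e), computing the Tits form of both sides forces $(v^1,v^2)\in\{-2,2,-3,7\}$ according to the pattern. For Dynkin $Q$ the form is positive definite, so $(v^1,v^2)=\pm 2$ forces $v^2=\pm v^1$ (impossible or degenerate for distinct positive roots) and $-3,7$ are impossible. For affine $Q$ the pairing of two real roots equals that of their finite parts, so $-3$ and $7$ are again impossible, while $(v^1,v^2)=\mp 2$ forces $v^1\pm v^2\in\ZZ\delta$; pairing with $\tilde v$, which kills $v^1,v^2$ (type (1)) but satisfies $(\tilde v,\delta)=-w\cdot\delta<0$ because $w\neq 0$ by \Cref{Nonempty smooth quiver variety implies root}, shows the multiple of $\delta$ is zero, contradicting positivity (pattern with $-2$) or making the relation degenerate (pattern with $+2$). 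With this inserted, your proof is complete.

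Apart from that, your route through the affine case is genuinely different from the paper's. The paper does not rule out $(1)+(3)=(3)$ relations (they do occur, but only for wild quivers); instead it shows a codimension 2 root corresponding to a short root cannot support a root string of length $\ge 3$, by chaining two relations and deriving $(\alpha,\beta)=-m$ and $(\alpha,\gamma)=-m'$ against $(\alpha,\gamma)=m>0$. Your observation that for affine $Q$ the unique indivisible isotropic imaginary root is $\delta$, so there is at most one codimension 2 root of type (2) or (3) and hence no $(1)+(3)=(3)$ relation at all, disposes of that case more directly and is a nice simplification; just note that this fact is standard affine root theory rather than a consequence of \Cref{Lemma roots nv1 in Sigma for subgeneric lambda}, which concerns membership of $nv^1$ in $\Sigma_{\tilde\lambda}$, not indecomposability of imaginary roots.
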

		\begin{proof}
			For Dynkin or affine $Q$, type (3) codimension roots cannot appear. Suppose a codimension 2 root $\alpha$ of type (1) or (2). We show all $\alpha$-root strings have length at most 2. This will imply all the component of the Namikawa-Weyl group are of simply-laced type. 
			
			If not, then there exists $m,n,p,m',n',p' \in \{1,2,3\}$ and roots $\beta,\gamma,\gamma'$ of type (1), (2) such that $m\alpha+n\beta = p\gamma,m'\alpha+n'\gamma = p'\gamma'$. 
			Since $Q$ is simply laced Dynkin or affine, it is impossible that all of $\alpha,\beta,\gamma,\gamma'$ are real. 
			By \Cref{all possible lin comb}, the only possibility is $\alpha$ is type (1) and $\beta,\gamma,\gamma'$ are type (2). 
			Pair both sides of $m\alpha+n\beta= p\gamma$ with $\tilde{v}$, we see $n=p=1$. Then $(m\alpha+\beta,m\alpha+\beta) = 0$ implies $(\alpha,\beta) = -m$. Similarly, $m'\alpha+n'\gamma = p'\gamma'$ implies $(\alpha,\gamma) = -m'$. But $(\alpha,\gamma) = (\alpha,\beta + m\alpha) = m>0$, contradiction. 
		\end{proof}
		
		We recover McGerty and Nevins' results, \Cref{MN weyl group dynkin quiver}, from \Cref{corollary relation of cod 2 roots and relation of Weyl group roots}. We need a combinatorial lemma, whose proof we postpone to the end of this section. 
		
		\begin{lemma}{\label{root combi lemma}}
			Suppose $Q$ is a type $A,D$ or $E$ Dynkin or affine type quiver, the dimension vector $v$ and framing $w$ are such that $\nu = \Lambda_{w} - \sum v_i\alpha_i$ is dominant, and $w\neq 0$. 
			Then for any positive real root $\alpha$ such that $\<\nu,\alpha\> =0$, $\tilde{v} - \alpha$ is a root of ${Q^\infty}$. 
		\end{lemma}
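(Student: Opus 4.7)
I would split the argument into two parts: first establish the orthogonality $\langle\nu,\alpha\rangle=0$, then prove that $\tilde v-\alpha$ is a root of $Q^\infty$.

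For the orthogonality, I would use the standard fact that between two positive real roots $\alpha\le\beta$ in a Kac–Moody root system (valid for $ADE$ Dynkin or affine), there is a chain $\alpha=\gamma_0<\gamma_1<\cdots<\gamma_k=\beta$ of positive roots with each $\gamma_j-\gamma_{j-1}$ a simple root. Since $\nu$ is dominant, $\langle\nu,\gamma_j\rangle-\langle\nu,\gamma_{j-1}\rangle=\langle\nu,\alpha_{i_j}\rangle\ge 0$, and $\langle\nu,\gamma_j\rangle\ge 0$ for every positive root. Combined with $\langle\nu,\gamma_k\rangle=\langle\nu,\beta\rangle=0$, this forces $\langle\nu,\gamma_j\rangle=0$ for all $j$; in particular $\langle\nu,\alpha\rangle=0$ (equivalently $(\tilde v,\alpha)=0$) and each simple root $\alpha_{i_j}=\gamma_j-\gamma_{j-1}$ lies in $J:=\{j\in Q_0:\langle\nu,\alpha_j\rangle=0\}$.

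For the root property, I would run a downward induction on the height of $\alpha$ along the chain, with base case $\alpha=\beta$. The inductive step is short: given $\alpha<\beta$, the chain supplies a simple $\alpha_i\in J$ such that $\alpha+\alpha_i$ is a positive root $\le\beta$. By induction, $\tilde v-(\alpha+\alpha_i)$ is a root. Since $Q$ is simply-laced and $\alpha+\alpha_i$ is a root we have $(\alpha,\alpha_i)=-1$, while step 1 gives $(\tilde v,\alpha_i)=0$. Therefore $(\tilde v-\alpha-\alpha_i,\alpha_i)=-(\alpha+\alpha_i,\alpha_i)=-1$, so the $\alpha_i$-string through $\tilde v-\alpha-\alpha_i$ satisfies $p-q=-1$, forcing $q\ge 1$; hence $(\tilde v-\alpha-\alpha_i)+\alpha_i=\tilde v-\alpha$ is a root of $Q^\infty$.

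The hard part is the base case, namely $\tilde v-\beta$ is a positive root of $Q^\infty$. I would first observe that the hypotheses $v_i\ge 1$ (convention) and $w\neq 0$ force $(\tilde v,\tilde v)\le 0$: a direct computation gives $(\tilde v,\tilde v)=2-\sum_j v_jw_j-\langle\nu,v\rangle$, and both $\sum_j v_jw_j\ge 1$ and $\langle\nu,v\rangle\ge 0$, with equality in the first only if $w=0$; since $\tilde v$ is a root (from nonemptiness of $\M_0^\theta(v,w)$) and $(\tilde v,\tilde v)=1$ is not a root norm, we conclude $(\tilde v,\tilde v)\le 0$, so $\tilde v$ is imaginary. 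Then $(\tilde v-\beta,\tilde v-\beta)=(\tilde v,\tilde v)+2\le 2$, which at least rules out the obstruction of wrong norm. To promote this to being a root, I would use Nakajima's identification of roots of $Q^\infty$ with positive-multiplicity weights of the integrable highest-weight $\g_Q$-module $L(\Lambda_w)$: it suffices to show $\nu+\beta$ is a weight of $L(\Lambda_w)$. Since $\langle\nu,\beta\rangle=0$ and $\beta$ is a positive real root, the $\beta$-string through $\nu$ in $L(\Lambda_w)$ is symmetric; thus $\nu+\beta$ is a weight iff $\nu-\beta$ is. Finally I would use the maximality of $\beta$: since $\beta$ is the highest root of a connected component of the sub-Dynkin diagram $J$, the restriction $L(\Lambda_w)\vert_{\g_J}$ contains $\nu$ in its zero weight space, and a standard $\mathfrak{sl}_2$ argument applied to the $\beta$-triple produces the required weight $\nu-\beta$.

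The main obstacle is precisely this final representation-theoretic input at the base case; everything else is routine string/chain bookkeeping. In practice I expect the cleanest implementation to combine the $\mathfrak{sl}_2$-argument with Kac's characterization of imaginary roots (using that $\tilde v-\beta$ has non-negative entries, which itself needs $v\ge\beta^{(Q)}$ componentwise---a fact that should follow from the same maximality/weight-theoretic analysis).
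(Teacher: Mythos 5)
Your reduction of the general case to the base case $\alpha=\beta$ is sound and essentially reproduces the paper's Step 3: the chain inside the finite $ADE$ subsystem supported on $J=\{i\in Q_0:\langle\nu,\alpha_i\rangle=0\}$, together with $(\tilde v,\alpha_i)=0$ for $i\in J$, transports ``$\tilde v-\beta$ is a root'' to ``$\tilde v-\alpha$ is a root'' by simple reflections. The translation of the base case into ``$\nu+\beta$ (equivalently, by string symmetry since $\langle\nu,\beta\rangle=0$, $\nu-\beta$) is a weight of $L(\Lambda_w)$'', via the dictionary between roots of $Q^\infty$ with $\infty$-coefficient $1$ and weights of $L(\Lambda_w)$, is also legitimate, though that dictionary is an external input the paper never uses and would need a precise reference.

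The genuine gap is the base case itself, which you flag but do not close. Knowing only that $\nu$ is a dominant weight of $L(\Lambda_w)$, that $\langle\nu,\beta\rangle=0$, and that $\beta$ is maximal among positive roots orthogonal to $\nu$ does not allow any ``standard $\mathfrak{sl}_2$ argument'' to produce the weight $\nu-\beta$: the $\mathfrak{sl}_2^{(\beta)}$- (or $\g_{\Supp(\beta)}$-) module through the $\nu$-weight space can be trivial. Concretely, take $Q=A_2$, $w=(1,0)$, $v=(0,0)$: then $\nu=\varpi_1$ is dominant, $\beta=\alpha_2$ is the maximal positive root orthogonal to $\nu$, yet $\nu-\beta=\varpi_1-\alpha_2$ is not a weight of $L(\varpi_1)$ and $\tilde v-\beta=\alpha_\infty-\alpha_2$ is not a root. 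So the statement you need is false without the standing convention $v_i\ge 1$ for all $i$ together with $w\neq 0$, and your sketch never invokes these hypotheses; the clause ``should follow from the same maximality/weight-theoretic analysis'' is exactly the missing proof. This is where all of the paper's work lies: its Step 1 shows $\Supp(\tilde v-\beta)$ is connected and Step 2 shows $\tilde v-\beta$ is a root via Kac's criterion and reflections at the vertex $\infty$, both through case analyses that use full support of $v$, $w\neq 0$, maximality of $\beta$, and inspection of the affine Dynkin tables; an argument along your lines would have to supply an equivalent amount of input (e.g.\ proving $v\ge\beta$ componentwise and that the relevant $\g_{\Supp(\beta)}$-constituent is nontrivial). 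A smaller point: the lemma does not assume $\M_0^\theta(v,w)\neq\emptyset$, so you cannot quote nonemptiness to assert $\tilde v$ is a root; your norm estimate can instead be closed by parity ($(\tilde v,\tilde v)$ is even, hence $\le 0$), but imaginarity of $\tilde v$ then still requires Kac's criterion rather than ``a root of nonpositive norm''.
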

		The above statement does not hold for wild quivers. For example, consider the following quiver with dimension vector and framing indicated by the numbers. 
		\begin{center}
			\begin{tikzpicture}[scale=0.9]
				\filldraw[black] (0,0) circle (1.5pt);
				\filldraw[black] (1,0) circle (1.5pt);
				\filldraw[black] (2,0) circle (1.5pt);
				\node at (2,0) (alpha3) {$\ $};
				\draw[-,thick] (1.1,0)--(1.9,0);
				\draw[-,thick] (2,0.1)--(2,0.9);
				\draw[-,thick] (0.1,0)--(0.9,0);
				\draw [-,thick] (2.1,-0.15)arc(-160:160:0.5 and 0.3);
				\draw [-,thick] (-0.1,-0.15)arc(-20:-340:0.5 and 0.3);
				\node at (0,-0.4) {$1$};
				\node at (1,-0.4) {$1$};
				\node at (2,-0.4) {$1$};
				\node at (1,0.4) {$\alpha$};
				\node at (2,1.3) [rectangle,draw]  {2};
			\end{tikzpicture}
		\end{center}
		The only simple real root $\alpha$, associated to the vertex without an edge loop, satisfies $\<\nu,\alpha\> = 0$, but $\tilde{v} - \alpha$ does not have connected support and cannot be a root. 
		\begin{corollary}[{\cite[Theorem 5.4]{mcgerty2019springer}}]{\label{MN results recovered}}
			Suppose $Q$ is a Dynkin quiver and $v,w$ are dimension vectors and framing such that $\nu$ is dominant. Let $W_\nu$ denote the Weyl group of the sub-root system $\Phi_\nu$. Then the Namikawa-Weyl group of $ \M_0^0 (v,w) $ is $W_\nu$. 
		\end{corollary}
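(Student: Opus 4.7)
The plan. Since $Q$ is a Dynkin quiver, it has no imaginary roots, so by \Cref{theorem cod 2 leaves in deformed quiver varietyes} every codimension 2 root of $Q$ is of type (1): a positive real root $v^1$ satisfying $\langle \nu, v^1 \rangle = 0$ and $\tilde{v} - v^1 \in \Phi_{Q^\infty}$. My strategy is first to identify the codimension 2 roots with $\Phi_\nu^+$, then to use the relation-preserving bijection of \Cref{corollary relation of cod 2 roots and relation of Weyl group roots} to recognise the Namikawa-Weyl group as $W_\nu$.

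\textbf{Step 1: identifying codimension 2 roots with $\Phi_\nu^+$.} That every codimension 2 root lies in $\Phi_\nu^+$ is built into the definition. For the converse, fix $\alpha \in \Phi_\nu^+$ and choose a maximal element $\beta \in \Phi_\nu^+$ with $\alpha \le \beta$. The (stated) \Cref{root combi lemma} then certifies $\tilde{v}-\alpha\in\Phi_{Q^\infty}$, so $\alpha$ is a codimension 2 root. Through \Cref{kappa induce bijection between hyperplanes} this matches the root hyperplanes of $\cartanh_X$ bijectively with the root hyperplanes of $\Phi_\nu$.

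\textbf{Step 2: simply-laced factors and additive relations.} The proposition just preceding this corollary shows that for Dynkin $Q$ the Namikawa-Weyl group decomposes as a direct product of ADE Weyl groups. Hence every slice Kleinian singularity is simply laced and each diagram automorphism $\sigma$ that appears is trivial, so $m(\alpha) = 1$ uniformly in \Cref{corollary relation of cod 2 roots and relation of Weyl group roots}. Consequently a relation $v^i + v^j = v^k$ among codimension 2 roots of $Q$ corresponds exactly to a relation $\alpha_i + \alpha_j = \alpha_k$ among positive roots of the root system $\Phi_W$ of the Namikawa-Weyl group $W = \prod_i W_i$.

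\textbf{Step 3: matching root systems.} Combining Steps 1 and 2 one gets a bijection $\Phi_\nu^+ \leftrightarrow \Phi_W^+$ (implemented by $\kappa$) that preserves every linear relation of the form $\alpha + \beta = \gamma$. Since both root systems are simply laced, all roots have the same length, and the Cartan pairing is recovered from additive data: $(\alpha,\beta) = -1$ iff $\alpha + \beta$ is a root, $(\alpha,\beta) = 1$ iff $\alpha - \beta$ is a nonzero root, and $(\alpha,\beta) = 0$ otherwise (for $\alpha \ne \pm\beta$). The bijection therefore upgrades to an isomorphism of abstract root systems, and hence $W \cong W_\nu$, completing the proof.

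The main obstacle is \Cref{root combi lemma}, which guarantees the surjectivity part of Step 1. Its proof is deferred in the excerpt, and I expect to argue it by induction on the height of $\beta - \alpha$ using simple reflections that fix $\nu$ (these lie in $W_\nu$ and act on the set of roots $\le\beta$), reducing to the case $\alpha = \beta$. For this base case one exploits the dominance of $\nu$ and $w \ne 0$ to exhibit $\tilde{v} - \beta$ as a nonnegative integer combination of simple roots of $Q^\infty$ with connected support, the maximality of $\beta$ preventing cancellation from pushing any coefficient negative.
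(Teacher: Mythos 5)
Your argument is essentially the paper's own: since $Q$ is Dynkin only type (1) codimension 2 roots occur, \Cref{root combi lemma} shows every positive root of $\Phi_\nu$ is such a root, and \Cref{corollary relation of cod 2 roots and relation of Weyl group roots} with all $m(\alpha)=1$ transfers the additive relations so that the Namikawa-Weyl group is $W_\nu$ — the paper merely organizes this per maximal root $\beta\in\Phi_\nu^{\max}$ (getting $W=\prod_\beta W_\beta=\Stab_{W_Q}(\nu)$) instead of via your global "additive relations determine a simply-laced root system" step, and your sketched induction for \Cref{root combi lemma} matches the paper's Step 3 reduction by reflections fixing $\tilde v$. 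The one ingredient you omit, which the paper invokes first, is the identification $\M_0^0(v,w)\cong\M_0(v,w)$ (\Cref{for dominant nu and finite or affine quiver affinization = affine}); it is needed because the codimension-2-root machinery ($\kappa$, \Cref{theorem cod 2 leaves in deformed quiver varietyes}, \Cref{kappa induce bijection between hyperplanes}) computes the Namikawa-Weyl group of the affinization $\M_0(v,w)$, not of $\M_0^0(v,w)$ directly, so that citation should be added to make your proof of the stated corollary complete.
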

		
		Here, we do not a priori assume \Cref{best convention on tilde v}; instead, we deduce it from our weaker assumptions. 
		\begin{proof}
			Recall by \Cref{for dominant nu and finite or affine quiver affinization = affine} that $\mu$ is flat and $ \M_0^0 (v,w) \cong \M_0 (v,w) $. 	
			Suppose, for a contradiction, that $(\tilde{v},\alpha_\infty) >0$. We have seen in the proof of \Cref{affinization is an aff quiv variety by Maffei} that $(\alpha_\infty,v) = -1$ in this case, and $v$ is a (real) root supported on $Q$. Then $(\tilde{v}, v) = 1$, contradicting the assumption that $\nu$ is dominant.
			Therefore, $(\tilde{v},\alpha_i) \le 0$ for all $i\in Q_0^\infty$, i.e. \Cref{best convention on tilde v} is satisfied. 
			
			Only type (1) codimension 2 roots can appear. Let $\beta$ be a maximal root of $Q$ with $\<\nu,\beta\> = 0$. 
			By \Cref{root combi lemma}, all positive roots $v^1\le \beta$ satisfies conditions of a type (1) codimension 2 root.  
			The linear relations between these codimension 2 roots are exactly the linear relations between the sub root system generated by $\Supp(\beta)$. 
			Let $W_\beta$ denote the Weyl group of this root system. 
			Repeat for all such maximal $\beta$, we see the Weyl group $W = \prod_{\beta\in \Phi_\nu^{\max}} W_\beta$, where $\Phi_\nu^{\max}$ denotes the set of maximal positive roots of $Q$ vanishing at $\nu$. 
			Equivalently, $W = W_\nu$. 
		\end{proof}
		
		We can generalize the above result to affine type quivers. 
		
		Let $Q$ be affine type. By the construction of \Cref{affinization is an aff quiv variety by Maffei} and the canonical decomposition \Cref{CB canonical decomposition}, we see that for any $v,w$, we can reduce the computation of the Namikawa-Weyl group of $\M_0(v,w)$ to that of 
		\begin{enumerate}
			\item $\M^0_0(Q',v',w')$, where $Q'$ is a finite type quiver and $(v')^\sim \in \Sigma_0$, or
			\item $\M_0^0(Q,v',w')$, so that $\tilde{v}\in\Sigma_0$. 
		\end{enumerate} 
		Therefore, we may assume \Cref{best convention on tilde v}. Let $\delta$ be the minimal positive imaginary root of $Q$. Let $W_{Q_{fin}}$ denote the Weyl group of $Q_{fin}$, the Dynkin quiver corresponding to $Q$.
		Let $(\Phi_\nu^{\max})'$ be the set of maximal positive real roots $\beta$ of $Q$ such that $\<\nu,\beta\> = 0$ and $\Supp(\beta)\subsetneq Q_0$. Let $W_\beta$ denote the Weyl group corresponding to ${\Supp(\beta)}$ which is a Dynkin diagram. 
		Write $W$ be the Namikawa-Weyl group of $\M_0^0(v,w)$. 
		
		\begin{corollary}{\label{extend mn result to affine type}}
			The group $W$ takes the following form. 
			\begin{enumerate}
				\item If $\<\nu,\delta \> = 2$ and $\tilde{v} - \delta$ is a root, then $W = \prod_{\beta\in (\Phi_\nu^{\max})'}W_\beta \times \ZZ/2\ZZ$.
				\item Otherwise, $W = \prod_{\beta\in (\Phi_\nu^{\max})'}W_\beta$ 
			\end{enumerate}
			%
			
		\end{corollary}
		\begin{proof}
			
			%
			We first show $(\tilde{v},\delta)\le -2$. In fact, suppose $(\tilde{v},\delta) \ge -1$. Then since $(\alpha_i,\delta) = 0$ for all $i\in Q_0$, we conclude $(\alpha_\infty,v) \ge -1 $. But then $(\alpha_\infty,\tilde{v}) \ge 1$, contradicting \Cref{best convention on tilde v}. 
			
			Only type (1) and (2) codimension 2 roots can appear, and a type (2) root must be $\delta$. 
			Suppose $v^1$ is type (1). Then since $\nu$ is dominant, $\<\nu,\delta\> \ge 2$ and $\<\nu,v^1 \> = 0$, we have $\Supp(v^1) \subsetneq Q_0$, i.e. the support is Dynkin. Let $\beta$ be maximal among such roots. Then by \Cref{root combi lemma}, every real root $\alpha \le \beta$ is a type (1) codimension 2 root. These real roots give sub-root systems supported on $\Supp\beta$ and corresponding Weyl groups $W_{\beta}$. 
			
			Suppose $v^1 = \delta$ is a type 2 codimension 2 root, which implies $\tilde{v} - \delta$ is a root and $(\tilde{v},\delta) = -2$. By \Cref{all possible lin comb}, $\delta$ is not a linear combination of other (real) codimension 2 roots. Therefore, it contributes a $\ZZ/2\ZZ$ component in the Namikawa-Weyl group. 
			%
			%
		\end{proof}
		Let us finish by examining a related problem, i.e. the Namikawa-Weyl group of symplectic quotient singularities, see \cite{bellamy2020birational}. 
		\begin{example}\label{BC 20 quotient singularity result}
			Let $Q$ be affine type, $v = m\delta$ and $w$ be 1 over the extended vertex and $0$ elsewhere.
			Recall $\mu$ is flat by \Cref{for dominant nu and finite or affine quiver affinization = affine}, so $\M_0(v,w) = \M_0^0(v,w)$. 
			It is classical that $\M_0^0(v,w) $ is the symplectic quotient singularity $\CC^{2m}/(\Gamma\rtimes \mathfrak{S}_m)$, where $\Gamma$ is the finite subgroup of $\SL(2,\CC)$ corresponding to $Q$ under McKay correspondence and $\mathfrak{S}_m$ is the symmetric group; see e.g. \cite[Section 1]{bellamy2020birational}.
			
			In this situation, \Cref{best convention on tilde v} is not satisfied. Nevertheless, it is easy to write down the canonical decomposition of $\M_0^0(v,w)$: 
			\[\M_0^0(v,w) = S^m\M_0^0(\delta)\times pt.\]
			Clearly, $\M_0^0(v,w)$ has 2 codimension 2 leaves. Let $\alpha_0,\cdots,\alpha_n$ be the simple real roots of $Q$ and write $\delta = \sum_{i=0}^n c_i\alpha_i$. Then, the two codimension 2 leaves correspond to representation types 
			\[\tau_1 = (\alpha_\infty,1; \alpha_0,c_0;\cdots;\alpha_n,c_n; \delta,m-1)\]
			and 
			\[\tau_2 = (\alpha_\infty,1; \delta,1; \delta,m-1).\]
			The first leaf gives a Namikawa-Weyl group component of type $W_{Q_{fin}}$, and the second leaf gives a $\ZZ/2\ZZ$. Therefore, the Namikawa-Weyl group of $\CC^{2m}/(\Gamma\rtimes \mathfrak{S}_m)$ is $W_{Q_{fin}} \times \ZZ/2\ZZ$. This is exactly 
			\cite[Proposition 2.2]{bellamy2020birational}.
		\end{example}
		
		We now prove \Cref{root combi lemma}.
		\begin{proof}[Proof of \Cref{root combi lemma}]\label{proof of root combi lemma}
			Suppose $\beta$ is a maximal element among 
			\[\{\beta|\<\nu,\beta\> = 0, \beta \text{ is a positive root}\}.
			\]
			First we show that $\beta$ must be a real root. In fact, if $\beta$ is imaginary (for affine type $Q$), then $(\beta,v) = 0$, so $\<\Lambda_w, \beta\> = 0$. But $\Supp(\beta) = Q_0$ and $w\neq 0$, contradiction. Note that if $\beta = \sum_i m_i\alpha_i$ where $\alpha_i$ are simple roots and $m_i\in \ZZ_{>0}$, then $(\tilde{v},\alpha_i) = 0$ since $\nu$ is dominant. Note also that, if $i\in\Supp\beta$, then $(\alpha_i,\beta) \ge 0$; otherwise $s_i(\beta) >\beta$ and $(\tilde{v}, s_i\beta) =0$, contradicting the maximality of $\beta$. 
			
			\textit{Step 1.} We show $\tilde{v}- \beta$ has connected support. 
			
			If not, let $v_1,v_2,...,v_r$ be the restrictions of $\tilde{v}- \beta$ to the connected components of $\Supp(\tilde{v}- \beta)$; that is, $\sum_{i=1}^r v_i = \tilde{v}- \beta$, and $\Supp(\tilde{v}- \beta) =  \bigsqcup _{i=1}^r \Supp(v_i)$. 
			Exactly one of $\Supp(v_i)$, say $\Supp(v_1)$, contains the vertex $\infty$. For $i\ge 2$, $\Supp(v_i)$ is a proper subdiagram of $Q$ (since it is not connected to $\infty$), and is therefore a Dynkin diagram. 
			
			Let $i\in \Supp(v_2)$, $\alpha_i$ be the simple root attached to $i$, and consider $(\tilde{v}-\beta,\alpha_i) = (v_2,\alpha_i)$; the equality holds since $\Supp(v_2)$ is a connected component of $\Supp(\tilde{v}-\beta)$. We have the following possibilities for $i$. 
			
			\begin{enumerate}
				\item $i\in\Supp(\beta)$. 	
				
				In this case $(\tilde{v},\alpha_i)= 0$ and $(v_2,\alpha_i) = (\tilde{v}-\beta,\alpha_i) = -(\beta,\alpha_i) \le 0$ by maximality of $\beta$. 
				
				\item $i\not \in \Supp(\beta)$ and is not connected to $\Supp(\beta)$ by an edge.
				
				Then $(\alpha_i,\beta) = 0$, and $(v_2,\alpha_i) =(\tilde{v}-\beta,\alpha_i) = (\tilde{v},\alpha_i) \le 0$ since $\nu$ is dominant. 
				\item  $i$ is connected to $\Supp(\beta)$ by an edge and $(\alpha_i,\beta) = -1$. 
				
				By maximality of $\beta$, we know that $(\tilde{v},s_i\beta) =(\tilde{v},\beta + \alpha_i) < 0$; therefore $(\tilde{v},\alpha_i)\le -1$. Then $(v_2,\alpha_i) = (\tilde{v}-\beta,\alpha_i)  \le 0$. 
				
				\item $i$ is connected to $\Supp(\beta)$ by an edge, and $(\alpha_i,\beta) \le -2$.
				
				We still have $(\tilde{v},\alpha_i)\le -1$.
				Note that both $\beta$ and $s_i\beta = \beta - (\alpha_i,\beta)\alpha_i$ are roots, so $\beta+\alpha_i$ is also a root. 
				Now consider $(\alpha_i+\beta,\alpha_i+\beta) = 4+2(\alpha_i,\beta)$.  
				Since $Q$ is of Dynkin or affine type, we must have $(\alpha_i,\beta) = -2$. In this case, if $Q$ is affine type, and $\alpha_i+\beta = \delta$, the minimal positive imaginary root. 
			\end{enumerate}
			
			Suppose for all $i \in \Supp(v_2)$, $i$ falls in case (1), (2) or (3) above. Then $(v_2,\alpha_i) \le 0$ for all $i\in \Supp(v_2)$. This contradicts that $\Supp(v_2)$ is a Dynkin diagram. 
			
			Therefore, let $i \in \Supp(v_2)$ fall in case (4) and $(v_2,\alpha_i) = (\tilde{v}-\beta,\alpha_i) >0$. Since $(\tilde{v},\alpha_i)\le -1$ and $(\beta,\alpha_i) = -2$, we must have $(\tilde{v}-\beta,\alpha_i) =(v_2,\alpha_i) =1$. 
			Therefore, for all $i\neq i'\in \Supp(v_2)$, $(v_2,\alpha_{i'}) = (\tilde{v}-\beta,\alpha_{i'}) = (\tilde{v},\alpha_{i'}) - (\beta,\alpha_{i'}) = (\tilde{v},\alpha_{i'}) - (\delta - \alpha_i,\alpha_{i'}) = (\tilde{v},\alpha_{i'}) + (\alpha_i,\alpha_{i'}) \le 0$. 
			
			We conclude that $\Supp(v_2)$ is a proper subdiagram of $Q$ that contains an extended vertex. By checking the position of extended vertices of all affine type quivers, we see $\Supp(v_2)$ must be of type $A_n$, $D_n$, $E_6$ or $E_7$. In terms of \cite[Table 2]{onishchik2012lie} (and compare to \cite[Table 6]{onishchik2012lie}), $i$ may correspond to 
			\begin{enumerate}[a.]
				\item any column of the Cartan matrix if $\Supp(v_2)$ is type $A_n$; 
				\item the first column or the last two columns of the Cartan matrix if $\Supp(v_2)$ is type $D_n$;
				\item the first or the fifth column of the Cartan matrix if $\Supp(v_2)$ is type $E_6$;
				\item the first column of the Cartan matrix if $\Supp(v_2)$ is type $E_7$.
			\end{enumerate}
			By inspection, we see there is no positive integral vector satisfying $(v_2,\alpha_i) = 1, (v_2,\alpha_j) \le 0$ for all $j\neq i$. Therefore, the component $v_2$ of $v-\beta$ cannot exist, and we conclude $\tilde{v}-\beta$ has connect support. 
			
			\textit{Step 2.} 
			We show $\tilde{v}-\beta$ is a root. We have the following cases.
			
			\begin{enumerate}[I.]
				\item For all $i\in\Supp({v}-\beta)$, $(\tilde{v} - \beta, \alpha_i)\le 0$. There are 2 subcases. 
				\begin{enumerate}[i.]
					\item $(\tilde{v} - \beta, \alpha_\infty)\le 0$. In this case, $(\tilde{v} - \beta, \alpha_j)\le 0$ for all $j\in Q^\infty_0$, and $\tilde{v} - \beta$ is an imaginary root. 
					\item $(\tilde{v} - \beta, \alpha_\infty) \ge 1$. In this case, note that $(\tilde{v} - \beta, \alpha_\infty) = (\alpha_\infty +v-\beta ,\alpha_\infty) = 2 + (v-\beta ,\alpha_\infty) \le 1$, so $(\tilde{v} - \beta, \alpha_\infty) =1$. Thus, there is a unique edge connecting the vertex $\infty$ to some other vertex in $\Supp(\tilde{v}-\beta)$; we call this vertex $i_1$. The multiplicity of $\alpha_{i_1}$ in $\tilde{v}-\beta$ is 1. See \Cref{sketch for final lemma} for a sketch; there the numbers below the vertices are the multiplicities of the corresponding simple roots in $\tilde{v}-\beta$. 
					\begin{figure}[h]
						\centering
						\begin{tikzpicture}[scale=1]
							\node at (4.3,0) [circle,draw]  {else};
							\filldraw[black] (3,0) circle (1.5pt);
							\filldraw[black] (2,0) circle (1.5pt);
							\draw[-,thick] (2.1,0)--(2.9,0);
							\draw[-,thick] (3.1,0)--(3.9,0);
							\node at (1,0) {$\tilde{v}-\beta=$};
							\node at (2,0.3) {$\infty$};
							\node at (2,-0.3) {$1$};
							\node at (3,0.3) {${i_1}$};
							\node at (3,-0.3) {$1$};
						\end{tikzpicture} 
						\caption{ }
						\label{sketch for final lemma}
					\end{figure}
					
					Therefore, $(\tilde{v}-\beta,\alpha_{i_1}) \le 1$.
					
					If $(\tilde{v}-\beta,\alpha_{i_1}) \le -1$, then $(s_\infty(\tilde{v}-\beta),\alpha_j) = (\tilde{v}-\beta-\alpha_\infty,\alpha_j) \le 0$ for all $j\in Q_0^\infty$, and we get an imaginary root.
					
					If $(\tilde{v}-\beta,\alpha_{i_1}) =1$ then $\tilde{v}-\beta = \alpha_1+ \alpha_\infty$ is a real root. 
					
					Finally, if $(\tilde{v}-\beta,\alpha_{i_1}) =0$ then there is a unique vertex $i_2\in \Supp(\tilde{v}-\beta)$ such that $i_1$ is connected only to $\infty$ and $i_2$ by a single edge, and the multiplicity of $\alpha_{i_2}$ in $\tilde{v}-\beta$ is 1. In \Cref{sketch for final lemma}, $i_2$ belongs to the ``else" part. Apply the previous arguments to $s_\infty(\tilde{v}-\beta) = v-\beta$ (now $\alpha_{i_1}$ plays the role of $\alpha_\infty$ before). After finitely many iterations we see $s_{i_k}s_{i_{k-1}}\ldots s_\infty(\tilde{v}-\beta)$ is a root for a sequence of vertices $i_k$, and therefore $\tilde{v}-\beta$ is a root. 
				\end{enumerate}
				\item For some $i\in \Supp({v}-\beta)$, $(\tilde{v} - \beta, \alpha_i)\ge 1$. Since $(\tilde{v},\alpha_i) \le 0$, we have $(\alpha_i, \beta) \le -1$. Then the vertex $i$ is connected to $\beta$; so by maximality of $\beta$ we have $(\tilde{v},\alpha_i) \le -1$, so $(\alpha_i, \beta) \le -2$. In other words, $i$ must fall in case (4) in Step 1. In particular, $Q$ is affine type, $i\not\in \Supp(\beta)$, $(\tilde{v},\alpha_i) = -1$ and $\delta = \alpha_i+\beta$.
				Therefore 
				\[ (\tilde{v},\delta) =(\tilde{v},\alpha_i+\beta) =  (\alpha_\infty,\delta)=-1,\]
				so the framing $w$ has the form $w_{i'} =1$ for some $i'$ such that multiplicity of $\alpha_{i'}$ in $\delta$ is $1$, and $w_{j} =0$ for all other $j$. 
				
				We claim $i'=i$. Otherwise, $ i'\in \Supp(\beta)$, and $(v,\alpha_{i'}) = -(\alpha_\infty,\alpha_{i'}) = 1$. 
				Also, $(v,\alpha_i) = (\tilde{v},\alpha_i) = -1$, and $(v,\alpha_j) = 0$ for $Q_0\ni j\neq i,i'$. 
				The set for all such $v = (v_k)|_{k\in Q_0}$, i.e. the integral solution set of the linear system
				\[\left\{ 
				\begin{array}{ccc}
					(v,\alpha_{i'}) &= 1 & \\
					(v,\alpha_i) &= -1 & \\
					(v,\alpha_j) &= 0 & \text{for all $j\in Q_0, j\neq i,i'$}
				\end{array}
				\right.\tag{$\star$}\] 
				is the set of integral elements in the affine line $v'+\CC\delta$, where $v'$ is any special solution. 
				
				To find a special solution $v'=(v'_k)|_{k\in Q_0}$, we may let $v'_i = 0$. Then $\Supp(v') \subset \Supp(\beta)$, a Dynkin diagram, and the problem is reduced to solving the system 
				\[\left\{ 
				\begin{array}{ccc}
					v_i &= 0 &\\
					(v,\alpha_{i'}) &= 1 & \\
					(v,\alpha_j) &= 0 & \text{for all $j\in \Supp(\beta), j\neq i'$}
				\end{array}
				\right.\tag{$\star'$}\] 
				which has a unique solution. 
				Recall that the multiplicity of $\alpha_{i'}$ in $\delta$ is 1. In terms of \cite[Table 2]{onishchik2012lie},  $i'$ may correspond to any column of the Cartan matrix if $Q$ is type $A_n$; 
				$i'$ may correspond to the first or last 2 columns of the Cartan matrix if $Q$ is type $D_n$;
				$i'$ may correspond to the first or the fifth column of the Cartan matrix if $Q$ is type $E_6$;
				$i'$ may correspond to the first column of the Cartan matrix if $Q$ is type $E_7$;
				and $Q$ cannot be type $E_8$. 		
				By inspection, we see $v'$ cannot be integral in any case. 		
				But the multiplicity of $\alpha_i$ in $\delta$ is 1, and therefore an integral solution for $v$ has the form $v'+n\delta$. We conclude there is no integral solution for $v$. 
				
				Therefore $w_i=1, w_j = 0$ for $j\neq 0$. Then for $j\in\Supp(\beta)$, $(v,\alpha_j) = (\tilde{v},\alpha_j) = 0$; and $(v,\alpha_i) = (\tilde{v},\alpha_i) + 1 = 0$. Hence $v = n\delta$ for some positive integer $n$, and it is clear that $\tilde{v}-\beta$ is a real root. 
			\end{enumerate}
			
			This finishes \textit{Step 2} of the proof. 	
			
			\textit{Step 3.} We show that for any positive real root $\alpha\le \beta$, $\tilde{v} - \alpha$ is a root. 
			
			In fact, there is a sequence of simple reflections $s_{i_1},...,s_{i_m}$ where each $i_k\in\Supp(\beta)$ such that $\alpha = s_{i_1}s_{i_2}...s_{i_m}\beta$. Since $(\tilde{v},\alpha_{i_k}) = 0$, we have $s_{i_1}s_{i_2}...s_{i_m}(\tilde{v}-\beta) = \tilde{v} - \alpha$. Hence $\tilde{v} - \alpha$ is also a root. This finishes the proof of the lemma. \qedhere 
		\end{proof}

\bibliographystyle{amsplain}

%
		
	\end{document}